\UseRawInputEncoding
\documentclass[reqno, 11pt]{article} 
\usepackage{amsthm,amsmath,amsfonts,amssymb,mathtools,bm,framed,fixltx2e}
\usepackage{bbm}
\usepackage{mathbbol}
\usepackage{mathrsfs}
\usepackage{amscd}
\usepackage{enumerate}
\usepackage{lineno}

\usepackage[top=4cm, bottom=3cm, left=3.2cm, right=3.2cm]{geometry}

\newtheorem{thm}{Theorem}
\newtheorem{cor}{Corollary}
\newtheorem{lem}{Lemma}
\newtheorem{prop}{Proposition}
\theoremstyle{definition}
\newtheorem{defn}{Definition}
\theoremstyle{remark}
\newtheorem{rem}{Remark}
\theoremstyle{Hypothesis}

\usepackage{color}
\usepackage{ulem}

\numberwithin{equation}{section} \numberwithin{lem}{section}
\numberwithin{thm}{section} \numberwithin{prop}{section}
\numberwithin{cor}{section} \numberwithin{rem}{section}\numberwithin{hyp}{section}

\title
{Propagation of chaos for the parabolic-parabolic Keller-Segel model with a logarithmic cut-off}

\author{ Li {\sc Chen}$^2$
\and Shu Wang$^1$
\and Rong  Yang$^1$
}

\begin{document}
\maketitle
\begin{center}
{\footnotesize
1-College of Applied Sciences, Beijing University of Technology, Ping Le Yuan 100, Chaoyang District, Beijing, 100124, People's Republic of China\\
 \smallskip
2-University of Mannheim, Department of Mathematics,\\ 68131, Mannheim, Germany
}
\end{center}
\maketitle
\date{}

\begin{abstract}
In this paper, we study propagation of chaos for the parabolic-parabolic Keller-Segel model with a logarithmic cut-off by establishing a rigorous  convergence analysis from a stochastic particle system to the parabolic-parabolic Keller-Segel (KS) equation for any dimension case. Under the assumption that the initial data are independent and identically distributed (i.i.d.) with a common probability density function
$\rho_0$, we rigorously prove the propagation of chaos for this interacting system with a cut-off parameter $\varepsilon\sim (\ln N)^{-\frac{2}{d+2}}$: when $N\rightarrow \infty$, the joint distribution of the particle system is $f$-chaotic and the measure $f$ possesses a density  which is a weak solution to the mean-field parabolic-parabolic KS equation.

\end{abstract}
{\small {\bf Keywords:}
 Interacting Brownian particle system, propagation of chaos, chemotaxis, Wasserstein metric.}

\section{Introduction}
In this paper,  we study a convergence of particle system to the standard parabolic-parabolic KS system for any  $d\geq 1$ dimension:
\begin{eqnarray}\label{diffaggKSeq}
\left\{\begin{aligned}{}
&\partial_t\rho-\triangle \rho +\nabla\cdot(\rho\nabla c)=0, &&x \in \mathbb{R}^{d}, ~t\geq0, \\
&\partial_t c-\triangle c+\lambda c=\rho, && x \in \mathbb{R}^{d}, ~t\geq0,\\
&\rho(x,0)=\rho_0(x), \hspace{0.5cm} c(x,0)=c_0(x),&& x \in \mathbb{R}^{d},
\end{aligned}\right.
\end{eqnarray}
where $\rho(x,t)$ represents the density of bacteria and $c(x,t)$ represents the chemical substance concentration. The constant $\lambda\geq0$ and the initial data $\rho_0(x)$ is given by a non-negative density. We refer Corrias et al. \cite{CEM2014}, Perthame \cite{P2004} and references therein
for theoretical results on this system of PDEs and applications to biology. The parabolic-parabolic KS system \eqref{diffaggKSeq} has been obtained as a diffusion limit from a kinetic equation
by Erban et al. \cite{EO2007, OH2002}. Mean--field limit of a particle approximation of the one-dimensional parabolic--parabolic KS model without smoothing has been obtained in \cite{JTT2017}. Recently, the parabolic-parabolic KS system \eqref{diffaggKSeq} in two and three space dimensions has also been obtained as a vanishing cross-diffusion limit from a Keller-Segel system with additional cross-diffusion by Jungel et al.  \cite{JLW2020}.
But to our knowledge, no derivation as a mean-field
limit of a microscopic cell-system has been performed for $d\geq 2$.

The main purpose of this paper is to derive the parabolic-parabolic KS system \eqref{diffaggKSeq} from  a stochastic particle system. Let $\big(\Omega, \mathcal{F},  \mathbb{P}\big)$ be a probability space endowed with $N$ independent
$d$-dimensional Brownian motions $\{{B_t^i}\}_{i=1}^N$. We will derive the system \eqref{diffaggKSeq} as a mean-field limit of the following stochastic particle system with regularized interaction
\begin{eqnarray}\label{interparODES}\nonumber
X_t^{i,\varepsilon}&=&X_0^i+\frac{1}{N}\sum\limits_{j=1}^N \int_0^{t}\int_0^{s-\varepsilon}  \frac{e^{-\frac{|X_s^{i,\varepsilon}-X_r^{j,\varepsilon}|^2}{4 (s-r)} +\lambda (r-s) } }   {\big(4\pi(s-r)\big)^{\frac{d}{2}}}  \frac{X_r^{j,\varepsilon}-X_s^{i,\varepsilon}}{2(s-r)}drds\\
&&+\int_0^{t}e^{-\lambda s}\int_{\mathbb{R}^{d}}\frac{e^{-\frac{|X_s^{i,\varepsilon}-y|^2}{4s} } }   {(4\pi s)^{\frac{d}{2}}}\nabla c_0(y)dyds+\sqrt{2}B_t^i, ~~\hspace{0.2cm} t\geq 0,\hspace{0.2cm} 1\leq i\leq N,
\end{eqnarray}
where the initial data $\{X^{i}_0\}^N_{i=1}$ are i.i.d. random variables with the common density $\rho_0$.  Here, the second term on the right hand integrates on the interval $(0, t-\varepsilon)$ of the system \eqref{interparODES} is defined by
\begin{eqnarray}
\int_0^{t-\varepsilon} g(s)ds=
\left\{\begin{array}{ll}
\int_0^{t-\varepsilon} g(s)ds & \varepsilon < t, \\
0 &0 \leq t \leq \varepsilon.
\end{array}\right.
\end{eqnarray}
In this paper we assume that the initial data $(\rho_0, c_0)$ satisfy
\begin{eqnarray}\label{inicon1}
& \|\rho_0\|_{L^\infty(\mathbb{R}^d)}<+\infty, \int_{\mathbb{R}^{d}}|x|\rho_0 dx<+\infty, \|c_0\|_{W^{2,3}(\mathbb{R}^d)}<+\infty,  \|c_0\|_{W^{2,\infty}(\mathbb{R}^d)}<+\infty;  &\\ \label{inicon3}
&\|\nabla\rho_0\|_{L^{p}(\mathbb{R}^{d})}+\|\rho_0\|_{C^{2+\alpha}(\mathbb{R}^{d})}+ \|c_0\|_{C^{2+\alpha}(\mathbb{R}^{d})}<+\infty \mbox{~for~some~} \alpha>0,  \forall~p\in[1, d+1),&\\ \label{inicon2}
&\left\{\begin{array}{ll}
M_0=\| \rho_0(\cdot) \|_{L^1}\leq C, & d=2, \\
\|\rho_0\|_{L^{a}(\mathbb{R}^d)}+ \|\nabla c_0\|_{L^d(\mathbb{R}^d)}\leq C \mbox{  for some } \frac{d}{2}<  a \leq d, & d\geq 3,
\end{array}\right.&
\end{eqnarray}
where C are some given constants and $M_0$ is given by one well-known important property of the parabolic-parabolic KS system \eqref{diffaggKSeq} that the total mass is conserved
$$\|\rho(\cdot,t)\|_{L^1} =\| \rho_0(\cdot) \|_{L^1} = M_0.$$

In the present work, we couple the parameters $N$ and $\varepsilon$ by taking a sequence $\varepsilon\rightarrow 0$ as $N\rightarrow\infty$. Our main result is stated by the following theorem.
\begin{thm}\label{meanfieldres} ({\bf propagation of chaos})
Suppose $\{{X}^i_0\}_{i=1}^N$ are i.i.d. random variables with the common density  $\rho_0$ and $(\rho_0, c_0)$ satisfy the conditions \eqref{inicon1}, \eqref{inicon2}, \eqref{inicon3}. For any fixed $T>0$, let $\{({X}_t^{i,\varepsilon})_{t\leq T}\}^N_{i=1}$ be the unique strong solution to \eqref{interparODES}  with the i.i.d. initial data $\{{X}^{i} _0\}^N_{i=1}$ and Brownian motions $\{{B}^{i} _t\}^N_{i=1}$. Denote by $F_t(x_1,\cdots,x_N)$ the joint marginal  distribution of $\big({X}^{1,\varepsilon}_t,\cdots,{X}^{N,\varepsilon}_t\big)_{t\leq T}$, $f_t^{(j),\varepsilon}=\int_{\mathbb{R}^{(N-j)d}}F_t(\cdot,dx_{j+1},\cdots,dx_N)$ be the $j$-th marginal distribution of $F_t(x_1,\cdots,x_N)$, and $f_t^{(j),\varepsilon}$ has a density function $\rho_t^{(j),\varepsilon}$. Then for any $j\geq1$, $T>0$, for cut-off parameters $\varepsilon(N)\sim (\ln N)^{-\frac{2}{d+2}}\rightarrow0$, it holds that as $N\rightarrow\infty$
\begin{eqnarray}\label{mean}
\int_{\mathbb{R}^{jd}}\varphi(x)\rho_t^{(j),\varepsilon}dx\rightarrow \int_{\mathbb{R}^d}\varphi(x)\rho_t^{\otimes j}dx, \quad \forall~\varphi(x)\in C_b(\mathbb{R}^{jd}), ~ 0\leq t\leq T,
\end{eqnarray}
where $\rho$  is the unique weak solution to \eqref{diffaggKSeq} with the initial data $(\rho_0, c_0)$ and $c=e^{-\lambda t} e^{t\Delta} c_0+ \int_0^{t} e^{\lambda(s-t)} e^{(t-s)\Delta}\rho^{(1)}_s ds$.
\end{thm}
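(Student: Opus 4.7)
The plan is to prove propagation of chaos via a synchronous coupling between the regularized particle system \eqref{interparODES} and a family of i.i.d.\ McKean--Vlasov processes driven by the same Brownian motions $\{B_t^i\}$. First I would verify that under the regularity \eqref{inicon1}--\eqref{inicon3} the KS system \eqref{diffaggKSeq} admits a unique classical solution $(\rho, c)$ on $[0,T]\times\mathbb{R}^d$ with $\rho\in L^\infty$ and $\nabla c$ bounded and uniformly Lipschitz in $x$. This $\nabla c$ defines the limiting dynamics
\begin{equation*}
d\bar{X}_t^i = \nabla c(\bar{X}_t^i, t)\, dt + \sqrt{2}\, dB_t^i, \qquad \bar{X}_0^i = X_0^i,
\end{equation*}
whose solutions are i.i.d.\ with common marginal $\rho_t$. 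Once a coupling estimate of the form $\sup_{t\leq T}\mathbb{E}|X_t^{i,\varepsilon} - \bar{X}_t^i| \to 0$ is established, \eqref{mean} follows for every fixed $j\geq 1$ from the exchangeability of both systems and the standard equivalence between vanishing $W_1$-distance of the empirical measure to $\rho_t$ and $f$-chaos.

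To carry out the coupling I would insert an auxiliary i.i.d.\ process $\tilde{X}_t^{i,\varepsilon}$ satisfying the same SDE as $\bar{X}_t^i$ but with drift replaced by the time-truncated mean-field
\begin{equation*}
\nabla\tilde{c}^{\,\varepsilon}(s,x) = \int_0^{(s-\varepsilon)_+} e^{\lambda(r-s)}\, \nabla_x e^{(s-r)\Delta}\rho_r(x)\, dr + e^{-\lambda s}\, \nabla_x e^{s\Delta}c_0(x),
\end{equation*}
which exactly mirrors the structure of the particle drift. The coupling error then splits as $|X^{i,\varepsilon}-\bar{X}^i| \leq |X^{i,\varepsilon}-\tilde{X}^{i,\varepsilon}| + |\tilde{X}^{i,\varepsilon} - \bar{X}^i|$. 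The second piece is handled by a Gronwall argument using the $\varepsilon$-consistency estimate $\|\nabla c - \nabla \tilde{c}^{\,\varepsilon}\|_{L^\infty_{t,x}} = O(\varepsilon^{\gamma})$ for some $\gamma>0$, itself a consequence of heat-kernel bounds together with the regularity of $\rho$ and $c_0$. For the first piece, after adding and subtracting $\nabla\tilde{c}^{\,\varepsilon}(s, X_s^{i,\varepsilon})$, the Lipschitz term is bounded by $\|\nabla^2\tilde{c}^{\,\varepsilon}\|_\infty\,|X^{i,\varepsilon}-\tilde{X}^{i,\varepsilon}|$ (which is $\varepsilon$-uniform), and the remainder is the fluctuation
\begin{equation*}
\frac{1}{N}\sum_{j=1}^{N}\int_0^{s-\varepsilon} K^{(s,X_s^{i,\varepsilon})}(r, X_r^{j,\varepsilon})\, dr \;-\; \mathbb{E}\!\left[\int_0^{s-\varepsilon} K^{(s,X_s^{i,\varepsilon})}(r, \bar{X}_r^{1})\, dr\right],
\end{equation*}
where $K^{(s,x)}(r,y) = e^{\lambda(r-s)}\nabla_x G(s-r,x-y)$.

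The main obstacle is controlling this law-of-large-numbers term, because $\nabla_x G(s-r,\cdot)$ blows up like $(s-r)^{-(d+1)/2}$ as $r\uparrow s-\varepsilon$, so the per-particle contribution has $L^\infty$ norm diverging as a negative power of $\varepsilon$. A Hoeffding/Bernstein-type concentration inequality exploiting the exchangeability of the system, combined with the standard coupling of $(X^{j,\varepsilon})_j$ and $(\bar{X}^j)_j$, yields a probabilistic bound in which an exponential factor in $\varepsilon^{-\beta}$ for some $\beta>0$ competes against a polynomial factor in $N^{-1}$. The specific scaling $\varepsilon(N)\sim(\ln N)^{-2/(d+2)}$ is chosen precisely so that $e^{C/\varepsilon^{(d+2)/2}}\sim N$, balancing the polynomial-in-$\varepsilon$ regularization error against the exponential-in-$\varepsilon^{-(d+2)/2}$ concentration error; closing the Gronwall inequality and using the moment bounds inherited from $\int|x|\rho_0\,dx<+\infty$ in \eqref{inicon1} then yields $\mathbb{E}|X_t^{i,\varepsilon}-\bar{X}_t^i|\to 0$ uniformly on $[0,T]$, which is the quantitative ingredient needed to conclude \eqref{mean}.
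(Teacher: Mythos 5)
Your overall architecture---synchronous coupling with i.i.d.\ McKean--Vlasov processes, exchangeability to pass from a one-particle $L^1$ coupling bound to $f$-chaos of all $j$-marginals, and the scaling $\varepsilon(N)\sim(\ln N)^{-2/(d+2)}$ to tame the Gronwall exponential---matches the paper. However, there is a genuine gap in the choice of auxiliary process, and it is fatal as written.

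You introduce $\tilde X_t^{i,\varepsilon}$ as a \emph{linear} SDE whose drift $\nabla\tilde c^{\,\varepsilon}$ is built from the \emph{limiting} density $\rho$ with the time window truncated at $s-\varepsilon$. The paper instead uses the $\varepsilon$-dependent \emph{self-consistent} McKean--Vlasov process $\bar X_t^{i,\varepsilon}$ of equation \eqref{sto1equ}, whose drift is the expectation of the truncated kernel over its \emph{own} time marginal $\rho^\varepsilon$ solving \eqref{intermeKSeq}. This distinction is not cosmetic. In your scheme, when you estimate the ``fluctuation'' term
\[
\frac1N\sum_{j}\int_0^{s-\varepsilon} K^{(s,X_s^{i,\varepsilon})}(r,X_r^{j,\varepsilon})\,dr
-\mathbb{E}\Bigl[\int_0^{s-\varepsilon} K^{(s,x)}(r,\bar X_r^1)\,dr\Bigr]_{x=X_s^{i,\varepsilon}},
\]
you must split through some genuinely i.i.d.\ family (either $\bar X^j$ or $\tilde X^{j,\varepsilon}$). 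Whichever you pick, one piece is controlled by the Lipschitz constant of the singular kernel $K$, which is $O(\varepsilon^{-(d+2)/2})$, times a coupling distance, and another piece is the pure i.i.d.\ concentration. Crucially, the first piece re-introduces $\mathbb{E}|X^{j,\varepsilon}-\bar X^j|$ (or $\mathbb{E}|X^{j,\varepsilon}-\tilde X^{j,\varepsilon}|$ plus $\mathbb{E}|\tilde X^{j,\varepsilon}-\bar X^j|$) multiplied by $C\varepsilon^{-(d+2)/2}$, so your Gronwall inequality for $u(t)=\mathbb{E}\sup_{s\le t}|X_s^{i,\varepsilon}-\tilde X_s^{i,\varepsilon}|$ inevitably has the singular coefficient $C\varepsilon^{-(d+2)/2}$ in front of the integral \emph{and} an inhomogeneous term containing the truncation error $\mathbb{E}|\tilde X^{j,\varepsilon}-\bar X^j|=O(\varepsilon^\gamma)$. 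After applying Gronwall one obtains a bound of order
\[
\Bigl(\tfrac{C\varepsilon^\gamma}{\varepsilon^{(d+2)/2}}+\tfrac{C}{\sqrt N\,\varepsilon^{d/2}}\Bigr)\exp\!\Bigl(\tfrac{CT}{\varepsilon^{(d+2)/2}}\Bigr),
\]
and the first addend \emph{diverges} as $\varepsilon\to0$ for any fixed $\gamma>0$ and any choice of $\varepsilon(N)$, because $\varepsilon^{\gamma-(d+2)/2}\exp(C/\varepsilon^{(d+2)/2})\to\infty$. Your remark that the scaling is ``chosen to balance the polynomial-in-$\varepsilon$ regularization error against the exponential'' signals this misconception: the exponential always beats any polynomial in $\varepsilon$, so the truncation error must never be allowed inside the singular Gronwall.

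The paper avoids this by making $\bar X^{i,\varepsilon}$'s drift an expectation over $\rho^\varepsilon$ itself. Then in Proposition \ref{P} the fluctuation (eq.\ \eqref{Stoest2}--\eqref{Stoest3}) is a pure i.i.d.\ law-of-large-numbers term with \emph{no} truncation error, and only $1/\sqrt N$ is exponentiated---this is controllable by choosing $\varepsilon(N)$. The $O(\varepsilon)$ truncation error is isolated in Proposition \ref{fincoup}, which compares $\bar X^{i,\varepsilon}$ to $X^i$: there both drifts come from $C^{2,1}$ densities $\rho^\varepsilon$ and $\rho$ (with norms uniformly bounded in $\varepsilon$ by Proposition \ref{higeuniest}), so the Gronwall constant is $\varepsilon$-uniform and the truncation error stays $O(\varepsilon)$. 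To repair your proof you would need to replace $\tilde X^{i,\varepsilon}$ by the paper's self-consistent $\bar X^{i,\varepsilon}$, and this in turn requires proving well-posedness of the corresponding McKean--Vlasov SDE (Theorem \ref{existunim}) and uniform-in-$\varepsilon$ smoothing of the intermediate PDE \eqref{intermeKSeq} (Propositions \ref{uniest} and \ref{higeuniest})---machinery you have not set up and which constitutes a substantial part of the argument.
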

\begin{rem} The proof of Theorem \ref{meanfieldres} required the existence and uniqueness of the global weak solution to the KS system \eqref{diffaggKSeq}. Here the conditions \eqref{inicon1} and \eqref{inicon2} are sufficient for the existence of global weak solution to \eqref{diffaggKSeq} while the condition \eqref{inicon3} is to ensure that the weak solution is a classical solution and unique. Also, the $\epsilon$ modification of the drift term in the system \eqref{interparODES} plays a crucial role in terms of a threshold to get the global existence and uniqueness of strong solution to \eqref{interparODES}, see Theorem \ref{S}. Also, the logarithmic cut-off on $N$ of this paper in deriving propagation of chaos theory is technical in our proof. The case without a logarithmic cut-off on $N$ is very complex, which will be discussed in the future.\end{rem}

The proof of Theorem \ref{meanfieldres} is of length. Let us outline main steps here.  To study the convergence of the microscopic model to the macroscopic equation, we will split the proof into 3 steps. First, using the coupling method, as $N\rightarrow \infty$,  we show that the $N$ interacting particles $\{({X}^{i,\varepsilon}_t)_{t\geq0}\}_{i=1}^N$ respectively can be approximated by the processes $\{(\bar{X}_t^{i,\varepsilon})_{t\geq0}\}_{i=1}^N$, which are i.i.d processes and satisfy the following regularized self-consistent SDEs:
 \begin{eqnarray}\nonumber \label{intesde}
\bar{X}_t^{i,\varepsilon} &=&X^i_0+ \int_0^t\int_0^{s-\varepsilon}\int_{\mathbb{R}^{d}} \frac{e^{-\frac{|\bar{X}_s^{i,\varepsilon}-y|^2}{4 (s-r)} +\lambda (r-s) } }   {\big(4\pi(s-r)\big)^{\frac{d}{2}}}  \frac{y-\bar{X}_s^{i,\varepsilon}}{2(s-r)} df^{i,\varepsilon}_r(y) drds\\
&&+ \int_0^te^{-\lambda s}\int_{\mathbb{R}^{d}}\frac{e^{-\frac{|\bar{X}_s^{i,\varepsilon}-y|^2}{4s} } }   {(4\pi s)^{\frac{d}{2}}}\nabla c_0(y)dyds+\sqrt{2}B_t^i, ~~\hspace{0.2cm} t\geq 0,\hspace{0.2cm} 1\leq i \leq N,
\end{eqnarray}
 where $f_t^{i,\varepsilon}(x)$ is the time marginals of $(\bar{X}_t^{i,\varepsilon})_{t\geq0}$ and the initial data $\{X^{i}_0\}^N_{i=1}$ and Brownian motions $\{({B}_t^i)_{t\geq0}\}_{i=1}^N$ are the same as those of \eqref{interparODES}.  In Proposition \ref{P}, we derive that
  \begin{eqnarray}
\mathbb{E}\big[\sup\limits_{t\in[0,T]}|{X}_t^{i,\varepsilon}-\bar{{X}}_t^{i,\varepsilon}|\big]\leq\frac{C}{{\sqrt {N}\varepsilon^{\frac{d}{2}}}}\exp\big({\frac{C}{\varepsilon^{\frac{d}{2}+1}}}\big).
\end{eqnarray}

Second, we show that $f_t^{\varepsilon}$ has a density function $\rho^\varepsilon$ and $\rho^\varepsilon$ solves the following intermediate non-local system:
\begin{eqnarray}\label{pdeinter}
\left\{\begin{aligned}{}
&\partial_t\rho^\varepsilon-\nabla\cdot\big(\nabla \rho^\varepsilon  - \rho^\varepsilon\nabla c^\varepsilon \big)=0, 
\\
& c^\varepsilon=e^{-\lambda t} e^{t\Delta} c_0+ \int_0^{t-\varepsilon} e^{\lambda(s-t)} e^{(t-s)\Delta}\rho^\varepsilon(x,s) ds , 
\\
&\rho^\varepsilon(x,0)=\rho_0(x),  \hspace{0.5cm}  c^\varepsilon(x,0)=c_0(x), 
\end{aligned}\right.
\end{eqnarray}
where the heat semigroup operator $e^{t\Delta}$ is defined by
$$(e^{t\Delta }h)(x,t):=  \int_{\mathbb{R}^{d}}\frac{e^{-\frac{|x-y|^2}{4t} } }   {(4\pi t)^{\frac{d}{2}}}h(y)dy.$$
This is a parabolic-parabolic system with cross-diffusion, whose wellposedness theory is different from the standard parabolic system. But, here, we can obtain the uniform estimates of $(\rho^\varepsilon, c^\varepsilon)$ uniformly on $\epsilon$, see Proposition \ref{higeuniest}.
By compactness method, we prove that there exists a subsequence $(\rho^\varepsilon, c^\varepsilon)$ without relabeling goes to $(\rho, c)$ as $\varepsilon\rightarrow 0$, and $(\rho, c)$ is a weak solution to the diffusion-aggregation equation \eqref{diffaggKSeq}, see Theorem \ref{conthe}.
Also, we show that the weak solution has the following smoothness
\begin{eqnarray}
\|\rho\|_{C^{2,1}(\overline{\Omega}_T)}+\| c\|_{C^{2,1}(\overline{\Omega}_T)}\leq C,
\end{eqnarray}
and, then, based this regularity estimate and by constructing the corresponding SDE \eqref{stoKSfin} of \eqref{diffaggKSeq}, we prove that, for any $1\leq i\leq N$ and $T>0$,
\begin{eqnarray}
\mathbb{E}\big[\sup\limits_{t\in[0,T]}|\bar{{X}}^{i,\varepsilon}_t-{X}^i_t|\big]\leq  C_Te^{C_T T} \varepsilon,
\end{eqnarray}
where $\big\{({X}^{i}_t)_{t\geq0}\big\}^N_{i=1}$ is the unique strong solution to \eqref{stoKSfin} with the initial data $\{X^{i}_0\}^N_{i=1}$ and Brownian motions $\{({B}_t^i)_{t\geq0}\}_{i=1}^N$ being the same as those of \eqref{interparODES}. See Proposition \ref{fincoup}.

Third, we combine the first two steps together and couple the parameters $N$ and $\varepsilon$ by taking a sequence $\varepsilon\sim (\ln N)^{-\frac{2}{d+2}}\rightarrow0$ as $N\rightarrow\infty$. The joint limit shows that the trajectories of particle system \eqref{interparODES} converge to the trajectories of SDE \eqref{stoKSfin}, and then the propagation of chaos result \eqref{mean} is a corollary from this.

This paper is organized as follows. In section 2, we use the Picard successive approximation method to prove the well-posedness of the regularized $N$-particle system \eqref{interparODES}. In section 3, using the tool of Kantorovich-Rubinstein distance, we prove the well-posedness of the intermediate equation \eqref{intesde} and then estimate the
trajectories  between \eqref{interparODES} and \eqref{intesde}. In section 4,  we give some uniform estimates for the weak solution to the intermediate equation \eqref{pdeinter} and show that there exists a subsequence goes to a weak solution of \eqref{diffaggKSeq}. In section 5,  we prove the propagation of chaos results Theorem \ref{meanfieldres}. Finally, in the Appendix, we provide a supplementary proof of Lemma \ref{energeyest},  Proposition \ref{uniest} and  Theorem \ref{conthe}.


\section{Well-posedness for the regularized $N$-particle system}
This section is devoted to solve the stochastic differential equations \eqref{interparODES}. Since we havn't find a concrete result from SDE theory that can be cited directly, for completeness, we prove it by the standard Picard successive approximation method. For simplicity, we omit the superscript $\varepsilon$.
\begin{thm}\label{S}
For any fixed $\varepsilon>0$, $T>0$, assume the initial data $\nabla c_0\in W^{1,\infty}$, then the  stochastic differential equations \eqref{interparODES} has a unique (t-continuous) strong solution $\{X^i_t(\omega)\}_{i=1}^N$.
\end{thm}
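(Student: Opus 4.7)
The plan is to implement a standard Picard iteration on $[0,T]$, exploiting that the cutoff $\int_0^{s-\varepsilon}dr$ keeps $s-r\geq\varepsilon$ bounded away from zero, so that the heat-kernel-type drift coefficients are uniformly bounded and globally Lipschitz with constants depending only on $\varepsilon$, $d$, $\lambda$, and $T$. Writing the interaction kernel and the external field as
\begin{equation*}
K_{\tau}(z):=\frac{e^{-|z|^2/(4\tau)-\lambda\tau}}{(4\pi\tau)^{d/2}}\cdot\frac{-z}{2\tau},\qquad H(s,x):=e^{-\lambda s}\!\int_{\mathbb{R}^{d}}\!\frac{e^{-|x-y|^2/(4s)}}{(4\pi s)^{d/2}}\nabla c_{0}(y)\,dy,
\end{equation*}
a direct computation using $|z|e^{-|z|^2/(4\tau)}\lesssim\sqrt{\tau}$ gives, for all $\tau\geq\varepsilon$, $\|K_\tau\|_{L^\infty}\leq C_d\varepsilon^{-(d+1)/2}$ and $\|\nabla K_\tau\|_{L^\infty}\leq C_d\varepsilon^{-(d+2)/2}$, while integration by parts in the $H$-term together with $\nabla c_0\in W^{1,\infty}$ gives $\|H\|_{L^\infty}+\|\nabla_x H\|_{L^\infty}\leq\|\nabla c_0\|_{W^{1,\infty}}$ uniformly in $s\in[0,T]$.

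I would then define the Picard iterates by $X^{(0),i}_t:=X^i_0+\sqrt{2}B^i_t$ and, inductively,
\begin{equation*}
X^{(n+1),i}_t:=X^i_0+\frac{1}{N}\sum_{j=1}^{N}\!\int_0^t\!\!\int_0^{s-\varepsilon}\!K_{s-r}\bigl(X^{(n),i}_s-X^{(n),j}_r\bigr)\,dr\,ds+\int_0^t\!H(s,X^{(n),i}_s)\,ds+\sqrt{2}B^i_t,
\end{equation*}
which is adapted and $t$-continuous for every $n$. Using the Lipschitz bounds above and the triangle inequality,
\begin{equation*}
\sup_{u\leq t}\bigl|X^{(n+1),i}_u-X^{(n),i}_u\bigr|\leq C(\varepsilon,T)\!\int_0^t\max_{1\leq j\leq N}\sup_{r\leq s}\bigl|X^{(n),j}_r-X^{(n-1),j}_r\bigr|\,ds,
\end{equation*}
in which the contribution of the implicit self-dependence on $X^{(n)}_s$ is absorbed by the $\sup_{r\leq s}$ on the right-hand side. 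Setting $M_n(t):=\sum_{i=1}^{N}\mathbb{E}\sup_{u\leq t}|X^{(n),i}_u-X^{(n-1),i}_u|$, this gives $M_{n+1}(t)\leq C(\varepsilon,N,T)\!\int_0^t M_n(s)\,ds$, and iterating yields $M_n(T)\leq (C(\varepsilon,N,T)\,T)^n/n!\cdot M_1(T)\to 0$. Hence $\{X^{(n)}\}_{n\geq 0}$ is Cauchy in $L^1(\Omega;C([0,T];\mathbb{R}^{Nd}))$; its limit $X$ is adapted and $t$-continuous, and passage to the limit in every drift integral (by dominated convergence together with the uniform bounds from the first step) shows that $X$ satisfies \eqref{interparODES}.

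Uniqueness is immediate from the same Lipschitz estimate: if $(X^i)$ and $(Y^i)$ are two strong solutions with the same initial data and Brownian motions, then $\sum_i\mathbb{E}\sup_{u\leq t}|X^i_u-Y^i_u|\leq C(\varepsilon,N,T)\!\int_0^t\sum_i\mathbb{E}\sup_{r\leq s}|X^i_r-Y^i_r|\,ds$ and Gronwall's inequality forces $X\equiv Y$ on $[0,T]$. The only mildly delicate point is the implicit occurrence of $X^{i,\varepsilon}_s$ in $|X^{i,\varepsilon}_s-X^{j,\varepsilon}_r|^2$ inside $\int_0^{s-\varepsilon}\!dr$, which is not itself regularized by the cutoff; however, the Lipschitz constant of $K_{s-r}$ in this first argument is still finite (of order $\varepsilon^{-(d+2)/2}$ by the first step), which is all that the Gronwall argument requires. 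The constants blow up as $\varepsilon\downarrow 0$, but this is harmless here since $\varepsilon>0$ and $N$ are fixed throughout Theorem \ref{S}.
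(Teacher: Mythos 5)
Your proposal is correct and follows essentially the same strategy as the paper: a Picard iteration on $[0,T]$ that exploits the cutoff $s-r\geq\varepsilon$ to make the drift bounded and Lipschitz with $\varepsilon$-dependent constants, leading to a factorial Gronwall bound and then uniqueness by the same Lipschitz estimate. The only cosmetic difference is that you close the iteration by Cauchy-completeness of $L^1(\Omega;C([0,T];\mathbb{R}^{Nd}))$, whereas the paper works with $L^2(\Omega)$ estimates and invokes Markov plus Borel--Cantelli to obtain almost-sure uniform convergence.
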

\begin{proof}
{\bf Step 1.} The proof of the existence is similar to the existence proof for ordinary differential equations by Picard successive approximation method. We construct a approximating solution sequence in this step.

Define ${\bf{Y}}_t^{(0)}=\{X^1_0(\omega),X^2_0(\omega),\cdots,X^N_0(\omega)\}$ and ${\bf{Y}}_t^{(k)}=\{Y_t^{1,(k)}(\omega),Y_t^{2,(k)}(\omega),\cdots,Y_t^{N,(k)}(\omega)\}$ inductively as follows
\begin{eqnarray}\label{SDEinduc}\nonumber
Y_t^{i,(k+1)}&=&X^i_0+\frac{1}{N}\sum\limits_{j=1}^N\int_0^{t}\int_0^{s-\varepsilon} \frac{e^{-\frac{|Y_s^{i,(k)}-Y_r^{j,(k)}|^2}{4 (s-r)} +\lambda (r-s) } }   {\big(4\pi(s-r)\big)^{\frac{d}{2}}} \frac{Y_r^{j,(k)}-Y_s^{i,(k)}}{2(s-r)}drds\\
&&+\int_0^{t}e^{-\lambda s}\int_{\mathbb{R}^{d}}\frac{e^{-\frac{|Y_s^{i,(k)}-y|^2}{4s} } }   {(4\pi s)^{\frac{d}{2}}}\nabla c_0(y)dyds+\sqrt{2}{B}^i_t, ~~\hspace{0.1cm} ~1\leq i \leq N,
\end{eqnarray}
for $k\geq 0$, $t\leq T$.
Notice that
\begin{eqnarray}\label{1ststep}\nonumber
\mathbb{E}[|Y_t^{i,(1)}-X^i_0|^2]&=& \mathbb{E}\big[\big(\frac{1}{N}\sum\limits_{j=1}^N\int_0^{t}\int_0^{s-\varepsilon} \frac{e^{-\frac{|X_0^i-X_0^j|^2}{4 (s-r)} +\lambda (r-s) } }   {\big(4\pi(s-r)\big)^{\frac{d}{2}}} \frac{X_0^j-X_0^i}{2(s-r)}drds\\ \nonumber
&&+\int_0^{t}e^{-\lambda s}\int_{\mathbb{R}^{d}}\frac{e^{-\frac{|X_0^i-y|^2}{4s} } }   {(4\pi s)^{\frac{d}{2}}}\nabla c_0(y)dyds+\sqrt{2}{B}_t\big)^2\big]\\  \nonumber
&\leq& 3\mathbb{E}\big[\big(\frac{1}{N}\sum\limits_{j=1}^N\int_0^{t}\int_0^{s-\varepsilon} \frac{e^{-\frac{|X_0^i-X_0^j|^2}{4 (s-r)} +\lambda (r-s) } }   {\big(4\pi(s-r)\big)^{\frac{d}{2}}}  \frac{|X_0^j-X_0^i|}{2(s-r)}drds\big)^2 \big]
\\  \nonumber
&&+ 3\mathbb{E}\big[\|\nabla c_0\|^2_{\infty}\big(\int_0^{t}\int_{\mathbb{R}^{d}}\frac{e^{-\frac{|X^i_0-y|^2}{4s} } }   {(4\pi s)^{\frac{d}{2}}}dyds\big)^2\big]+6\mathbb{E}\big[|{B}_t|^2\big]\\
&\leq& C t,    \quad  1\leq i\leq N,
\end{eqnarray}
where the constant $C$ only depends on $\varepsilon$, $d$, $T$ and $\|\nabla c_0\|_{\infty}$. Notice that in the last inequality, we used the following inequality
\begin{eqnarray}\label{maxim1}
 g(u)=u e^{-u^2}\leq \frac{\sqrt{2}}{2}e^{-\frac{1}{2}}, \hspace {1cm} u \geq0.
\end{eqnarray}

Define
$$
a^{i,(k)}(s,\omega)=\frac{1}{N}\sum\limits_{j=1}^N\int_0^{s-\varepsilon} \frac{e^{-\frac{|Y_s^{i,(k)}-Y_r^{j,(k)}|^2}{4 (s-r)} +\lambda (r-s) } }   {\big(4\pi(s-r)\big)^{\frac{d}{2}}}  \frac{Y_r^{j,(k)}-Y_s^{i,(k)}}{2(s-r)}dr+e^{-\lambda s}\int_{\mathbb{R}^{d}}\frac{e^{-\frac{|Y_s^{i,(k)}-y|^2}{4s} } }   {(4\pi s)^{\frac{d}{2}}}\nabla c_0(y)dy
$$
for $k\geq 0$, $s\leq T$ and $1\leq i \leq N$.  A simple computation shows that for all $k\geq 1$,
\begin{eqnarray}\label{Lips}\nonumber
&&|a^{i,(k)}(s,\omega)-a^{i,(k-1)}(s,\omega)|\\ \nonumber
&\leq& \frac{C}{N}\sum\limits_{j=1}^N\int_0^{s-\varepsilon} \frac{|Y_s^{i,(k)}-Y_s^{i,(k-1)}|+|Y_r^{j,(k)}-Y_r^{j,(k-1)}|}{(s-r)^{\frac{d}{2}+1}}dr\\ \nonumber
&&+\int_{\mathbb{R}^{d}}\frac{e^{-\frac{|y|^2}{4s} } }   {(4\pi s)^{\frac{d}{2}}}\int_0^1\big|D^2 c_0\big(\theta (Y_s^{i,(k)}-y)+(1-\theta)(Y_s^{i,(k-1)}-y)\big)\big|d\theta dy |Y_s^{i,(k)}-Y_s^{i,(k-1)}|\\ \nonumber
&\leq& \frac{C}{N\varepsilon^{\frac{d}{2}+1}}\sum\limits_{j=1}^N\int_0^{s-\varepsilon} |Y_s^{i,(k)}-Y_s^{i,(k-1)}|+|Y_r^{j,(k)}-Y_r^{j,(k-1)}|dr\\ \nonumber
&&+\|D^2c_0\|_\infty\int_{\mathbb{R}^{d}}\frac{e^{-\frac{|y|^2}{4s} } }   {(4\pi s)^{\frac{d}{2}}}dy|Y_s^{i,(k)}-Y_s^{i,(k-1)}|\\ \nonumber
&\leq& C\big(\frac{1}{N}\sum\limits_{j=1}^N\int_0^{s-\varepsilon} |Y_r^{j,(k)}-Y_r^{j,(k-1)}|dr+|Y_s^{i,(k)}-Y_s^{i,(k-1)}| \big),
\end{eqnarray}
where $C$ only depends on $d$, $T$, $\varepsilon$ and $\|D^2c_0\|_\infty$.
Using the above inequality and H\"{o}lder's inequality, 
one has
\begin{eqnarray}\label{kststep}\nonumber
&&\mathbb{E}[|Y_t^{i,(k+1)}-Y_t^{i,(k)}|^2]\\ \nonumber
&=& \mathbb{E}\big[\big(\int_0^{t}(a^{i,(k)}(s,\omega)-a^{i,(k-1)}(s,\omega)) ds\big)^2\big]\\  \nonumber
&\leq& C\mathbb{E}\big[\big(\int_0^{t}\frac{1}{N}\sum\limits_{j=1}^N\int_0^{s-\varepsilon} |Y_r^{j,(k)}-Y_r^{j,(k-1)}|drds+\int_0^{t}|Y_s^{i,(k)}-Y_s^{i,(k-1)}| ds\big)^2\big]\\ \nonumber
&\leq& C\mathbb{E}\big[\big(\frac{1}{N}\sum\limits_{j=1}^N\int_0^{t}\int_0^{t} |Y_r^{j,(k)}-Y_r^{j,(k-1)}|drds+\int_0^{t}|Y_s^{i,(k)}-Y_s^{i,(k-1)}| ds\big)^2\big]\\
&\leq& \frac{C}{N}\sum\limits_{j=1}^N\int_0^{t}\mathbb{E}\big[|Y_s^{j,(k)}-Y_s^{j,(k-1)}|^2 \big]ds+C\int_0^{t}\mathbb{E}\big[|Y_s^{i,(k)}-Y_s^{i,(k-1)}|^2\big] ds,
\end{eqnarray}
where the constant  $C$ only depends on $d$, $T$, $\varepsilon$ and $\|D^2c_0\|_\infty$.

Combining \eqref{1ststep}, \eqref{kststep}  and by induction on $k$, we obtain
\begin{eqnarray}\nonumber
\max\limits_{1\leq i \leq N}\mathbb{E}[|Y_t^{i,(2)}-Y_t^{i,(1)}|^2]
&\leq& \frac{C}{N}\sum\limits_{j=1}^N\int_0^{t}\max\limits_{1\leq j \leq N}\mathbb{E}\big[|Y_s^{j,(1)}-Y_s^{j,(0)}|^2 \big]ds\\ \nonumber
&&+C\int_0^{t}\max\limits_{1\leq i \leq N}\mathbb{E}\big[|Y_s^{i,(1)}-Y_s^{i,(0)}|^2\big] ds\\
&\leq&\frac{C^{2}t^{2}}{2!} ,
\end{eqnarray}
$$
\vdots
$$
\begin{eqnarray}\label{inducststep}
\max\limits_{1\leq i \leq N}\mathbb{E}[|Y_t^{i,(k+1)}-Y_t^{i,(k)}|^2]
\leq \frac{C^{k+1}t^{k+1}}{(k+1)!} ,
\end{eqnarray}
for some suitable constant $C$ only depends on $d$, $T$, $\varepsilon$, $\|\nabla c_0(y)\|_{\infty}$ and $\|D^2c_0\|_\infty$.

Now a similar computation with \eqref{kststep} and using \eqref{inducststep} show that
\begin{eqnarray} \nonumber
\mathbb{E}[\sup\limits_{0\leq t \leq T}|Y_t^{i,(k+1)}-Y_t^{i,(k)}|^2]&\leq&\frac{C}{N}\sum\limits_{j=1}^N\int_0^{T}\mathbb{E}\big[|Y_s^{j,(k)}-Y_s^{j,(k-1)}|^2 \big]ds\\ \nonumber
&&+C\int_0^{T}\mathbb{E}\big[|Y_s^{i,(k)}-Y_s^{i,(k-1)}|^2\big] ds\\
&\leq& \frac{C^{k+1}T^{k+1}}{(k+1)!} \mbox{~~for~any~} 1\leq i \leq N.
\end{eqnarray}

{\bf Step 2.} We get a strong solution of \eqref{interparODES} by taking limit $k\rightarrow\infty$.

Since, for any $1\leq i \leq N$, using Markov's inequality, one has
\begin{eqnarray}
\mathbb{P}[\sup\limits_{0\leq t \leq T}|Y_t^{i,(k+1)}-Y_t^{i,(k)}|>\frac{1}{2^k}]\leq 2^{2k} \mathbb{E}[\sup\limits_{0\leq t \leq T}|Y_t^{i,(k+1)}-Y_t^{i,(k)}|^2]\leq 2^{2k}\frac{C^{k+1}T^{k+1}}{(k+1)!}
\end{eqnarray}
and
\begin{eqnarray}
\sum\limits_{k=1}^\infty 2^{2k}\frac{C^{k+1}T^{k+1}}{(k+1)!}<\infty.
\end{eqnarray}
The Borel-Cantelli Lemma thus applies
\begin{eqnarray}
\mathbb{P}[\sup\limits_{0\leq t \leq T}|Y_t^{i,(k+1)}-Y_t^{i,(k)}|>\frac{1}{2^k}~~\mbox{i.o.} ]=0,
\end{eqnarray}
where  i.o. is an abbreviation for infinitively often. In light of this,
\begin{eqnarray}\label{inducststepstrong}
Y_t^{i,(k)}=Y_t^0+ \sum\limits_{n=0}^{k-1}( Y_t^{i,(n+1)}-Y_t^{i,(n)}) \xrightarrow{k\rightarrow\infty} X_t^{i} \mbox{~~for~a.e.~} \omega, ~~1\leq i \leq N,
\end{eqnarray}
uniformly on $[0, T]$.

Hence, taking limit $k\rightarrow\infty$ in \eqref{SDEinduc}, one gets that $\{X_t^i\}_{i=1}^N$ satisfies equation \eqref{interparODES}, i.e. for all $t\in[0,T]$,
\begin{eqnarray}\nonumber
X^i_t&=&X^i_0+\frac{1}{N}\sum\limits_{j=1}^N\int_0^{t}\int_0^{s-\varepsilon} \frac{e^{-\frac{|X^i_s-X^j_r|^2}{4 (s-r)} +\lambda (r-s) } }   {\big(4\pi(s-r)\big)^{\frac{d}{2}}}\frac{X^j_r-X^i_s}{2(s-r)}drds\\
&&-\int_0^{t}e^{-\lambda s}\int_{\mathbb{R}^{d}}\frac{e^{-\frac{|X^i_s-y|^2}{4s} } }   {(4\pi s)^{\frac{d}{2}}}\nabla c_0(y)dyds+\sqrt{2}{B}^i_t, \mbox{~a.s.~~for~} ~1\leq i \leq N,
\end{eqnarray}
and there is a t-continuous version of the right hand side of the above equation. Hence $\{X^i_t\}_{i=1}^N$ can be chosen to be t-continuous.

{\bf Step 3.} Finally, we prove that the strong solution of \eqref{interparODES} is unique.

Let ${\bf{X}}_t(\omega)=\{X^1(t,\omega), X^2(t,\omega),\cdots, X^N(t,\omega)\}$ and ${\bf{Y}}_t(\omega)=\{Y^1(t,\omega), Y^2(t,\omega)$, $\cdots$, $Y^N(t,\omega)\}$ be two solutions of  \eqref{interparODES} with the same initial data ${\bf{X}}_0$. Define for $ 1\leq i\leq N$,
$$
a^i(s,{\bf{X}})=\frac{1}{N}\sum\limits_{j=1}^N\int_0^{s-\varepsilon} \frac{e^{-\frac{|X^i_s-X^j_r|^2}{4 (s-r)} +\lambda (r-s) } }   {\big(4\pi(s-r)\big)^{\frac{d}{2}}}  \frac{X^j_r-X^i_s}{2(s-r)}dr+e^{-\lambda s}\int_{\mathbb{R}^{d}}\frac{e^{-\frac{|X^i_s-y|^2}{4s} } }   {(4\pi s)^{\frac{d}{2}}}\nabla c_0(y)dy.
$$
  A similar computation with \eqref{Lips} shows that
\begin{eqnarray}\label{Lips11}
|a^i(s,{\bf{X}})-a^{i}(s,{\bf{Y}})|\leq C\big(\frac{1}{N}\sum\limits_{j=1}^N\int_0^{s-\varepsilon} |X^j_r-Y^j_r|dr+|X^i_s-Y^i_s|\big),
\end{eqnarray}
where $C$ only depends on $d$, $T$, $\varepsilon$ and $\|D^2c_0\|_\infty$.

Then by H\"{o}lder's inequality, one has
\begin{eqnarray}\label{unique}\nonumber
\max\limits_{1\leq i \leq N}\mathbb{E}[|X^i_t-Y^i_t|^2]&=&\max\limits_{1\leq i \leq N} \mathbb{E}\big[\big(\int_0^{t}(a^i(s,{\bf{X}})-a^{i}(s,{\bf{Y}})) ds\big)^2\big]\\  \nonumber
&\leq& C\max\limits_{1\leq i \leq N}\mathbb{E}\big[\big(\frac{1}{N}\sum\limits_{j=1}^N\int_0^{t}\int_0^{s-\varepsilon} |X^j_r-Y^j_r|drds+\int_0^{t}|X^i_s-Y^i_s| ds\big)^2\big]\\ \nonumber
&\leq& C\int_0^{t}\max\limits_{1\leq i \leq N}\mathbb{E}\big[|X^i_s-Y^i_s|^2\big] ds.
\end{eqnarray}
Therefore, by the Gronwall's inequality,
$$
\max\limits_{1\leq i \leq N}\mathbb{E}[|X^i_t-Y^i_t|^2]=0, ~~\forall~ t\geq 0.
$$
 Hence
\begin{eqnarray}
\mathbb{P}\big[|{\bf{X}}_t-{\bf{Y}}_t|=0 ~\mbox{~for~all~} t\in\mathbf{Q}\cap[0,T]\big]=1,
\end{eqnarray}
where $\mathbf{Q}$ denotes the rational numbers.

By continuity of $t\rightarrow|{\bf{X}}_t-{\bf{Y}}_t|$, it follows that
\begin{eqnarray}
\mathbb{P}\big[|{\bf{X}}_t-{\bf{Y}}_t|=0 ~\mbox{~for~all~} t\in[0,T]\big]=1,
\end{eqnarray}
i.e. the pathwise uniqueness is proved.
\end{proof}

\section{Well-posedness for the intermediate SDEs and convergence argument}
In this section, following the sprit of \cite{S1991},  we show that  each particle path of \eqref{interparODES} has a natural limit $\bar{X}_t^{i,\varepsilon}$ when $N$ goes to infinity.  $\{\bar{X}_t^{i,\varepsilon}\}_{i=1}^N$ are i.i.d. and each path satisfies the following regularized self-consistent SDEs:
 \begin{eqnarray}\label{sto1equ}\nonumber
\bar{X}_t^{i,\varepsilon} &=&X^i_0+ \int_0^t\int_0^{s-\varepsilon}\int_{\mathbb{R}^{d}} \frac{e^{-\frac{|\bar{X}_s^{i,\varepsilon}-y|^2}{4 (s-r)} +\lambda (r-s) } }   {\big(4\pi(s-r)\big)^{\frac{d}{2}}}  \frac{y-\bar{X}_s^{i,\varepsilon}}{2(s-r)} df^{i,\varepsilon}_r(y) drds\\
&&+ \int_0^te^{-\lambda s}\int_{\mathbb{R}^{d}}\frac{e^{-\frac{|\bar{X}_s^{i,\varepsilon}-y|^2}{4s} } }   {(4\pi s)^{\frac{d}{2}}}\nabla c_0(y)dyds+\sqrt{2}B_t^i, ~~\hspace{0.2cm} t\geq 0,\hspace{0.2cm} 1\leq i \leq N,
\end{eqnarray}
 where $f_t^{i,\varepsilon}(x)$ is the time marginals of $(\bar{X}_t^{i,\varepsilon})_{t\geq0}$, the initial data $\{X^{i}_0\}^N_{i=1}$ and Brownian motions $\{({B}_t^i)_{t\geq0}\}_{i=1}^N$ are the same as those of \eqref{interparODES}.

For any $f\in L^\infty\big(0,T; \mathcal{P}_1(\mathbb{R}^{d})\big) $,  define the drift term of equation \eqref{sto1equ} as
\begin{eqnarray}\label{defmathcalT}
B^\varepsilon_{[f]}(t,x)=\int_0^{t-\varepsilon} \int_{\mathbb{R}^{d}}\frac{e^{-\frac{|x-y|^2}{4 (t-s)} +\lambda (s-t) } }   {\big(4\pi(t-s)\big)^{\frac{d}{2}}} \frac{y-x}{2(t-s)}df_s(y)ds+e^{-\lambda t}\int_{\mathbb{R}^{d}}\frac{e^{-\frac{|x-y|^2}{4t} } }   {(4\pi t)^{\frac{d}{2}}}\nabla c_0(y)dy.
\end{eqnarray}

In Subsection 3.1, we introduce the useful tool of Wasserstein distance. In Subsection 3.2, we use the standard contraction argument and fixed point theorem to get the existence and uniqueness of strong solution to \eqref{sto1equ}. Finally, in Subsection 3.3,  we give a trajectorial estimate between \eqref{interparODES} and \eqref{sto1equ}.

\subsection{Preliminaries}
First we introduce a topology of the 1-Wasserstein space which will be used for  proving the well-posedness of \eqref{sto1equ}. Define the following space
$$
\mathcal{P}_1(\mathbb{R}^{d})=\big\{f|~f \mbox{ is a probability measure on } \mathbb{R}^{d} \mbox{ and } \int_{\mathbb{R}^{d}}|x| df(x)<+\infty\big\}.
$$
The so called Kantorovich-Rubinstein distance in $\mathcal{P}_1(\mathbb{R}^{d})$ is given as follows
$$
\mathcal {W}_1(f,~g)=\inf_{\pi\in\Lambda(f,~g)}\Big\{\int_{\mathbb{R}^{d}\times\mathbb{R}^{d}}|x-y|d\pi(x,y)\Big\},
$$
where $\Lambda(f,~g)$ is the set of joint probability measures on ${\mathbb{R}^{d}}\times{\mathbb{R}^{d}}$ with marginals $f$ and $g$.  If $f,~g$ have densities $\rho_1,~\rho_2$ respectively, we also denote the distance as $\mathcal {W}_1(\rho_1,~\rho_2)$. In \cite[Theorem 6.18]{V}, it has been proven that $\mathcal{P}_1(\mathbb{R}^{d})$ endowed with this distance is a complete metric space.  And by  \cite[Theorem 6.9]{V}, the following proposition holds.
\begin{prop}\label{property}
For a sequence of $\big\{f_k\big\}_{k=1}^{\infty}$ and $f$ in $\mathcal{P}_1(\mathbb{R}^{d})$, the convergence of $\big\{f_k\big\}_{k=1}^{\infty}$ to $f$ in the  1-Wasserstein distance implies the narrow convergence of $\big\{f_k\big\}_{k=1}^{\infty}$, i.e.
$$
\mathcal {W}_1(f_k,f)\xrightarrow{k\rightarrow\infty} 0  \Rightarrow  \int\varphi df_k(x)\xrightarrow{k\rightarrow\infty}\int\varphi df(x) \mbox{~~for~any~} \varphi\in C_b(\mathbb{R}^{d}),
$$
 where $C_b(\mathbb{R}^{d})$ is the space of continuous and bounded functions.
\end{prop}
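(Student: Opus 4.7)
The plan is to invoke the Kantorovich--Rubinstein duality
\[
\mathcal{W}_1(f,g)=\sup\Big\{\int_{\mathbb{R}^d}\varphi\,d(f-g):\varphi\text{ is 1-Lipschitz}\Big\},
\]
which is classical on $\mathcal{P}_1(\mathbb{R}^d)$. This immediately yields $\int\varphi\,df_k\to\int\varphi\,df$ for every 1-Lipschitz $\varphi$, and, by linear rescaling, for every Lipschitz bounded $\varphi$. The real task is therefore to upgrade this to arbitrary $\varphi\in C_b(\mathbb{R}^d)$.

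To do so, I would first extract tightness of $\{f_k\}$. Testing duality with the 1-Lipschitz cut-off $\psi_R(x)=\min\{|x|,R\}$ and letting $R\to\infty$ shows that the first moments $\int|x|\,df_k$ are uniformly bounded; by Markov's inequality, for every $\eta>0$ there exists $R_\eta$ with $f_k(\{|x|>R_\eta\})<\eta$ uniformly in $k$, and similarly for $f$. Given $\varphi\in C_b(\mathbb{R}^d)$, I would then regularize it by the Moreau--Yosida inf-convolution
\[
\varphi_n(x)=\inf_{y\in\mathbb{R}^d}\{\varphi(y)+n\,|x-y|\},
\]
which is $n$-Lipschitz, satisfies $\|\varphi_n\|_\infty\le\|\varphi\|_\infty$, and converges uniformly to $\varphi$ on any compact set as $n\to\infty$ (since $\varphi$ is uniformly continuous on $\overline{B_{R_\eta+1}}$). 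Splitting
\[
\int\varphi\,d(f_k-f)=\int(\varphi-\varphi_n)\,d(f_k-f)+\int\varphi_n\,d(f_k-f),
\]
the tail contribution of the first term is bounded by $4\|\varphi\|_\infty\,\eta$, its contribution on $B_{R_\eta}$ is controlled by $\|\varphi-\varphi_n\|_{L^\infty(B_{R_\eta})}\to 0$, and the second term goes to $0$ as $k\to\infty$ by the Lipschitz case already established.

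The only subtle point is the ordering of the limits: the cut-off radius $R_\eta$ must be chosen before the Lipschitz scale $n$ (because $\varphi_n$ is only a good approximation of $\varphi$ on a prescribed compact set), and $n$ must be chosen before passing to $k\to\infty$. Sending $k\to\infty$, then $n\to\infty$, then $\eta\to 0$ in that order closes the argument. No individual step is deep, but this quantifier bookkeeping, together with the tightness from the first-moment bound, is the technical heart of the proof.
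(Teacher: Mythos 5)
Your proof is correct, but it takes a genuinely different route from the paper. The paper does not prove this proposition at all; it simply cites \cite[Theorem 6.9]{V} (Villani's \emph{Optimal Transport, Old and New}), which characterizes $\mathcal{W}_p$-convergence as weak convergence plus convergence of $p$-th moments, and reads off the one implication it needs. Your argument instead reconstructs that implication from scratch via Kantorovich--Rubinstein duality, uses the $\psi_R=\min\{|x|,R\}$ test function together with $\mathcal{W}_1(f_k,\delta_0)\leq \mathcal{W}_1(f_k,f)+\mathcal{W}_1(f,\delta_0)$ to get a uniform first-moment bound and hence tightness, and then upgrades from Lipschitz to $C_b$ test functions by Moreau--Yosida inf-convolution with the $\eta$-then-$n$-then-$k$ quantifier discipline handled correctly. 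All the steps check out: the $n$-Lipschitz bound and $\|\varphi_n\|_\infty\leq\|\varphi\|_\infty$ for the inf-convolution, uniform convergence $\varphi_n\to\varphi$ on compacts (needing only local uniform continuity of a bounded continuous $\varphi$), the $4\|\varphi\|_\infty\eta$ tail bound, and the final $\limsup_k$, then $n\to\infty$, then $\eta\to0$ closure. What your approach buys is self-containedness and transparency about exactly what is used; what the paper's approach buys is brevity and deference to a standard reference, which is the conventional choice for a statement this classical. If you wanted to match the paper's level of generality gratis, you could also note that Villani's theorem actually gives an equivalence (weak convergence together with convergence of first moments), whereas you only prove and only need the forward implication.
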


In this paper we use the following time dependent space $L^\infty\big(0,T; \mathcal{P}_1(\mathbb{R}^{d})\big)$:
$$
\big\{f(t,x)| ~f(t,\cdot) \mbox{ is a probability measure on } \mathbb{R}^{d}\mbox{ for any time $t$ } \mbox{ and} \sup\limits_{t\in{[0,T]}}\int_{\mathbb{R}^d}|x| df(t,x)<+\infty\big\}.
  $$
  endowed with metric
$$
\mathcal {M}_T(f_t^1,f_t^2)=\sup\limits_{t\in{[0,T]}}\mathcal {W}_1(f_t^1,f_t^2).
$$
 And the following proposition is well known, c.f. \cite{BCC2}.
 \begin{prop}\label{timedep}
 $\big(L^\infty\big(0,T; \mathcal{P}_1(\mathbb{R}^{d})\big),  \mathcal {M}_T \big)$  is a complete metric space.
\end{prop}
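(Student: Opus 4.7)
The plan is to first verify that $\mathcal{M}_T$ inherits the metric axioms from $\mathcal{W}_1$, which is routine: non-negativity, symmetry, the identity of indiscernibles, and the triangle inequality all transfer under the supremum in $t$. The substantive content is completeness, so I would concentrate the argument there.

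Given a Cauchy sequence $\{f^n\}$ in $\big(L^\infty(0,T;\mathcal{P}_1(\mathbb{R}^d)),\mathcal{M}_T\big)$, the first step is to extract a candidate limit pointwise in $t$. Since $\mathcal{W}_1(f^n_t, f^m_t) \leq \mathcal{M}_T(f^n, f^m) \to 0$ as $n,m\to\infty$, for each fixed $t\in[0,T]$ the sequence $\{f^n_t\}$ is Cauchy in $(\mathcal{P}_1(\mathbb{R}^d),\mathcal{W}_1)$. Invoking the completeness of $(\mathcal{P}_1(\mathbb{R}^d),\mathcal{W}_1)$ from Villani's Theorem 6.18, one obtains, for every $t$, an element $f_t \in \mathcal{P}_1(\mathbb{R}^d)$ with $\mathcal{W}_1(f^n_t, f_t)\to 0$.

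Next I would upgrade pointwise convergence to uniform convergence in $t$, and simultaneously check that the limit lies in the ambient space. Fix $\varepsilon>0$ and choose $N_0$ such that $\mathcal{M}_T(f^n,f^m)<\varepsilon$ whenever $n,m\geq N_0$. Then for every $t\in[0,T]$ and every such $n,m$, $\mathcal{W}_1(f^n_t, f^m_t)<\varepsilon$; sending $m\to\infty$ at fixed $t$ and using the joint continuity of $\mathcal{W}_1$ under convergence in $\mathcal{P}_1$ (a direct consequence of its own triangle inequality) yields $\mathcal{W}_1(f^n_t, f_t)\leq\varepsilon$ for every $t$ and every $n\geq N_0$. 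Taking the supremum in $t$ gives $\mathcal{M}_T(f^n,f)\leq\varepsilon$, hence $\mathcal{M}_T(f^n,f)\to 0$. Membership $f\in L^\infty(0,T;\mathcal{P}_1(\mathbb{R}^d))$ then follows by applying the $\mathcal{W}_1$-triangle inequality with the Dirac mass $\delta_0$: since $\mathcal{W}_1(g,\delta_0)=\int|x|\,dg(x)$ for any $g\in\mathcal{P}_1$, one has
$$\int_{\mathbb{R}^d}|x|\,df_t(x)\;\leq\;\mathcal{W}_1(f_t,f^{N_0}_t)+\int_{\mathbb{R}^d}|x|\,df^{N_0}_t(x)\;\leq\;\varepsilon+\sup_{s\in[0,T]}\int_{\mathbb{R}^d}|x|\,df^{N_0}_s(x),$$
and the right-hand side is finite and $t$-independent since $f^{N_0}\in L^\infty(0,T;\mathcal{P}_1(\mathbb{R}^d))$.

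The only delicate point is the passage from the Cauchy estimate $\mathcal{W}_1(f^n_t,f^m_t)<\varepsilon$, which is uniform in $t$ but indexed over $m$, to a genuine uniform-in-$t$ bound on $\mathcal{W}_1(f^n_t,f_t)$ in the limit $m\to\infty$; this is the step where one must avoid losing the supremum in $t$ when taking $m\to\infty$. The argument above resolves it by doing the limit pointwise in $t$ \emph{after} having fixed a single $N_0$ that works for all $t$ simultaneously, so the uniform estimate is preserved. All other steps are essentially bookkeeping, and no new machinery beyond the completeness of $\mathcal{P}_1(\mathbb{R}^d)$ and the metric properties of $\mathcal{W}_1$ is required.
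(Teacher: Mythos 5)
Your proof is correct, and since the paper simply cites \cite{BCC2} for this fact without reproducing an argument, there is no in-text proof to compare against. Your argument is the standard one (pointwise completeness of $(\mathcal{P}_1,\mathcal{W}_1)$, then upgrading to uniformity by fixing a Cauchy index before sending the other index to infinity at each fixed $t$, then bounding the first moment of the limit via the triangle inequality against $\delta_0$), and it matches the paper's minimalist definition of $L^\infty\big(0,T;\mathcal{P}_1(\mathbb{R}^d)\big)$, which imposes only a uniform-in-$t$ first-moment bound and no measurability-in-$t$ requirement, so no further verification is needed.
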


\subsection{well-posedness of the regularized nonlinear SDEs }

Before giving the existence and uniqueness of strong solution to \eqref{sto1equ}, we give the following two lemmas for properties of the drift term $B^\varepsilon_{[f]}(t,x)$.

\begin{lem} \label{Hlp}
Assume that  $\nabla c_0\in W^{1,\infty}$, then for any $f\in L^\infty\big(0, T; \mathcal{P}(\mathbb{R}^{d})\big)$, the following hold
\begin{enumerate}
  \item $B^\varepsilon_{[f]}(t,x)$ is continuous in $[0,T]\times\mathbb{R}^{d}$;
   \item  $|B^\varepsilon_{[f]}(t,x)|\leq O(\frac{1}{{\varepsilon ^{\frac{d-1}{2}}}})$, for all $(t,x)\in [0,T]\times \mathbb{R}^{d}$;
  \item $B^\varepsilon_{[f]}(t,x)$ is Lipschitz with respect to x,
i.e.  there exists $L^\varepsilon=O(\frac{1}{\varepsilon^{\frac{d}{2}}})$, such that for $t\in[0,T], x_1, x_2 \in \mathbb{R}^{d}$ ,
$$
\big|B^\varepsilon_{[f]}(t,x_1)-B^\varepsilon_{[f]}(t,x_2)\big|\leq{L^\varepsilon\big|x_1-x_2\big|}.
$$
\end{enumerate}
\end{lem}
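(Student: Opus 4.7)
The plan is to treat the two summands in $B^\varepsilon_{[f]}$ separately: the nonlocal drift term involving $f$, call it $I_1(t,x)$, and the heat-semigroup term involving $\nabla c_0$, call it $I_2(t,x)$. Both the size bound (2) and the Lipschitz estimate (3) reduce to pointwise estimates on the underlying heat-type kernel and its $x$-gradient, after which the continuity claim (1) will follow from dominated convergence.

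For (2), the key observation is the elementary inequality $u e^{-u^{2}}\le \frac{\sqrt{2}}{2}e^{-1/2}$ already used in the previous section, see \eqref{maxim1}. Setting $u=|y-x|/(2\sqrt{t-s})$, this yields the pointwise estimate
$$\left|\frac{e^{-|x-y|^{2}/(4(t-s))}}{(4\pi(t-s))^{d/2}}\cdot\frac{y-x}{2(t-s)}\right|\le\frac{C}{(t-s)^{(d+1)/2}}.$$
Since $f_s$ is a probability measure, integrating against $df_s(y)$ preserves this bound, and the substitution $u=t-s$ then produces $\int_\varepsilon^{t} u^{-(d+1)/2}\,du = O(\varepsilon^{-(d-1)/2})$; the cut-off at $t-\varepsilon$ is precisely what saves integrability near the upper endpoint. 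The term $I_2$ is bounded immediately by $\|\nabla c_0\|_\infty$ since the heat kernel integrates to one, so the claimed asymptotic follows.

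For (3), we differentiate the kernel in $x$. A direct computation gives
$$\nabla_x\!\left[\frac{e^{-|x-y|^{2}/(4(t-s))}}{(4\pi(t-s))^{d/2}}\cdot\frac{y-x}{2(t-s)}\right]=\frac{e^{-|x-y|^{2}/(4(t-s))}}{(4\pi(t-s))^{d/2}}\left[\frac{(y-x)\otimes(y-x)}{4(t-s)^{2}}-\frac{\mathrm{Id}}{2(t-s)}\right],$$
and applying $u^{k}e^{-u^{2}}\le C_k$ with $k=0,2$ yields a uniform upper bound of order $(t-s)^{-(d+2)/2}$. Integrating in $s\in(0,t-\varepsilon)$ gives $O(\varepsilon^{-d/2})$, so the mean value theorem in $x$ produces the desired Lipschitz constant for $I_1$. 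For $I_2$, Lipschitz continuity is immediate from $\|D^{2}c_0\|_\infty<\infty$.

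Finally, (1) reduces to (2)--(3) plus dominated convergence. Continuity in $x$ follows at once from the Lipschitz bound. For continuity in $t$, one splits the $s$-integral at the minimum of $t-\varepsilon$ and $t'-\varepsilon$ and uses the pointwise bound from (2) to dominate the vanishing sliver, combined with the continuity in $t$ of the integrand on the remaining interval (which stays uniformly away from $s=t$). The only mildly delicate point is verifying compatibility with the modified integration convention at $t=\varepsilon$, but since the interval shrinks to a point, $I_1(\varepsilon^{+},x)=0$, as required. None of these steps should pose a serious obstacle; the main care needed is simply tracking the exact powers of $\varepsilon$ produced by the singular integrals near $s=t$.
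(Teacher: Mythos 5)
Your proposal is correct and follows essentially the same route as the paper: split $B^\varepsilon_{[f]}$ into the nonlocal and heat-semigroup pieces, bound the nonlocal kernel (and its $x$-gradient) by $(t-s)^{-(d+1)/2}$ (resp. $(t-s)^{-(d+2)/2}$) via elementary inequalities of the type $u^k e^{-u^2}\le C_k$, then integrate over $s\in(0,t-\varepsilon)$ to extract the stated powers of $\varepsilon$. The only cosmetic differences are that the paper applies the mean value theorem to the integrand directly rather than first writing down $\nabla_x K$, and it dismisses part (1) as trivial whereas you sketch the dominated-convergence argument.
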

\begin{proof}
The statement (1) is trivial, we only need to prove (2) and (3).

Simple computation showes that
\begin{eqnarray*}
|B^\varepsilon_{[f]}(t,x)|&\leq&e^{-\lambda t}\big(\int_0^{t-\varepsilon} \int_{\mathbb{R}^{d}}\frac{e^{-\frac{|x-y|^2}{4 (t-s)} +\lambda s } }   {\big(4\pi(t-s)\big)^{\frac{d}{2}}} \frac{|y-x|}{2(t-s)}df_s(y)ds+\int_{\mathbb{R}^{d}}\frac{e^{-\frac{|x-y|^2}{4t} } }   {(4\pi t)^{\frac{d}{2}}}|\nabla c_0(y)|dy\big)\\
&\leq&\|\nabla c_0\|_{\infty}+\int_0^{t-\varepsilon} \frac{e^{-\lambda (t-s) } }   {\big(4\pi(t-s)\big)^{\frac{d}{2}} \sqrt{t-s} } \int_{\mathbb{R}^{d}}\frac{|x-y|}{2 \sqrt{t-s}} e^{-\frac{|x-y|^2}{4 (t-s)} } df_s(y)ds\\
&\leq&\|\nabla c_0\|_{\infty}+C_d\int_\varepsilon^{t} \frac{1}{u^{\frac{d+1}{2}}}du\\
&\leq&\|\nabla c_0\|_{\infty}+\frac{C_d}{{\varepsilon ^{\frac{d-1}{2}}}}=O(\frac{1}{{\varepsilon ^{\frac{d-1}{2}}}}),
\end{eqnarray*}
notice that we used inequality \eqref{maxim1}.

 By application of the mean value theorem  for integral, one also has
\begin{eqnarray*}
&&|B^\varepsilon_{[f]}(t,x_1)-B^\varepsilon_{[f]}(t,x_2)|\\
&&\leq|x_1-x_2|\Big(\int_{\mathbb{R}^{d}}\frac{e^{-\frac{|y|^2}{4t} } }   {(4\pi t)^{\frac{d}{2}}}\int_0^1\big|D^2 c_0(\theta(x_1-y)+(1-\theta)(x_2-y)\big)\big|d\theta dy+\\
&&
C_d\int_0^{t-\varepsilon} \int_{\mathbb{R}^{d}}\int_0^1\frac{|y-(\theta x_1+(1-\theta)x_2)|^2 e^{-\frac{|y-(\theta x_1+(1-\theta)x_2)|^2}{4 (t-s)} } }   {(t-s)^{\frac{d}{2}+2}}d\theta df_s(y)ds\\
&&+C_d\int_0^{t-\varepsilon} \int_{\mathbb{R}^{d}}\int_0^1\frac{e^{-\frac{|y-(\theta x_1+(1-\theta)x_2)|^2}{4 (t-s)}  } }   {(t-s)^{\frac{d}{2}+1}} d\theta df_s(y)ds\Big)
\\
&&\leq \Big(\|D^2c_0\|_\infty+\frac{C_d }{\varepsilon^{\frac{d}{2}}}\Big)|x_1-x_2|=O(\frac{1}{\varepsilon^{\frac{d}{2}}})|x_1-x_2|,
\end{eqnarray*}
notice that we used the following inequality
\begin{eqnarray}\label{maxim2}
g(u)=u e^{-u}\leq e^{-1}, \hspace {1cm} u \geq0.
\end{eqnarray}
\end{proof}

\begin{lem} \label{l5}
For any $f$, $g\in L^\infty\big(0, T;~\mathcal{P}_1(\mathbb{R}^{d})\big)$, the following inequality holds
\begin{eqnarray}\label{l5este}
\big\|B^\varepsilon_{[f]}(t,\cdot)-B^\varepsilon_{[g]}(t,\cdot)\big\|_{L^{\infty}(\mathbb{R}^{d})}\leq O(\frac{1}{\varepsilon^{\frac{d}{2}+1}}) \int_0^{t-\varepsilon}\mathcal{W}_1(f_s,g_s) ds, \mbox{~for~any~} 0< t\leq T.
\end{eqnarray}
\end{lem}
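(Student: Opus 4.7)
The plan is to use Kantorovich--Rubinstein duality. Since the $\nabla c_0$ term in the definition \eqref{defmathcalT} of $B^\varepsilon_{[f]}$ is independent of $f$, it cancels in the difference, and we are left with
\begin{equation*}
B^\varepsilon_{[f]}(t,x)-B^\varepsilon_{[g]}(t,x) = \int_0^{t-\varepsilon}\!\int_{\mathbb{R}^d} K(t-s,x,y)\,d(f_s-g_s)(y)\,ds,
\end{equation*}
where $K(u,x,y)=\dfrac{e^{-|x-y|^2/(4u)+\lambda(s-t)}}{(4\pi u)^{d/2}}\cdot\dfrac{y-x}{2u}$. The duality formulation of $\mathcal{W}_1$ then gives, for each fixed $t,s,x$ with $u=t-s\geq\varepsilon$,
\begin{equation*}
\Bigl|\int_{\mathbb{R}^d} K(u,x,y)\,d(f_s-g_s)(y)\Bigr|\leq \mathrm{Lip}_y\bigl(K(u,x,\cdot)\bigr)\cdot \mathcal{W}_1(f_s,g_s),
\end{equation*}
so the whole task reduces to a pointwise estimate of $\nabla_y K$ that is uniform in $y$ and in $x$.

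Writing $z=y-x$, a direct differentiation yields two contributions: one from differentiating the factor $(y-x)$, and one from differentiating the Gaussian. Schematically,
\begin{equation*}
|\nabla_y K(u,x,y)|\;\leq\; \frac{e^{-|z|^2/(4u)}}{(4\pi u)^{d/2}}\left[\frac{1}{2u}+\frac{|z|^2}{4u^2}\right],
\end{equation*}
using $e^{\lambda(s-t)}\leq 1$ because $\lambda\geq 0$ and $s\leq t$. The first bracketed term is already $\lesssim u^{-d/2-1}$ after absorbing the Gaussian by $1$. For the second, I substitute $w=|z|^2/(4u)$ to rewrite it as $\frac{w\,e^{-w}}{(4\pi u)^{d/2}\,u}$ and then invoke the elementary bound $w e^{-w}\leq e^{-1}$ (this is inequality \eqref{maxim2} already recorded in the paper). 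Combining both estimates,
\begin{equation*}
\mathrm{Lip}_y\bigl(K(u,x,\cdot)\bigr)\leq \frac{C_d}{u^{d/2+1}}.
\end{equation*}

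Finally, since on the integration interval $s\in[0,t-\varepsilon]$ we have $u=t-s\geq\varepsilon$, the Lipschitz constant is uniformly bounded by $C_d\,\varepsilon^{-d/2-1}$. Putting everything together,
\begin{equation*}
\bigl|B^\varepsilon_{[f]}(t,x)-B^\varepsilon_{[g]}(t,x)\bigr|\leq \int_0^{t-\varepsilon}\frac{C_d}{(t-s)^{d/2+1}}\,\mathcal{W}_1(f_s,g_s)\,ds \leq \frac{C_d}{\varepsilon^{d/2+1}}\int_0^{t-\varepsilon}\mathcal{W}_1(f_s,g_s)\,ds,
\end{equation*}
uniformly in $x$, which is exactly \eqref{l5este}. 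The main (mild) technical step is the gradient bound above; the rest is a routine application of Kantorovich--Rubinstein duality and the cutoff $u\geq\varepsilon$. No control on the singularity $u\to 0$ is needed, which is precisely why the $\varepsilon$-regularization was introduced.
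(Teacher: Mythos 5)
Your proof is correct and essentially the same as the paper's. The only cosmetic difference is that you invoke the Kantorovich--Rubinstein dual formulation of $\mathcal{W}_1$ (bounding against the Lipschitz constant of the kernel), whereas the paper works with the primal optimal transport plan $\pi_s$ and applies the mean value theorem, but these are two phrasings of the identical estimate: your $\nabla_y K$ bound, including the split into the two bracketed terms and the use of $we^{-w}\leq e^{-1}$ (the paper's \eqref{maxim2}), reproduces the paper's computation term for term.
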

\begin{proof}
For any $x\in \mathbb{R}^{d}$, we have
\begin{eqnarray*}
&&|B^\varepsilon_{[f]}(t,x)-B^\varepsilon_{[g]}(t,x)|\\
&\leq&\big|\int_0^{t-\varepsilon} \int_{\mathbb{R}^{d}}\frac{e^{-\frac{|x-y|^2}{4 (t-s)} +\lambda s } }   {\big(4\pi(t-s)\big)^{\frac{d}{2}}}  \frac{y-x}{2(t-s)}df_s(y)ds-\int_0^{t-\varepsilon} \int_{\mathbb{R}^{d}}\frac{e^{-\frac{|x-y|^2}{4 (t-s)} +\lambda s } }   {\big(4\pi(t-s)\big)^{\frac{d}{2}}} \frac{y-x}{2(t-s)}dg_s(y)ds\big|\\
&=:& I.
\end{eqnarray*}

Let $\pi_t$ be an optimal transportation plan between the measures $f_t$ and $g_t$. Then by the mean value theorem  for integral, we have the following estimate
\begin{eqnarray*}
I&=&\Big|\int_0^{t-\varepsilon} \int_{\mathbb{R}^{2d}}\frac{e^{-\frac{|x-y|^2}{4 (t-s)} +\lambda s } }   {\big(4\pi(t-s)\big)^{\frac{d}{2}}}  \frac{y-x}{2(t-s)}d\pi_s(y,z)ds\\
&&-\int_0^{t-\varepsilon} \int_{\mathbb{R}^{2d}}\frac{e^{-\frac{|x-z|^2}{4 (t-s)} +\lambda s } }   {\big(4\pi(t-s)\big)^{\frac{d}{2}}}  \frac{z-x}{2(t-s)}d\pi_s(y,z)ds\Big|\\
&\leq&C_d\Big(\int_0^{t-\varepsilon} \int_{\mathbb{R}^{2d}}\int_0^1\frac{|x-(\theta y+(1-\theta)z)|^2 e^{-\frac{|x-(\theta y+(1-\theta)z)|^2}{4 (t-s)} } }   {(t-s)^{\frac{d}{2}+2}}d\theta |y-z|d\pi_s(y,z)ds\\
&&+\int_0^{t-\varepsilon} \int_{\mathbb{R}^{2d}}\int_0^1\frac{e^{-\frac{|x-(\theta y+(1-\theta)z)|^2}{4 (t-s)}  } }   {(t-s)^{\frac{d}{2}+1}} d\theta|y-z|d\pi_s(y,z)ds\Big)\\
&\leq&C_d\Big(\int_0^{t-\varepsilon} \int_{\mathbb{R}^{2d}}\frac{|y-z|}   {(t-s)^{\frac{d}{2}+1}} d\pi_s(y,z)ds+\int_0^{t-\varepsilon} \int_{\mathbb{R}^{2d}}\frac{ |y-z| }   {(t-s)^{\frac{d}{2}+1}}d\pi_s(y,z)ds\Big)\\
&\leq&C_d\big( \frac{1}{\varepsilon^{\frac{d}{2}+1}} \big) \int_0^{t-\varepsilon}\mathcal{W}_1(f_s,g_s) ds=O(\frac{1}{\varepsilon^{\frac{d}{2}+1}})\int_0^{t-\varepsilon}\mathcal{W}_1(f_s,g_s) ds,
\end{eqnarray*}
which finishes the proof of \eqref{l5este}.
\end{proof}

Next, we use the standard contraction argument and fixed point theorem to obtain the existence and uniqueness of strong solution to \eqref{sto1equ}.
\begin{thm}\label{existunim}
Assume that $\nabla c_0\in W^{1,\infty}$.
For any Brownian motion ${B} _t$ and initial datum $X_0$ satisfying $\mathbb{E}[|X_0|]< +\infty$,  there exists a unique global strong solution to \eqref{sto1equ}.
\end{thm}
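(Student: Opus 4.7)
The plan is the classical Banach fixed-point approach for McKean--Vlasov type SDEs. The idea is to decouple the nonlinear dependence on the law: for each $f \in L^\infty(0,T; \mathcal{P}_1(\mathbb{R}^d))$, solve the ordinary (linear) SDE with frozen drift $B^\varepsilon_{[f]}(t,x)$, read off the law of its solution, and then seek a fixed point of this mapping in the complete metric space $(L^\infty(0,T; \mathcal{P}_1(\mathbb{R}^d)), \mathcal{M}_T)$ provided by Proposition \ref{timedep}.

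\textbf{Step 1 (the frozen SDE).} Fix $f$. By Lemma \ref{Hlp}, $B^\varepsilon_{[f]}(t,x)$ is jointly continuous, uniformly bounded by $O(\varepsilon^{-(d-1)/2})$, and globally Lipschitz in $x$ with constant $L^\varepsilon = O(\varepsilon^{-d/2})$. Standard SDE theory (or the same Picard iteration as in the proof of Theorem \ref{S}, now much easier since the drift is a deterministic function of $(t,x)$) yields a unique strong solution $X^f_t$ to
\[
X^f_t = X_0 + \int_0^t B^\varepsilon_{[f]}(s, X^f_s)\, ds + \sqrt{2}\, B_t,
\]
and the boundedness of the drift together with $\mathbb{E}|X_0|<\infty$ gives $\sup_{t\le T}\mathbb{E}|X^f_t|<\infty$. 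Hence $\Phi(f)_t := \mathrm{Law}(X^f_t)$ defines a map $\Phi$ on $L^\infty(0,T; \mathcal{P}_1(\mathbb{R}^d))$.

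\textbf{Step 2 (contraction estimate).} Given $f,g$, couple $X^f$ and $X^g$ via the same initial datum $X_0$ and Brownian motion $B_t$. Adding and subtracting $B^\varepsilon_{[f]}(s, X^g_s)$ and applying Lemma \ref{Hlp}(3) and Lemma \ref{l5}, one gets
\[
\mathbb{E}|X^f_t - X^g_t| \le L^\varepsilon \int_0^t \mathbb{E}|X^f_s - X^g_s|\, ds + C_\varepsilon \int_0^t \int_0^{s-\varepsilon} \mathcal{W}_1(f_r, g_r)\, dr\, ds,
\]
where $C_\varepsilon = O(\varepsilon^{-(d/2+1)})$. Gronwall absorbs the first term, and since $\mathcal{W}_1(\Phi(f)_t, \Phi(g)_t) \le \mathbb{E}|X^f_t - X^g_t|$, one obtains a linear-in-time estimate of the form
\[
\mathcal{M}_t(\Phi(f), \Phi(g)) \le K_\varepsilon \int_0^t \mathcal{M}_s(f,g)\, ds, \qquad 0 \le t \le T.
\]

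\textbf{Step 3 (fixed point and conclusion).} Iterating this inequality gives
\[
\mathcal{M}_T(\Phi^n f, \Phi^n g) \le \frac{(K_\varepsilon T)^n}{n!}\, \mathcal{M}_T(f,g),
\]
so for $n$ large enough $\Phi^n$ is a strict contraction on the complete metric space of Proposition \ref{timedep}. Banach's fixed-point theorem produces a unique $f^{i,\varepsilon} \in L^\infty(0,T; \mathcal{P}_1(\mathbb{R}^d))$ with $\Phi(f^{i,\varepsilon}) = f^{i,\varepsilon}$, and the corresponding solution $X^{f^{i,\varepsilon}}$ is then the unique strong solution of \eqref{sto1equ}. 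Pathwise uniqueness among all solutions follows from Step 2 with $f=g=f^{i,\varepsilon}$.

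The main obstacle is the bookkeeping in Step 2: one has to track both the $x$-Lipschitz contribution (which forces a Gronwall step in time) and the measure contribution (which couples the estimate back to $\mathcal{W}_1(f,g)$), and one must verify that $\Phi$ genuinely preserves the first-moment integrability uniformly in $t$. The constants $L^\varepsilon, C_\varepsilon, K_\varepsilon$ blow up polynomially as $\varepsilon \to 0$, but since $\varepsilon>0$ is fixed here this is harmless; the sharp $\varepsilon$-dependence will only be relevant later when coupling $\varepsilon$ to $N$.
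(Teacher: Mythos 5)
Your proposal follows the same strategy as the paper: decouple the nonlinearity by freezing the law, solve the linear SDE using the boundedness and Lipschitz continuity of $B^\varepsilon_{[f]}$ from Lemma \ref{Hlp}, and set up a Banach fixed point in $(L^\infty(0,T;\mathcal{P}_1(\mathbb{R}^d)), \mathcal{M}_T)$ using the $\mathcal{W}_1$-stability estimate of Lemma \ref{l5}. The one place where you genuinely differ is Step 3. The paper proves a contraction estimate of the form $\mathcal{M}_t(\mathcal{S}f^1,\mathcal{S}f^2)\le L^\varepsilon t^2 e^{L^\varepsilon t}\mathcal{M}_t(f^1,f^2)$, chooses $T$ small so the prefactor is below one, and then builds a global solution by restarting on $[T,2T],[2T,3T],\dots$, which forces a separate argument that the first moment of $f_t$ cannot blow up in finite time (their Step 3). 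You instead note the Gronwall-type integral form $\mathcal{M}_t(\Phi f,\Phi g)\le K_\varepsilon\int_0^t\mathcal{M}_s(f,g)\,ds$, iterate to get $\mathcal{M}_T(\Phi^n f,\Phi^n g)\le \frac{(K_\varepsilon T)^n}{n!}\mathcal{M}_T(f,g)$, and apply Banach to the high iterate $\Phi^n$ directly on the full interval $[0,T]$. This is a cleaner packaging: the bounded-drift estimate that shows $\Phi$ is well defined already gives the uniform-in-$t$ first-moment bound, so no separate no-blowup argument is needed. Both routes are standard and both are correct. One minor wording caveat at the end: coupling with $f=g=f^{i,\varepsilon}$ gives pathwise uniqueness only once the law is known to be unique; the clean statement is that uniqueness of the fixed point of $\Phi$ pins down the time marginals, and then pathwise uniqueness of the frozen Lipschitz SDE gives uniqueness of the strong solution.
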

\begin{proof}
{\bf Step 1.} Take $T>0$ and define a map $$\mathcal{S}: \big(L^\infty\big(0,T; \mathcal{P}_1(\mathbb{R}^{d})\big),  \mathcal {M}_T \big) \rightarrow \big(L^\infty\big(0,T; \mathcal{P}_1(\mathbb{R}^{d})\big),  \mathcal {M}_T \big)$$ as follows. For any $f\in L^\infty\big(0,T; \mathcal{P}_1(\mathbb{R}^{d})\big) $, define the following linear SDE:
\begin{eqnarray}\label{sto1equlineared}
\bar{X}_t^{\varepsilon} =X_0+ \int_0^t B^\varepsilon_{[f]}(s,\bar{X}_s^{\varepsilon})ds+\sqrt{2}B_t, ~~\hspace{0.2cm} 0\leq t \leq T.
\end{eqnarray}
By Lemma \ref{Hlp}, one knows that $B^\varepsilon_{[f]}(t,x)$ is bounded and Lipschitz with respect to $x$ for any  function $f\in L^\infty\big(0,T; \mathcal{P}_1(\mathbb{R}^{d})\big)$. Hence there exists a unique strong solution to \eqref{sto1equlineared}.

Let $g$ be the time marginals of $\bar{X}_t^\varepsilon$ and $\mathcal{S} f=: g $.  By Lemma \ref{Hlp}, one knows that $B^\varepsilon_{[f]}(s,x)$ is bounded by a constant $C$, therefore
 \begin{eqnarray}
\sup\limits_{t\in[0,T]} \mathbb{E}[|\bar{X}_t^{\varepsilon}|]\leq\mathbb{E}[|X_0|]+CT+\sqrt{2}\sup\limits_{t\in[0,T]} \mathbb{E}[|B_t|]< +\infty,
 \end{eqnarray}
 i.e. $g\in L^\infty\big(0,T; \mathcal{P}_1(\mathbb{R}^{d})\big)$ and $\mathcal{S}$ is well defined.

{\bf Step 2.} We prove that $\mathcal{S}$ is a contraction for small $T>0$.

For any two functions $f^1, f^2\in L^\infty\big(0,T; \mathcal{P}_1(\mathbb{R}^{d})\big)$, solve the following  two linear SDEs:
\begin{eqnarray}
\bar{X}_t^{i,\varepsilon} =X_0+ \int_0^t B^\varepsilon_{[f^i]}(s,\bar{X}_s^{i,\varepsilon})ds+\sqrt{2}B_t, ~~\hspace{0.1cm} 0\leq t \leq T,~\hspace{0.1cm} i=1, 2.
\end{eqnarray}
Then one has
\begin{eqnarray}\label{fixedp}
\sup\limits_{s\leq t}|\bar{X}_s^{1,\varepsilon}-\bar{X}_s^{2,\varepsilon}| \leq \int_0^t \big|B^\varepsilon_{[f^1]}(s,\bar{X}_s^{1,\varepsilon}) -B^\varepsilon_{[f^2]}(s,\bar{X}_s^{2,\varepsilon})\big|ds.
\end{eqnarray}
By Lemma \ref{Hlp} and Lemma \ref{l5}, one also has
\begin{eqnarray}\label{fixedp11} \nonumber
|B^\varepsilon_{[f^1]}(s,\bar{X}_s^{1,\varepsilon}) -B^\varepsilon_{[f^2]}(s,\bar{X}_s^{2,\varepsilon})\big|&\leq&|B^\varepsilon_{[f^1]}(s,\bar{X}_s^{1,\varepsilon}) -B^\varepsilon_{[f^1]}(s,\bar{X}_s^{2,\varepsilon})\big|\\ \nonumber
&&+|B^\varepsilon_{[f^1]}(s,\bar{X}_s^{2,\varepsilon}) -B^\varepsilon_{[f^2]}(s,\bar{X}_s^{2,\varepsilon})\big|\\
&\leq& L^\varepsilon\big(|\bar{X}_s^{1,\varepsilon}-\bar{X}_s^{2,\varepsilon}|+ \int_0^{s-\varepsilon}\mathcal{W}_1(f^1_\tau,f^2_\tau) d\tau\big).
\end{eqnarray}
Plugging \eqref{fixedp11} into \eqref{fixedp} and using Gronwall's lemma, we have
\begin{eqnarray}\label{fixedp22}
\sup\limits_{s\leq t}|\bar{X}_s^{1,\varepsilon}-\bar{X}_s^{2,\varepsilon}| \leq L^\varepsilon e^{L^\varepsilon t} \int_0^t \int_0^{s-\varepsilon}\mathcal{W}_1(f^1_\tau,f^2_\tau) d\tau ds.
\end{eqnarray}
Then taking expectation of \eqref{fixedp22}, one has
\begin{eqnarray}\label{fixedp33}
 \mathcal {M}_t (\mathcal{S}(f^1), \mathcal{S}(f^2))
 \leq L^\varepsilon t^2 e^{L^\varepsilon t}  \mathcal {M}_t(f^1,f^2).
\end{eqnarray}
Now we can choose $T>0$ such that $\mathcal{S}$ is a contraction on $[0,T]$.

{\bf Step 3.} A priori estimates and global well-posedness.

From Step 2, one can obtain a unique fixed point of $\mathcal{S}$ within time interval $[0,T]$. We denote it by $f\in L^\infty\big(0,T; \mathcal{P}_1(\mathbb{R}^{d})\big)$. Then $\bar{X}_t^{\varepsilon}$ defined by \eqref{sto1equlineared} with this $f$ is a local solution to \eqref{sto1equ}. In Step 1, since $f_T\in \mathcal{P}_1(\mathbb{R}^d)$,  one can take $T$  as a new initial time and repeat the previous process, by notice that the constant $L^\varepsilon$ in \eqref{fixedp33} is uniform in time, to show that the model \eqref{sto1equ} has a unique weak solution in $t\in [T, 2T]$. One can continue this process and obtain a unique global solution in $[0, T^*)$, until the first moment of $f\in L^\infty\big([0,T^*);\mathcal{P}_1(\mathbb{R}^d)\big)$ goes to $\infty$ when $t\rightarrow T^*-0$.

Next, we will show that the blow up of the first moment won't  happen in finite time,   which means that $T^*=+\infty$. If not, we suppose that there exists a finite $T^*$ such that
\begin{eqnarray}\label{contra}
\lim\limits_{t\uparrow  T^*}\int_{\mathbb{R}^{d}}|x|df_t(x)=\infty.
\end{eqnarray}
Let $\bar{X}_t^{\varepsilon}$ satisfies
\begin{eqnarray}
\bar{X}_t^{\varepsilon} =X_0+ \int_0^tB^\varepsilon_{[f]}(s,\bar{X}_s^{\varepsilon})ds+\sqrt{2}B_t, ~~\hspace{0.2cm} t\geq 0,\hspace{0.2cm} 1\leq i \leq N,
\end{eqnarray}
where $f\in L^\infty\big(0,T; \mathcal{P}_1(\mathbb{R}^{d})\big)$ is it's time marginals in $0\leq t\leq T< T^*$. By Lemma \ref{Hlp},
$$
|B^\varepsilon_{[f]}(t,x)|\leq O(\frac{1}{{\varepsilon ^{\frac{d-1}{2}}}}), \mbox{~for all~} (t,x)\in [0,T]\times \mathbb{R}^{d}.
$$
Therefore,  one has the following uniform estimate for the first moment in time
\begin{eqnarray}
\int_{\mathbb{R}^{d}}|x|df_t(x)=\mathbb{E}[|\bar{X}_t^{\varepsilon}|]\leq \mathbb{E}[|X_0|]+O(\frac{1}{{\varepsilon ^{\frac{d-1}{2}}}})t+\sqrt{2t}\leq C(T^*)< \infty,  \forall~ t\in (0, T^*),
\end{eqnarray}
which contradicts \eqref{contra}.
\end{proof}

\subsection{A trajectorial estimate between \eqref{interparODES} and \eqref{sto1equ}}
\begin{prop}\label{P}
Suppose $\{({X}_t^{i,\varepsilon})_{t\geq0}\}^N_{i=1}$ and $\{(\bar{X}_t^{i,\varepsilon})_{t\geq0}\}^N_{i=1}$ are the unique strong solutions to \eqref{interparODES} and \eqref{sto1equ} respectively,  with the same i.i.d. initial data $\{{X}^{i} _0\}^N_{i=1}$ and Brownian motions $\{({B}^{i} _t)_{t\geq0}\}^N_{i=1}$. Then for any $\varepsilon>0, 1\leq i\leq N$ and $T>0$, one has
\begin{eqnarray}\label{reguesti}
\mathbb{E}\big[\sup\limits_{t\in[0,T]}|{X}_t^{i,\varepsilon}-\bar{{X}}_t^{i,\varepsilon}|\big]\leq\frac{C}{{\sqrt {N}\varepsilon^{\frac{d}{2}}}}\exp\big({\frac{C}{\varepsilon^{\frac{d}{2}+1}}}\big),
\end{eqnarray}
where $C$ is a constant depending only on $T$, $d$ and $\|D^2c_0\|_\infty$.
\end{prop}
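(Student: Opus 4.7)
The plan is a classical trajectorial coupling argument. Since $X^{i,\varepsilon}_t$ and $\bar X^{i,\varepsilon}_t$ share the same initial datum $X^i_0$ and Brownian motion $B^i_t$, subtracting \eqref{interparODES} from \eqref{sto1equ} eliminates the stochastic integral and reduces the estimate to controlling a time integral of drift differences. Write $a^i_s$ for the full particle drift appearing in \eqref{interparODES}, $\bar a^i_s=B^\varepsilon_{[f^{i,\varepsilon}_s]}(s,\bar X^{i,\varepsilon}_s)$ for the mean-field drift of \eqref{sto1equ}, and introduce the hybrid drift
\[
\tilde a^i_s=\frac1N\sum_{j=1}^N\int_0^{s-\varepsilon}b^\varepsilon\bigl(s,r,\bar X^{i,\varepsilon}_s,\bar X^{j,\varepsilon}_r\bigr)\,dr+e^{-\lambda s}\!\int_{\mathbb{R}^d}\frac{e^{-|\bar X^{i,\varepsilon}_s-y|^2/(4s)}}{(4\pi s)^{d/2}}\nabla c_0(y)\,dy,
\]
obtained by replacing every $X^{j,\varepsilon}$ by $\bar X^{j,\varepsilon}$ while keeping the empirical average. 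The drift difference then splits as $a^i_s-\bar a^i_s=(a^i_s-\tilde a^i_s)+(\tilde a^i_s-\bar a^i_s)$, a coupling (Lipschitz) error plus a law-of-large-numbers (LLN) fluctuation error.

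For the Lipschitz piece, recycle the pointwise bounds already established in the proofs of Theorem \ref{S} and Lemma \ref{Hlp}: the $x$- and $y$-derivatives of $b^\varepsilon(s,r,\cdot,\cdot)$ and the spatial derivative of the heat-kernel term against $\nabla c_0$ are controlled by $C/(s-r)^{d/2+1}$, and the cut-off $r\le s-\varepsilon$ gives the crude inequality $1/(s-r)^{d/2+1}\le 1/\varepsilon^{d/2+1}$. This yields
\[
|a^i_s-\tilde a^i_s|\le\frac{C}{\varepsilon^{d/2+1}}\Bigl(|X^{i,\varepsilon}_s-\bar X^{i,\varepsilon}_s|+\frac1N\sum_{j=1}^N\int_0^{s-\varepsilon}|X^{j,\varepsilon}_r-\bar X^{j,\varepsilon}_r|\,dr\Bigr)+\|D^2c_0\|_\infty|X^{i,\varepsilon}_s-\bar X^{i,\varepsilon}_s|.
\]
For the LLN piece, exploit that by Theorem \ref{existunim} together with the i.i.d.\ choice of $\{X^i_0\}$ and $\{B^i_t\}$ the self-consistent processes $(\bar X^{j,\varepsilon}_\cdot)_{j=1}^N$ are genuinely i.i.d., with common marginals $f^{i,\varepsilon}_r$. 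Conditioning on $(\bar X^{i,\varepsilon}_\cdot)$, the random variables $\psi_j:=\int_0^{s-\varepsilon}b^\varepsilon(s,r,\bar X^{i,\varepsilon}_s,\bar X^{j,\varepsilon}_r)\,dr$ are, for $j\ne i$, i.i.d.\ with conditional mean equal to the nonlocal part of $\bar a^i_s$. A conditional $L^2$ variance bound based on the pointwise estimate $|b^\varepsilon(s,r,x,y)|\le C/(s-r)^{(d+1)/2}$, hence on a deterministic bound on $|\psi_j|$, then yields $\mathbb E|\tilde a^i_s-\bar a^i_s|\le C/(\sqrt N\varepsilon^{d/2})$; the diagonal $j=i$ contribution, where $\bar X^{i,\varepsilon}_r$ fails to be independent of $\bar X^{i,\varepsilon}_s$, is isolated and treated separately, producing a lower-order $O(1/(N\varepsilon^{d/2}))$ correction.

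Setting $u(t):=\mathbb E[\sup_{s\le t}|X^{i,\varepsilon}_s-\bar X^{i,\varepsilon}_s|]$, which is independent of $i$ by symmetry, integrating in time and combining the two bounds yields
\[
u(T)\le\frac{CT}{\sqrt N\varepsilon^{d/2}}+\frac{CT}{\varepsilon^{d/2+1}}\int_0^T u(s)\,ds,
\]
after which Gronwall's inequality delivers the claimed exponential estimate. The main obstacle is the LLN step: one must exploit carefully that the \emph{intermediate} processes $\bar X^{j,\varepsilon}$ (not the interacting particles $X^{j,\varepsilon}$) are i.i.d., isolate the diagonal term where this independence breaks, and verify that the fluctuation estimate $1/(\sqrt N\varepsilon^{d/2})$ is strong enough to survive the Gronwall amplification factor $\exp(C/\varepsilon^{d/2+1})$ generated by the singular Lipschitz constant of the cut-off drift.
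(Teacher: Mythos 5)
Your proposal is correct and follows essentially the same route as the paper: the hybrid drift $\tilde a^i_s$ is precisely what the paper denotes $b^i(s,\bar{\mathbf X}^\varepsilon_s)$, the Lipschitz piece matches the paper's estimate \eqref{bLips}, the LLN piece is the paper's orthogonality argument $\mathbb{E}[A_j^i A_k^i]=0$ ($j\neq k$) exploiting that the intermediate processes $\{\bar X^{j,\varepsilon}\}$ are i.i.d., and the Gronwall closure is identical. The only (mild) refinement you offer is to explicitly isolate the diagonal $j=i$ term where conditional independence fails, whereas the paper folds it into the crude bound $\frac{1}{N^2}\sum_j\mathbb{E}[(A_j^i)^2]\le\frac{1}{N}\mathbb{E}[(A_2^1)^2]$; both treatments produce the same order in $N$ and $\varepsilon$, so the final estimate is unchanged.
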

\begin{proof}
For any path ${\bf{X}}_t=\{X^{1}_t, X^{2}_t,\cdots, X^{N}_t\}_{t\geq 0}$, define the following two functions:
\begin{eqnarray}
b^i(s,{\bf{X}}_s)=\frac{1}{N}\sum\limits_{j=1}^N\int_0^{s-\varepsilon} \frac{e^{-\frac{|X^i_s-X^j_r|^2}{4 (s-r)} +\lambda (r-s) } }   {\big(4\pi(s-r)\big)^{\frac{d}{2}}}  \frac{X^j_r-X^i_s}{2(s-r)}dr, \mbox{~for~} 1\leq i\leq N
\end{eqnarray}
and
\begin{eqnarray}
\bar {b}^i(s,{\bf{X}}_s)=\int_0^{s-\varepsilon}\int_{\mathbb{R}^{d}} \frac{e^{-\frac{|{X}_s^{i}-y|^2}{4 (s-r)} +\lambda (r-s) } }   {\big(4\pi(s-r)\big)^{\frac{d}{2}}} \frac{y-{X}_s^{i}}{2(s-r)}df^i_r(y)dr,  \mbox{~for~} 1\leq i\leq N,
\end{eqnarray}
 where $f^i_t(x)$ is the time marginals  of $({X}_t^{i})_{t\geq0}$.

Define
${\bf{X}}_t^\varepsilon:=\{X^{1,\varepsilon}_t, X^{2,\varepsilon}_t,\cdots, X^{N,\varepsilon}_t\}$ and ${\bf\bar{X}}^\varepsilon_t:=\{\bar{X}^{1,\varepsilon}_t, \bar{X}^{2,\varepsilon}_t,\cdots, \bar{X}^{N,\varepsilon}_t\}$.
Following the spirit of \cite{S1991} and then comparing the paths between \eqref{interparODES} and \eqref{sto1equ}, one has
\begin{eqnarray}\label{zongest1} \nonumber
&&\big|{X}_t^{i,\varepsilon}-\bar{{X}}_t^{i,\varepsilon}\big|\\
&\leq&\int_0^t\big| b^i(s,{\bf{X}}_s^\varepsilon)-\bar {b}^i(s,{\bf\bar{X}}^\varepsilon_s)\big|ds \\ \nonumber
&&+\big|\int_0^te^{-\lambda s}\int_{\mathbb{R}^{d}}\frac{e^{-\frac{|{X}_s^{i,\varepsilon}-y|^2}{4s} } }   {(4\pi s)^{\frac{d}{2}}}\nabla c_0(y)dyds-\int_0^te^{-\lambda s}\int_{\mathbb{R}^{d}}\frac{e^{-\frac{|\bar{X}_s^{i,\varepsilon}-y|^2}{4s} } }   {(4\pi s)^{\frac{d}{2}}}\nabla c_0(y)dyds\big|\\ \nonumber
&\leq&\int_0^t\big| b^i(s,{\bf{X}}_s^\varepsilon)-{b}^i(s,{\bf\bar{X}}^\varepsilon_s)\big|ds+
\int_0^t\big| b^i(s,{\bf\bar {X}}_s^\varepsilon)-\bar {b}^i(s,{\bf\bar{X}}^\varepsilon_s)\big|ds \\
&&+\|D^2c_0\|_\infty\int_0^te^{-\lambda s}\int_{\mathbb{R}^{d}}\frac{e^{-\frac{|y|^2}{4s} } }   {(4\pi s)^{\frac{d}{2}}}dy |{X}_s^{i,\varepsilon}-\bar{X}_s^{i,\varepsilon}|ds.
\end{eqnarray}
A simple computation  shows that
  \begin{eqnarray}\label{bLips}\nonumber
\big| b^i(s,{\bf{X}}_s^\varepsilon)-{b}^i(s,{\bf\bar{X}}^\varepsilon_s)\big|&\leq& \frac{C}{N}\sum\limits_{j=1}^N\int_0^{s-\varepsilon} \frac{|X_s^{i,\varepsilon}-\bar{X}_s^{i,\varepsilon}|+|X_r^{j,\varepsilon}-\bar{X}_r^{j,\varepsilon}|}{(s-r)^{\frac{d}{2}+1}}dr\\
&\leq& \frac{C}{N\varepsilon^{\frac{d}{2}+1}}\sum\limits_{j=1}^N\int_0^{s-\varepsilon} \big(|X_s^{i,\varepsilon}-\bar{X}_s^{i,\varepsilon}|+|X_r^{j,\varepsilon}-\bar{X}_r^{j,\varepsilon}|\big)dr,
\end{eqnarray}
where $C$ only depends on $d$.

Plugging \eqref{bLips} into \eqref{zongest1},  for $\varepsilon \ll 1$, one has
\begin{eqnarray}\label{zongpathest}\nonumber
\big|{X}_t^{i,\varepsilon}-\bar{{X}}_t^{i,\varepsilon}\big|
&\leq&\frac{C}{\varepsilon^{\frac{d}{2}+1}}\int_0^t\big(\frac{1}{N}\sum\limits_{j=1}^N\int_0^{s-\varepsilon} |X_r^{j,\varepsilon}-\bar{X}_r^{j,\varepsilon}|dr+|X_s^{i,\varepsilon}-\bar{X}_s^{i,\varepsilon}|\big)ds\\
&&+\int_0^t\big| b^i(s,{\bf\bar {X}}_s^\varepsilon)-\bar {b}^i(s,{\bf\bar{X}}^\varepsilon_s)\big|ds,
\end{eqnarray}
where $C$ only depends on $d$, $T$ and $\|D^2c_0\|_\infty$.

 From \eqref{zongpathest}, for any $t\in [0, T]$, one has
 \begin{eqnarray}\label{zongpathest11}\nonumber
\sup\limits_{s\in[0,t]}|{X}^{i,\varepsilon}_s-\bar{{X}}^{i,\varepsilon}_s|&\le&
\frac{C}{\varepsilon^{\frac{d}{2}+1}}\int_0^t\big(\frac{1}{N}\sum\limits_{j=1}^N\int_0^{s-\varepsilon} |X_r^{j,\varepsilon}-\bar{X}_r^{j,\varepsilon}|dr+|X_s^{i,\varepsilon}-\bar{X}_s^{i,\varepsilon}|\big)ds\\ \nonumber
&&+\int_0^t\big| b^i(s,{\bf\bar {X}}_s^\varepsilon)-\bar {b}^i(s,{\bf\bar{X}}^\varepsilon_s)\big|ds\\ \nonumber
&\le&
\frac{C}{\varepsilon^{\frac{d}{2}+1}}\int_0^t\big(\frac{1}{N}\sum\limits_{j=1}^N \sup\limits_{\tau\in[0,s]} |X_\tau^{j,\varepsilon}-\bar{X}_\tau^{j,\varepsilon}|+\sup\limits_{\tau\in[0,s]}|X_{\tau}^{i,\varepsilon}-\bar{X}_{\tau}^{i,\varepsilon}|\big)ds\\
&&+\int_0^t\big| b^i(s,{\bf\bar {X}}_s^\varepsilon)-\bar {b}^i(s,{\bf\bar{X}}^\varepsilon_s)\big|ds,
\end{eqnarray}
where $C$ only depends on $d$, $T$ and $\|D^2c_0\|_\infty$.

Denote by $m_{N+1}(\omega^1,\cdots,\omega^N, y)\in  \mathcal{P}(\mathcal{C}^{N+1}) $ ($\mathcal{C}=C([0,T],\mathbb{R}^d)$) the joint distribution of $\big({X}^{1,\varepsilon}_t,\cdots,{X}^{N,\varepsilon}_t, \bar{{X}}^{i,\varepsilon} _t \big)_{t\geq0}$ and $m_3(\omega^{i},\omega^{j},y)=\int_{\mathcal{C}^{N-2}}m_{N+1}(d\omega^1,\cdots,\omega^{i} ,\cdots,\omega^{j} ,\cdots,d\omega^N, y)$ for any $1\leq i\neq j\leq N$. Since $\{(\bar{{X}}^{i,\varepsilon} _t)_{t\geq0}\}_{i=1}^N$ are i.i.d. stochastic processes, which will be proved in Subsection 4.1, and $\{({X}^{i,\varepsilon}_t)_{t\geq0}\}^N_{i=1}$ are exchangeable stochastic processes,   then $m_3(\omega^{i},\omega^{j},y)=m_3(\omega^{j},\omega^{i},y)$ for any $1\leq i\neq j\leq N$. Furthermore,  we obtain the following exchangeability qualities: for any $t\in [0,T]$,
 \begin{eqnarray}\nonumber\label{exchange}
 &&\mathbb{E}[\sup\limits_{s\in[0,t]}|{X}^{i,\varepsilon}_s-\bar{{X}}^{i,\varepsilon} _s|]=\int_{\mathcal{C}^3}\sup\limits_{s\in[0,t]}|\omega^{i}_s-y_s| dm_3(\omega^{i},\omega^{j},y)\\ \nonumber
&=&\int_{\mathcal{C}^3}\sup\limits_{s\in[0,t]}|\omega^{i}_s-y_s| dm_3(\omega^{j},\omega^{i},y)
=\int_{\mathcal{C}^3}\sup\limits_{s\in[0,t]}|\omega^{j}_s-y_s| dm_3(\omega^{i},\omega^{j},y)\\
 &=&\mathbb{E}[\sup\limits_{s \in[0,t]}|{X}^{j,\varepsilon}_s-\bar{{X}}^{j,\varepsilon} _s|].
 \end{eqnarray}
  Hence taking expectation of \eqref{zongpathest11}, one has
\begin{eqnarray}
&&\mathbb{E}\big[\sup\limits_{s\in[0,t]}|{X}^{i,\varepsilon}_s-\bar{{X}}^{i,\varepsilon}_s|\big]\nonumber\\
&\leq&\frac{C}{\varepsilon^{\frac{d}{2}+1}}\int_0^t \mathbb{E}\big[\sup\limits_{\tau \in[0,s]}|{X}^{i,\varepsilon}_\tau-\bar{{X}}^{i,\varepsilon}_\tau|\big]ds+\int_0^t \mathbb{E}\big[\big| b^i(s,{\bf\bar {X}}_s^\varepsilon)-\bar {b}^i(s,{\bf\bar{X}}^\varepsilon_s)\big|\big]ds.
\end{eqnarray}
where $C$ only depends on $d$, $T$ and $\|D^2c_0\|_\infty$. Applying the Gronwall's Lemma, we have
\begin{eqnarray}\label{Stoest1}\nonumber
\mathbb{E}\big[\sup\limits_{s\in[0,t]}\big|{X}^{i,\varepsilon}_s-\bar{{X}}^{i,\varepsilon}_s\big|\big]
&\leq& \exp\big(\frac{Ct}{\varepsilon^{\frac{d}{2}+1}}\big)\int_0^t \mathbb{E}\big[\big| b^i(s,{\bf\bar {X}}_s^\varepsilon)-\bar {b}^i(s,{\bf\bar{X}}^\varepsilon_s)\big|\big]ds\\
&\leq& \exp\big(\frac{Ct}{\varepsilon^{\frac{d}{2}+1}}\big)\int_0^t \sqrt{\mathbb{E}\big[\big| b^i(s,{\bf\bar {X}}_s^\varepsilon)-\bar {b}^i(s,{\bf\bar{X}}^\varepsilon_s)\big|^2\big] }ds.
\end{eqnarray}
Since
\begin{eqnarray}\label{iidest}
\mathbb{E}\big[\big| b^i(s,{\bf\bar {X}}_s^\varepsilon)-\bar {b}^i(s,{\bf\bar{X}}^\varepsilon_s)\big|^2\big]&=&  \mathbb{E}\big[|\frac{1}{N}\sum\limits_{j=1 }^N A^i_j(s)|^2\big],
\end{eqnarray}
where
\begin{eqnarray}\nonumber
A_j^i(s)&:=&\int_0^{s-\varepsilon} \frac{e^{-\frac{|\bar{X}^{i,\varepsilon}_s-\bar{X}^{j,\varepsilon}_r|^2}{4 (s-r)} +\lambda (r-s) } }   {\big(4\pi(s-r)\big)^{\frac{d}{2}}}  \frac{\bar{X}^{j,\varepsilon}_r-\bar{X}^{i,\varepsilon}_s}{2(s-r)}dr\\
&&- \int_0^{s-\varepsilon}\int_{\mathbb{R}^{d}} \frac{e^{-\frac{|\bar{X}_s^{i,\varepsilon}-y|^2}{4 (s-r)} +\lambda (r-s) } }   {\big(4\pi(s-r)\big)^{\frac{d}{2}}}\frac{y-\bar{X}_s^{i,\varepsilon}}{2(s-r)}df^{i,\varepsilon}_r(y)dr.
\end{eqnarray}

Because $\{(\bar{{X}}^{i,\varepsilon} _t)_{t\geq0}\}_{i=1}^N$ are i.i.d. random variables,  when $j\neq k$, one has
\begin{eqnarray*}
\mathbb{E}\big[A_j^i(s)A_k^i(s)\big]=0.
\end{eqnarray*}
Hence
\begin{eqnarray}\label{Stoest2}
\mathbb{E}\big[\big| b^i(s,{\bf\bar {X}}_s^\varepsilon)-\bar {b}^i(s,{\bf\bar{X}}^\varepsilon_s)\big|^2\big]=\frac{1}{N^2}\mathbb{E}\big[\sum\limits_{ \scriptstyle  j=1 }^N A_j^i(s)A_j^i(s)\big]\leq \frac{\mathbb{E}\big[(A_2^1(s))^2\big]}{N}.
\end{eqnarray}
Recalling that $\{(\bar{{X}}^{i,\varepsilon} _t)_{t\geq0}\}_{i=1}^N$ are i.i.d. random variables, one has
\begin{eqnarray}\label{Stoest3}\nonumber
\mathbb{E}\big[(A_2^1(s))^2\big]&=&\mathbb{E}\big[\big(\int_0^{s-\varepsilon} \frac{e^{-\frac{|\bar{X}^{1,\varepsilon}_s-\bar{X}^{2,\varepsilon}_r|^2}{4 (s-r)} +\lambda (r-s) } }   {\big(4\pi(s-r)\big)^{\frac{d}{2}}}  \frac{\bar{X}^{2,\varepsilon}_r-\bar{X}^{1,\varepsilon}_s}{2(s-r)}dr\\ \nonumber
&&- \int_0^{s-\varepsilon}\int_{\mathbb{R}^{d}} \frac{e^{-\frac{|\bar{X}_s^{1,\varepsilon}-y|^2}{4 (s-r)} +\lambda (r-s) } }   {\big(4\pi(s-r)\big)^{\frac{d}{2}}}  \frac{y-\bar{X}_s^{1,\varepsilon}}{2(s-r)}df^{1,\varepsilon}_r(y)dr\big)^2\big]\\ \nonumber
&\leq&2\mathbb{E}\big[\big(\int_0^{s-\varepsilon} \frac{e^{-\frac{|\bar{X}^{1,\varepsilon}_s-\bar{X}^{2,\varepsilon}_r|^2}{4 (s-r)} +\lambda (r-s) } }   {\big(4\pi(s-r)\big)^{\frac{d}{2}}}  \frac{\bar{X}^{2,\varepsilon}_r-\bar{X}^{1,\varepsilon}_s}{2(s-r)}dr\big)^2\\ \nonumber
&&+\big(  \int_0^{s-\varepsilon}\int_{\mathbb{R}^{d}} \frac{e^{-\frac{|\bar{X}_s^{1,\varepsilon}-y|^2}{4 (s-r)} +\lambda (r-s) } }   {\big(4\pi(s-r)\big)^{\frac{d}{2}}}  \frac{y-\bar{X}_s^{1,\varepsilon}}{2(s-r)}df^{1,\varepsilon}_r(y)dr\big)^2\big]\\ \nonumber
&=&4 \mathbb{E}\big[\big(  \int_0^{s-\varepsilon}\int_{\mathbb{R}^{d}} \frac{e^{-\frac{|\bar{X}_s^{1,\varepsilon}-y|^2}{4 (s-r)} +\lambda (r-s) } }   {\big(4\pi(s-r)\big)^{\frac{d}{2}}} \frac{y-\bar{X}_s^{1,\varepsilon}}{2(s-r)}df^{1,\varepsilon}_r(y)dr\big)^2\big]
\\ \nonumber
&\leq&  4s \int_0^{s-\varepsilon}\int_{\mathbb{R}^{2d}} \frac{e^{-\frac{|x-y|^2}{2 (s-r)}  } }   {\big(4\pi(s-r)\big)^{d}}  \frac{|y-x|^2}{4(s-r)^2}df^{1,\varepsilon}_r(y)df^{1,\varepsilon}_s(x)dr\\
&\leq&C \int_0^{s-\varepsilon}\int_{\mathbb{R}^{2d}} \frac{1}   {(s-r)^{d+1}} df^{1,\varepsilon}_r(y)df^{1,\varepsilon}_s(x)dr\leq \frac{C}{\varepsilon^d},
\end{eqnarray}
where $C$ only depends on $d$ and $T$. Notice that in the last inequality, we used the inequality \eqref{maxim2}.

Combining (\ref{Stoest1}), (\ref{Stoest2}) and (\ref{Stoest3}) together, one obtains \eqref{reguesti}.
\end{proof}

\section{Uniform estimates for the intermediate PDEs and compactness argument}
\subsection{Uniform estimates for the intermediate PDEs}
 For $\varphi\in C_b^2(\mathbb{R}^{d})$, applying It$\hat{\mathrm o}$'s formular for \eqref{sto1equ}, for any $1\leq i \leq N$, one has
\begin{eqnarray}
\varphi(\bar{X}_t^{i,\varepsilon})=\varphi(X^{i}_0)+\sqrt{2}\int_0^t \nabla \varphi(\bar{X}_s^{i,\varepsilon})\cdot dB_s^i+ \int_0^t \big(B^\varepsilon_{[f^{i,\varepsilon}]}(s,\bar{X}_s^{i,\varepsilon})\cdot \nabla \varphi(\bar{X}_s^{i,\varepsilon})+\triangle \varphi(\bar{X}_s^{i,\varepsilon})\big)ds.
\end{eqnarray}
Taking expectation of the above equation and noticing that  $f^{i,\varepsilon}$ admits a time marginal density denoted by ${\rho}^{i,\varepsilon}(t,x)$ (see \cite[Theorem 9.1.9]{SV1979}), we know that ${\rho}^{i,\varepsilon}(t,x)$ is a weak solution to  the following equation:
\begin{eqnarray}\label{intermeKSeq}
\left\{\begin{aligned}{}
&\partial_t\rho^\varepsilon-\nabla\cdot\big(\nabla \rho^\varepsilon  - \rho^\varepsilon\nabla c^\varepsilon \big)=0, 
\\
& c^\varepsilon=e^{-\lambda t} e^{t\Delta} c_0+ \int_0^{t-\varepsilon} e^{\lambda(s-t)} e^{(t-s)\Delta}\rho^\varepsilon(x,s) ds , 
\\
&\rho^\varepsilon(x,0)=\rho_0(x),  \hspace{0.5cm}  c^\varepsilon(x,0)=c_0(x). 
\end{aligned}\right.
\end{eqnarray}

It is easy to prove that for fixed $\varepsilon>0$, \eqref{intermeKSeq} has a unique global weak solution $\rho^\varepsilon$ in the class of $L^\infty\big(0,T; L^2(\mathbb{R}^{d})\big)\cap L^2\big(0,T; H^1(\mathbb{R}^{d})\big)$, and $\int_{\mathbb{R}^d}\rho^\varepsilon(t,x)dx\equiv 1$. In Proposition \ref{uniest}, we will give the uniform estimates of ${\rho}^{i,\varepsilon}(t,x)$
with the initial data satisfying the conditions \eqref{inicon1} and \eqref{inicon2}. Hence ${\rho}^{i,\varepsilon}={\rho}^{\varepsilon}$ for $i =1,\cdots, N$, which means that $\{(\bar{{X}}^{i,\varepsilon} _t)_{t\geq0}\}_{i=1}^N$ are i.i.d..

We focus on the uniform estimates of the weak solution to  \eqref{intermeKSeq} in this subsection. First, we start with the fundamental estimates of $c_\varepsilon$. Recalling that
$$
c^\varepsilon=e^{-\lambda t} e^{t\Delta} c_0+ \int_0^{t-\varepsilon} e^{\lambda(s-t)} e^{(t-s)\Delta}f(x,s) ds,
$$
 we directly cite the following two lemmas as a preparatory work.   The first lemma follows immediately from the Young's inequality for the convolution  \cite[pp. 99]{LL2001} and the well-known results on $L^p$ regularity of parabolic equation \cite[Theorem X.12]{B1983} or \cite[Chapter IV, Section 3]{LSU1968} (also see \cite{IY2012,SK2006,WLC2019}).

\begin{lem}\label{estiofpara}
Let $d\geq 1$, $1\leq q \leq p \leq \infty$, $\frac{1}{q}-\frac{1}{p}<\frac{1}{d}$. Suppose $c_0\in W^{1,p}(\mathbb{R}^d)$ and $f\in L^\infty(0,T; L^{q}(\mathbb{R}^d))$. Then for all $t>0$,
\begin{eqnarray}\label{estpara1}
\|c^\varepsilon(t) \|_{L^p(\mathbb{R}^d)}&\leq & \|c_0 \|_{L^p(\mathbb{R}^d)}+C\|f \|_{L^\infty(0,T; L^{q}(\mathbb{R}^d))};\\  \label{estpara2}
\|\nabla c^\varepsilon(t) \|_{L^p(\mathbb{R}^d)}&\leq & \|\nabla c_0 \|_{L^p(\mathbb{R}^d)}+C\|f \|_{L^\infty(0,T; L^{q}(\mathbb{R}^d))},
\end{eqnarray}
where $C$ are positive constants depending on $p, q$ and $d$.

Furthermore, let $p>1$, if $c_0\in W^{2,p}(\mathbb{R}^d)$ and $f\in L^p(\Omega_T)$, one has
\begin{eqnarray}\label{estpara3}
\|c^\varepsilon \|_{W^{2,1}_p(\Omega_T)}\leq  C\big( \|c_0 \|_{W^{2,p}(\mathbb{R}^d)}+\|f \|_{L^p(\Omega_T)}\big),
\end{eqnarray}
where $C$ is a positive constant depending on $T$, $p$ and $d$, $\Omega_T=(0,T)\times\mathbb{R}^d$, and ${W^{2,1}_p(\Omega_T})$ is the Sobolev space involving time defined by
$$
W^{2,1}_p(\Omega_T)=\{u | u\in L^p(\Omega_T), D^r_tD_x^su \in L^p(\Omega_T) \mbox{~for~any~} 2r+|s|\leq 2\}
$$
with the following norm
$$
\|u\|_{{W^{2,1}_p(\Omega_T)}}=\sum\limits_{ 0\leq2r+|s|\leq 2 } \| D^r_tD_x^su\|_{L^p(\Omega_T)}.
$$
\end{lem}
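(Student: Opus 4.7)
The plan is to exploit the explicit representation
$$
c^\varepsilon(t,x) = e^{-\lambda t}(G_t \ast c_0)(x) + \int_0^{t-\varepsilon} e^{\lambda(s-t)} \bigl(G_{t-s} \ast f(\cdot,s)\bigr)(x)\,ds,
$$
where $G_\tau(x) = (4\pi\tau)^{-d/2} e^{-|x|^2/(4\tau)}$ is the heat kernel, and to combine Young's convolution inequality with the scaling of $\|G_\tau\|_{L^r}$ and $\|\nabla G_\tau\|_{L^r}$. Specifically, a direct change of variables gives
$$
\|G_\tau\|_{L^r(\mathbb{R}^d)} = C_{d,r}\,\tau^{-\frac{d}{2}(1-1/r)}, \qquad \|\nabla G_\tau\|_{L^r(\mathbb{R}^d)} = C_{d,r}\,\tau^{-\frac12 - \frac{d}{2}(1-1/r)}.
$$

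For \eqref{estpara1}, I would pick the Young exponent $r$ by $\tfrac{1}{p}+1 = \tfrac{1}{q} + \tfrac{1}{r}$, so $1-\tfrac{1}{r} = \tfrac{1}{q}-\tfrac{1}{p}$, and bound
$$
\|c^\varepsilon(t)\|_{L^p} \leq \|G_t\|_{L^1}\|c_0\|_{L^p} + C\int_0^{t-\varepsilon} e^{\lambda(s-t)} (t-s)^{-\frac{d}{2}(1/q-1/p)} \|f(\cdot,s)\|_{L^q}\,ds,
$$
using that $\|G_t\|_{L^1}=1$ for the initial-data term. The hypothesis $\tfrac{1}{q}-\tfrac{1}{p}<\tfrac{1}{d}$ (which is stronger than what is needed here) guarantees that the singular integral is finite and bounded by a constant depending only on $T$, $d$, $p$, $q$, which yields \eqref{estpara1}. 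For \eqref{estpara2}, I would move the gradient onto $c_0$ in the first summand (so that term is again controlled by $\|\nabla c_0\|_{L^p}$ via $\|G_t\|_{L^1}=1$) and onto the kernel in the convolution summand, giving
$$
\|\nabla c^\varepsilon(t)\|_{L^p} \leq \|\nabla c_0\|_{L^p} + C\int_0^{t-\varepsilon} e^{\lambda(s-t)} (t-s)^{-\frac12-\frac{d}{2}(1/q-1/p)} \|f(\cdot,s)\|_{L^q}\,ds.
$$
Here the exponent of $(t-s)$ is $<1$ exactly under the assumption $\tfrac{1}{q}-\tfrac{1}{p}<\tfrac{1}{d}$, so the time integral converges and the estimate follows; this is the sharp step where the hypothesis is actually used.

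For \eqref{estpara3}, the key observation is that differentiating the representation formula yields
$$
\partial_t c^\varepsilon - \Delta c^\varepsilon + \lambda c^\varepsilon = e^{-\lambda\varepsilon}\bigl(G_\varepsilon \ast f(\cdot, t-\varepsilon)\bigr)\mathbbm{1}_{\{t>\varepsilon\}} =: f^\varepsilon(t,x),
$$
so that $c^\varepsilon$ solves the standard parabolic equation with initial datum $c_0$ and right-hand side $f^\varepsilon$. Because $G_\varepsilon\ast$ is a contraction on $L^p$, one has $\|f^\varepsilon\|_{L^p(\Omega_T)} \leq \|f\|_{L^p(\Omega_T)}$. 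I would then invoke the classical maximal-regularity result for the heat equation (as cited from Brezis or Ladyženskaja--Solonnikov--Ural'ceva),
$$
\|c^\varepsilon\|_{W^{2,1}_p(\Omega_T)} \leq C\bigl(\|c_0\|_{W^{2,p}(\mathbb{R}^d)} + \|f^\varepsilon\|_{L^p(\Omega_T)}\bigr),
$$
and conclude \eqref{estpara3}. The only genuine subtlety is the first part: verifying that the critical exponent $\tfrac12 + \tfrac{d}{2}(\tfrac{1}{q}-\tfrac{1}{p})$ is less than $1$ under the stated hypothesis, which is where the proof is essentially tight; everything else is a direct bookkeeping exercise with Young's inequality and standard parabolic theory.
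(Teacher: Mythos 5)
Your proof is correct and follows essentially the same route as the paper's. For \eqref{estpara1} and \eqref{estpara2} the paper simply cites Lemma 2.1 of Ishida--Yokota \cite{IY2012}; your Young's-inequality argument with the heat-kernel scaling $\|G_\tau\|_{L^r}\sim\tau^{-\frac{d}{2}(1-1/r)}$, $\|\nabla G_\tau\|_{L^r}\sim\tau^{-\frac12-\frac{d}{2}(1-1/r)}$ is precisely what that cited lemma contains, and your observation that the hypothesis $\tfrac1q-\tfrac1p<\tfrac1d$ is only tight for the gradient estimate is accurate. For \eqref{estpara3} you identify the same PDE $\partial_t c^\varepsilon-\Delta c^\varepsilon+\lambda c^\varepsilon=e^{-\lambda\varepsilon}(G_\varepsilon\ast f(\cdot,t-\varepsilon))\mathbbm{1}_{\{t>\varepsilon\}}$ and invoke maximal $L^p$ parabolic regularity plus the $L^1$-contraction property of $G_\varepsilon\ast$; the only cosmetic difference is that the paper first splits off $\phi^\varepsilon=c^\varepsilon-e^{-\lambda t}e^{t\Delta}c_0$, which vanishes at $t=\varepsilon$, so that the cited maximal-regularity theorem (stated for zero initial data) applies directly, whereas you absorb $c_0$ into the version of the theorem allowing $W^{2,p}$ initial data. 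Both are standard and valid.
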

\begin{proof}
The inequalities \eqref{estpara1} and \eqref{estpara2} are direct results of Lemma 2.1 in \cite{IY2012}. We just need to prove \eqref{estpara3}. Set
\begin{eqnarray}
\phi^\varepsilon(t,x)=:\int_0^{t-\varepsilon} e^{\lambda(s-t)} e^{(t-s)\Delta}f(x,s) ds=
\left\{\begin{array}{ll}
\int_0^{t-\varepsilon} e^{\lambda(s-t)} e^{(t-s)\Delta}f(x,s)ds & \varepsilon < t, \\
0 &0 \leq t \leq \varepsilon.
\end{array}\right.
\end{eqnarray}
Then one can easily check that $\phi^\varepsilon$ satisfies the following initial value problem
\begin{eqnarray}\label{vriaequa}
\left\{\begin{aligned}{}
&\partial_t \phi^\varepsilon =\triangle \phi^\varepsilon-\lambda \phi^\varepsilon+e^{-\lambda\varepsilon} e^{\varepsilon\Delta}f(x,t-\varepsilon), && x \in \mathbb{R}^{d}, ~t>\varepsilon,\\
& \phi^\varepsilon(x,\varepsilon)=0,&& x \in \mathbb{R}^{d},
\end{aligned}\right.
\end{eqnarray}
where $e^{\varepsilon\Delta}f(x,t-\varepsilon)=\int_{\mathbb{R}^{d}}\frac{e^{-\frac{|x-y|^2}{4\varepsilon} } }   {(4\pi \varepsilon)^{\frac{d}{2}}}f(y,t-\varepsilon)dy$.

Then the following inequality is a particular consequence of  \cite[Theorem X.12]{B1983},
\begin{eqnarray}\label{lap1}\nonumber
\|\phi^\varepsilon \|_{W^{2,1}_p((\varepsilon,T)\times\mathbb{R}^d)}&\leq&  C\|e^{-\lambda\varepsilon} e^{\varepsilon\Delta}f (x,t-\varepsilon)\|_{L^p((\varepsilon,T)\times \mathbb{R}^d)}\\
&\leq&  C\|e^{\varepsilon\Delta}f (x,t-\varepsilon)\|_{L^p((\varepsilon,T)\times \mathbb{R}^d)},
\end{eqnarray}
where $C$ is a constant independent of $\varepsilon$.

By the Young's inequality for the convolution  \cite[pp. 99]{LL2001}, one knows that
\begin{eqnarray}\label{lap2}
\|e^{\varepsilon\Delta}f (\cdot,t-\varepsilon)\|_{ L^{p}(\mathbb{R}^d)}\leq C \|f (\cdot,t-\varepsilon)\|_{ L^{p}(\mathbb{R}^d)},  \quad \forall ~t>\varepsilon,
\end{eqnarray}
where $C$ is a constant  depending only on $d$ and $p$.

Combining \eqref{lap1} and \eqref{lap2} together, one has
\begin{eqnarray}\label{lap3}
\|\phi^\varepsilon \|_{W^{2,1}_p(\Omega_T)}=\|\phi^\varepsilon \|_{W^{2,1}_p((\varepsilon,T)\times\mathbb{R}^d)}\leq   C\|f\|_{L^p((0,T-\varepsilon)\times \mathbb{R}^d)}\leq   C\|f\|_{L^p(\Omega_T)}.
\end{eqnarray}

Since $c^\varepsilon=e^{-\lambda t} e^{t\Delta} c_0+\phi^\varepsilon$, one obtains \eqref{estpara3} immediately.
\end{proof}

The following lemma gives us a variant of Gagliardo-Nirenberg type inequality, which was proved by Sugiyama \cite[Lemma 2.4]{S2006}.
\begin{lem}\label{GNinequaliy}
Let $d\in\mathbb{N}$, $m\geq1$, $a>2$, $u\in L^{q_1}(\mathbb{R}^d)$ with $q_1\geq 1$ and $u^{\frac{r+m-1}{2}}\in H^1(\mathbb{R}^d)$
 with  $r>0$. If $q_1\in [1, r+m-1]$ and $q_2\in [\frac{r+m-1}{2},\frac{a(r+m-1)}{2}]$ satisfy
 \begin{eqnarray}\nonumber
\left\{\begin{array}{ll}
1 \leq q_1\leq q_2\leq \infty & d=1, \\
1 \leq q_1\leq q_2< \infty & d=2,\\
1 \leq q_1\leq q_2\leq \frac{d(r+m-1)}{d-2} & d\geq 3,
\end{array}\right.
\end{eqnarray}
then
 \begin{eqnarray}
 \|u\|_{L^{q_2}(\mathbb{R}^d)}\leq C ^{\frac{2}{r+m-1}} \|u\|^{1-\theta}_{L^{q_1}(\mathbb{R}^d)} \|\nabla u^{\frac{r+m-1}{2}}\|^{\frac{2\theta}{r+m-1}}_{L^{2}(\mathbb{R}^d)}
 \end{eqnarray}
 where
 \begin{eqnarray}\nonumber
\theta=\frac{r+m-1}{2}\big(\frac{1}{q_1}-\frac{1}{q_2}\big)\big(\frac{1}{d}-\frac{1}{2}+\frac{r+m-1}{2q_1}\big)^{-1}\\
C=\left\{\begin{array}{ll}
C(d,a) & \frac{r+m-1}{2}\leq q_1, \\
C_0 (d,a)^{\frac{1}{\beta}} & 1\leq q_1 <\frac{r+m-1}{2} ,
\end{array}\right.\\
\beta=\frac{q_2-\frac{r+m-1}{2}}{q_2-q_1}\big[\frac{2q_1}{r+m-1}+(1-\frac{2q_1}{r+m-1})\frac{2d}{d+2}\big].
\end{eqnarray}
 \end{lem}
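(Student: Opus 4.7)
My plan is to reduce the statement to the classical Gagliardo--Nirenberg--Sobolev inequality through the substitution $v=u^{(r+m-1)/2}$. Set $s=(r+m-1)/2$; since $u\geq 0$, for any exponent $q\geq 1$ one has $\int_{\mathbb R^d}|u|^q=\int_{\mathbb R^d} v^{q/s}$, so the desired bound is equivalent to
\begin{equation*}
\|v\|_{L^{q_2/s}(\mathbb R^d)}\leq C\,\|v\|_{L^{q_1/s}(\mathbb R^d)}^{1-\theta}\,\|\nabla v\|_{L^2(\mathbb R^d)}^{\theta},
\end{equation*}
after raising to the power $1/s$. The dimensional balance
\begin{equation*}
\tfrac{s}{q_2}=(1-\theta)\tfrac{s}{q_1}+\theta\bigl(\tfrac12-\tfrac1d\bigr),
\end{equation*}
solved for $\theta$, reproduces exactly the formula stated in the lemma; this identifies the correct interpolation parameter from the outset.

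In the easy regime $q_1\geq s$ we have $q_1/s\geq 1$, so the displayed inequality is a direct application of the classical Gagliardo--Nirenberg--Sobolev inequality for $v\in H^1(\mathbb R^d)\cap L^{q_1/s}(\mathbb R^d)$. The admissible range for $q_2/s$ (namely $q_2/s\leq \frac{d(q_1/s)}{d-2}$ when $d\geq 3$, all exponents when $d=2$, no upper restriction when $d=1$) translates directly into the range stated for $q_2$ after multiplication by $s$. The resulting constant depends only on $d$ and the chosen parameter $a$, explaining the first branch $C=C(d,a)$.

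The delicate case is $1\leq q_1<s$, where $q_1/s<1$ and the classical inequality cannot be invoked with $L^{q_1/s}$ as a norm. Here I would interpolate in two steps. First apply the Gagliardo--Nirenberg inequality between $L^1$ and the critical Sobolev exponent $L^{2d/(d-2)}$ to obtain, for the appropriate intermediate exponent,
\begin{equation*}
\|v\|_{L^{q_2/s}}\leq C\,\|v\|_{L^1}^{1-\alpha}\,\|\nabla v\|_{L^2}^{\alpha}.
\end{equation*}
Next, bound the $L^1$ factor by a H\"older interpolation of $|v|=|v|^{q_1/s}\cdot|v|^{1-q_1/s}$, assigning the low-power piece to $L^{q_1/(s\cdot(q_1/s))}=L^{1}$-style integrability and the high-power remainder to $L^{2d/(d-2)}$, which is then absorbed into $\|\nabla v\|_{L^2}$ through Sobolev embedding. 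Tracking the exponents carefully yields a relation of the form $\|v\|_{L^1}\leq \|v\|_{L^{q_1/s}}^{\beta}\,(\text{Sobolev term})^{1-\beta}$ with precisely the exponent $\beta$ written in the lemma, and the resulting constant takes the form $C_0(d,a)^{1/\beta}$ because the Sobolev factor is raised to the power $1-\beta$ before being combined with the first step.

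The main obstacle will be the bookkeeping in the second regime: one must verify that the two-step interpolation is \emph{balanced}, i.e.\ that after the second H\"older step the power of $\|\nabla v\|_{L^2}$ collapses to the same $\theta$ given by dimensional balance, and that all intermediate exponents remain admissible under the assumptions on $d$, $q_1$, $q_2$. The algebraic identity defining $\beta$ is exactly what forces this balancing, and checking it amounts to equating the two expressions for $\|v\|_{L^{q_2/s}}$ and solving linearly. The case analysis for $d=1,2,\geq 3$ is otherwise routine, as is verifying that $\theta\in[0,1]$ under the stated ranges for $q_1$ and $q_2$.
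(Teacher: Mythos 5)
The paper does not prove Lemma~\ref{GNinequaliy}; it cites the result directly from Sugiyama \cite[Lemma~2.4]{S2006}, so there is no paper proof to compare against. Your reduction via the substitution $v=u^{(r+m-1)/2}$ to the classical Gagliardo--Nirenberg inequality for $v\in H^1(\mathbb{R}^d)$ is the natural route, your scaling balance correctly recovers the stated $\theta$, and the regime $q_1\geq(r+m-1)/2$ is indeed a direct application.

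Your treatment of the regime $1\leq q_1<(r+m-1)/2$ has a gap, though. Writing $s=(r+m-1)/2$, you split $|v|=|v|^{q_1/s}\cdot|v|^{1-q_1/s}$ and propose to place $|v|^{q_1/s}$ in $L^{q_1/(s\cdot(q_1/s))}=L^1$; but the H\"older conjugate of $1$ is $\infty$, so this would require control of $\|v\|_{L^\infty}$, which is not available. What you want instead is Lyapunov interpolation for the $L^1$ norm between the quasi-norm $\|v\|_{L^{q_1/s}}$ and the critical Sobolev norm $\|v\|_{L^{2d/(d-2)}}$ (for $d\geq3$): choose $\mu\in(0,1)$ with $1=\frac{(1-\mu)s}{q_1}+\frac{\mu(d-2)}{2d}$, which is H\"older applied to $|v|=|v|^{1-\mu}\cdot|v|^{\mu}$ with conjugate exponents both strictly between $1$ and $\infty$ (so the fact that $q_1/s<1$ is harmless). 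This yields $\|v\|_{L^1}\leq\|v\|_{L^{q_1/s}}^{1-\mu}\|v\|_{L^{2d/(d-2)}}^{\mu}$, and Sobolev absorbs the second factor into $\|\nabla v\|_{L^2}$. Composing with the first GN step $\|v\|_{L^{q_2/s}}\leq C\|v\|_{L^1}^{1-\alpha}\|\nabla v\|_{L^2}^{\alpha}$ and collecting exponents does give the stated $\theta$ (the sum of the two $\|\nabla v\|_{L^2}$-exponents collapses automatically, as you anticipate), and the constant appears as $C_0(d,a)^{1/\beta}$; note, however, that the lemma's $\beta$ is the \emph{composite} exponent on $\|v\|_{L^{q_1/s}}$ after both steps, not the Lyapunov exponent alone, so your claim that $\|v\|_{L^1}\leq\|v\|_{L^{q_1/s}}^{\beta}(\cdots)^{1-\beta}$ ``with precisely the exponent $\beta$ written in the lemma'' also needs correction. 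The overall strategy is sound; the specific H\"older split and the identification of where $\beta$ arises are the pieces to fix.
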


Next, we start from the intermediate system \eqref{intermeKSeq} to  derive the $L^r$ estimate. The proof will be given in the Appendix.
\begin{lem}\label{energeyest}
Let $(\rho_\varepsilon, c_\varepsilon)$ be solutions to \eqref{intermeKSeq} with the initial data $(\rho_0, c_0)$.  Then for any $r\geq 2$, we have the following estimates in different space dimensions.
\begin{eqnarray}\label{disestfor1d}
\|\rho_\varepsilon\|^r_{L^r(\mathbb{R})}+\frac{2(r-1) }{r}\int_0^t\| \nabla (\rho_\varepsilon^{\frac{r}{2}})\|^2_{L^2(\mathbb{R})}ds
\leq \|\rho_0\|^r_{L^r(\mathbb{R})}+C_{r,1}t\big(\|c_0 \|_{W^{2,r+1}(\mathbb{R})}+ M_0^{2r+1}\big), \\ \label{disestfor2d}
\|\rho_\varepsilon\|^r_{L^r(\mathbb{R}^2)}+\big(\frac{4(r-1) }{r}-C_{r,2}M_0\big)\int_0^t\| \nabla (\rho_\varepsilon^{\frac{r}{2}})\|^2_{L^2(\mathbb{R}^2)}ds
\leq \|\rho_0\|^r_{L^r(\mathbb{R}^2)}+C_{r,2}t\|c_0 \|_{W^{2,r+1}(\mathbb{R}^2)}, \\  \label{disestforhighd}
\frac{d}{dt}\|\rho_\varepsilon\|^r_{L^{r}(\mathbb{R}^d)}\leq 2(r-1)\|\nabla \rho_\varepsilon^{\frac{r}{2}}\|^2_{L^2(\mathbb{R}^d)}\big(-\frac{2}{r}+C_d\|\nabla c_0\|_{L^d(\mathbb{R}^d)} +C_{d,q} \sup\limits_{0\leq s\leq t} \|\rho_\varepsilon(s)\|_{L^{q}(\mathbb{R}^d)}\big),
\end{eqnarray}
the last inequality holds true for any $d\geq 3$, and $M_0=\|\rho_0\|_{L^1(\mathbb{R}^d)}$, $\frac{d}{2}< q \leq d$.
\end{lem}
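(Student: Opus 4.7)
The plan is a standard $L^r$ energy estimate for the intermediate system \eqref{intermeKSeq}. I would test the evolution equation for $\rho_\varepsilon$ against $r(\rho_\varepsilon)^{r-1}$ and integrate over $\mathbb{R}^d$. Integration by parts on the diffusion term produces the dissipation $-\frac{4(r-1)}{r}\|\nabla\rho_\varepsilon^{r/2}\|_{L^2}^2$, while for the chemotactic term two equivalent forms are useful:
\begin{equation*}
\frac{d}{dt}\|\rho_\varepsilon\|_{L^r}^r + \frac{4(r-1)}{r}\|\nabla\rho_\varepsilon^{r/2}\|_{L^2}^2 \;=\; -(r-1)\!\int_{\mathbb{R}^d}\!\rho_\varepsilon^r\,\Delta c^\varepsilon\,dx \;=\; 2(r-1)\!\int_{\mathbb{R}^d}\!\rho_\varepsilon^{r/2}\nabla\rho_\varepsilon^{r/2}\cdot\nabla c^\varepsilon\,dx.
\end{equation*}
The task is thus to control the chemotactic integral on the right by the dissipation together with quantities involving only the conserved mass $M_0$ and the data $c_0$, and this is done dimension by dimension.

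For $d=1$ I would apply H\"older in the first form, $|\!\int\rho_\varepsilon^r\Delta c^\varepsilon\,dx|\le\|\rho_\varepsilon\|_{L^{r+1}}^r\|\Delta c^\varepsilon\|_{L^{r+1}}$, interpolate the first factor between $M_0$ and the dissipation by Lemma \ref{GNinequaliy} (with $m=1$, $q_1=1$, $q_2=r+1$, yielding the GN exponent $(2r+1)/(r+1)^2$), and bound $\|\Delta c^\varepsilon\|_{L^{r+1}(\Omega_t)}$ by Lemma \ref{estiofpara}\eqref{estpara3} in terms of $\|c_0\|_{W^{2,r+1}}$ and $\|\rho_\varepsilon\|_{L^{r+1}(\Omega_t)}$. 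Two successive applications of Young's inequality --- the first calibrated so that the exponent of $\|\nabla\rho_\varepsilon^{r/2}\|_{L^2}$ is exactly $2$ and leaves $\frac{2(r-1)}{r}\|\nabla\rho_\varepsilon^{r/2}\|_{L^2}^2$ on the left, the second applied after time integration to absorb the residual $\|\rho_\varepsilon\|_{L^{r+1}(\Omega_t)}$ back into the dissipation via the same GN bound --- produce the remainder $C_{r,1}t(\|c_0\|_{W^{2,r+1}}+M_0^{2r+1})$ stated in \eqref{disestfor1d}, with the exponent $2r+1$ emerging from the composition of the GN exponents with the Young conjugation.

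For $d=2$ I would use the second form together with Cauchy--Schwarz and Young to get
\begin{equation*}
2(r-1)\!\int\!\rho_\varepsilon^{r/2}\nabla\rho_\varepsilon^{r/2}\cdot\nabla c^\varepsilon\,dx \le \tfrac{2(r-1)}{r}\|\nabla\rho_\varepsilon^{r/2}\|_{L^2}^2 + \tfrac{r(r-1)}{2}\!\int\!\rho_\varepsilon^r|\nabla c^\varepsilon|^2\,dx,
\end{equation*}
and then treat $\int\rho_\varepsilon^r|\nabla c^\varepsilon|^2\,dx$ using the critical 2D Gagliardo--Nirenberg inequality $\int\rho_\varepsilon^{r+1}\,dx\le CM_0\|\nabla\rho_\varepsilon^{r/2}\|_{L^2}^2$ (Lemma \ref{GNinequaliy} with $d=2$, $q_1=1$, $q_2=r+1$), combined with an $L^{r+1}$ bound on $\nabla c^\varepsilon$ via Lemma \ref{estiofpara}. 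The critical-mass coupling $C_{r,2}M_0\|\nabla\rho_\varepsilon^{r/2}\|_{L^2}^2$ is then moved to the left, producing the coefficient $\frac{4(r-1)}{r}-C_{r,2}M_0$ in \eqref{disestfor2d}. For $d\ge 3$ I would H\"older the same integrand with the Sobolev-critical triple $L^2\times L^{2d/(d-2)}\times L^d$, control $\|\rho_\varepsilon^{r/2}\|_{L^{2d/(d-2)}}$ by $\|\nabla\rho_\varepsilon^{r/2}\|_{L^2}$ through Sobolev embedding (Lemma \ref{GNinequaliy}), and bound $\|\nabla c^\varepsilon\|_{L^d}\le \|\nabla c_0\|_{L^d}+C\sup_{s\le t}\|\rho_\varepsilon(s)\|_{L^q}$ by Lemma \ref{estiofpara}\eqref{estpara2} with $p=d$ and some $q\in(d/2,d]$. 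Collecting these factors yields \eqref{disestforhighd} directly in differential form.

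The main obstacle lies in the 1D case, where obtaining \emph{precisely} the factor $M_0^{2r+1}$ on the right-hand side (as opposed to a mixed power involving an unabsorbed $\|\rho_\varepsilon\|_{L^{r+1}(\Omega_t)}$) requires the GN interpolation to be invoked both before and after the time integration, with conjugate Young exponents chosen carefully to match. The $d=2$ case is then the classical critical-mass computation, and the $d\ge 3$ case is essentially a direct Sobolev--H\"older estimate.
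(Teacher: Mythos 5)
Your overall plan --- $L^r$ test, split into dimensions, GN interpolation from $M_0$ and dissipation, and the $L^p$ parabolic estimates of Lemma \ref{estiofpara} --- is the paper's plan, and your treatments of $d=1$ and $d\ge 3$ agree with the paper in substance (for $d\ge 3$ the paper uses exactly the second form of the chemotactic term, the $L^2\times L^{2d/(d-2)}\times L^d$ H\"older triple, Sobolev embedding and \eqref{estpara2}; for $d=1$ it uses the first form, H\"older against $\Delta c^\varepsilon$, Young, \eqref{estpara3}, then GN and a second Young, producing $M_0^{2r+1}$ exactly as you compute).

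The gap is in $d=2$. The paper handles $d=2$ by the \emph{same} route as $d=1$: from the first form it writes $|\int\rho_\varepsilon^r\Delta c^\varepsilon\,dx|\le\|\rho_\varepsilon\|_{L^{r+1}}^r\|\Delta c^\varepsilon\|_{L^{r+1}}$, applies Young, and then controls $\int_0^t\|\Delta c^\varepsilon\|_{L^{r+1}}^{r+1}ds$ by $\|c_0\|_{W^{2,r+1}}^{r+1}+\|\rho_\varepsilon\|^{r+1}_{L^{r+1}(\Omega_t)}$ via the $W^{2,1}_{r+1}$ maximal-regularity estimate \eqref{estpara3}; since GN at $d=2$ gives exactly $\|\rho_\varepsilon\|^{r+1}_{L^{r+1}}\le CM_0\|\nabla\rho_\varepsilon^{r/2}\|_{L^2}^2$ (exponent $=2$), this term is absorbed to the left with coefficient $\frac{4(r-1)}{r}-C_{r,2}M_0$. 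Your $d=2$ route instead uses the pointwise Young inequality and then has to bound $\int\rho_\varepsilon^r|\nabla c^\varepsilon|^2dx$. After H\"older this produces a factor like $\|\nabla c^\varepsilon\|_{L^{p}}$ for some $p>2$, and in $d=2$ estimate \eqref{estpara2} can control $\|\nabla c^\varepsilon\|_{L^p}$ only by $\|\rho_\varepsilon\|_{L^q}$ with $\frac1q-\frac1p<\frac12$, hence $q>\frac{2p}{p+2}>1$; this cannot be replaced by $M_0=\|\rho_\varepsilon\|_{L^1}$, so the estimate does not close from the data $(M_0,c_0)$ alone --- it is circular. Moreover, even ignoring that, the preliminary pointwise Young step already gives away half the dissipation, so at best you would reach $\frac{2(r-1)}{r}-C_{r,2}M_0$ rather than the stated $\frac{4(r-1)}{r}-C_{r,2}M_0$. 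The fix is to use the $\Delta c^\varepsilon$ form in $d=2$ as in $d=1$, so that \eqref{estpara3} (which is $L^p\to L^p$ with no loss of integrability) rather than \eqref{estpara2} supplies the bound on the $c^\varepsilon$ factor.
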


From the above Lemma \ref{energeyest} and the standard time derivative estimate, we have the following uniform estimates.
\begin{prop}\label{uniest}
Let $(\rho_\varepsilon, c_\varepsilon)$  with $0\leq t \leq T$ be solutions to \eqref{intermeKSeq} with the initial data $(\rho_0, c_0)$.  Under the conditions \eqref{inicon1} and \eqref{inicon2}, we have the following uniform estimates:
  \begin{eqnarray} \label{domainest}
\|\rho_\varepsilon\|_{L^\infty(0,T; L^2(\mathbb{R}^d))} + \int_0^T\| \nabla \rho_\varepsilon\|^2_{L^2(\mathbb{R}^d)}ds
\leq C,\\
\label{Linfinity}
 \| \rho_\varepsilon \|_{L^\infty(0,T; {L^1 (\mathbb{R}^{d})})}= M_0, \quad  \|\rho_\varepsilon\|_{L^\infty(0,T; {L^\infty (\mathbb{R}^{d})})}\leq C,\\
 \label{timeest}
\|\partial_t\rho_\varepsilon\|_{L^2(0, T; H^{-1}(\mathbb{R}^d))} \leq C,
    \end{eqnarray}
where $C$ are constants depending only on $d, T, M_0$, $\|\rho_0\|_{L^\infty(\mathbb{R}^d)}$ and $\|c_0\|_{W^{2,3}(\mathbb{R}^d)}$.

 Furthermore, if $\int_{\mathbb{R}^{d}}|x|\rho_0 dx<+\infty$, then
\begin{eqnarray}\label{1moment}
\int_{\mathbb{R}^{d}}|x|\rho_\varepsilon  dx\leq C, \quad \forall~0\leq t\leq T,
\end{eqnarray}
where $C$ is a constant depending only on $d, T, M_0$, $\|\rho_0\|_{L^\infty(\mathbb{R}^d)}$ and $\|c_0\|_{W^{2,\infty}(\mathbb{R}^d)}$.
\end{prop}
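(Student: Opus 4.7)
The plan is to derive all four estimates by combining the differential inequalities of Lemma \ref{energeyest} with the parabolic regularity of Lemma \ref{estiofpara}, all kept uniform in $\varepsilon$. For the $L^r$ and $L^\infty$ bounds I would proceed dimension by dimension. In $d=1$, inequality \eqref{disestfor1d} directly yields a bound for every $r\geq 2$, using $\|c_0\|_{W^{2,r+1}}<\infty$ from \eqref{inicon1}. In $d=2$, the coefficient $\frac{4(r-1)}{r}-C_{r,2}M_0$ of the dissipative term in \eqref{disestfor2d} is strictly positive thanks to the subcritical-mass assumption in \eqref{inicon2}, so the bound follows for every $r\geq 2$. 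In $d\geq 3$, the relevant inequality \eqref{disestforhighd} is a differential inequality whose sign depends on $\sup_s\|\rho_\varepsilon\|_{L^q}$; I would run a continuity/bootstrap argument with $r=q=a$ (where $a\in(\tfrac{d}{2},d]$ is from \eqref{inicon2}). At $t=0$ the bracket $-\tfrac{2}{a}+C_d\|\nabla c_0\|_{L^d}+C_{d,a}\|\rho_0\|_{L^a}$ is strictly negative thanks to the smallness encoded in \eqref{inicon2}; by continuity it stays negative, so $\|\rho_\varepsilon(t)\|_{L^a}$ is nonincreasing on $[0,T]$. Taking $r=2$ in each regime then yields \eqref{domainest}, while a Moser iteration along $r_k=2^k\to\infty$ (bootstrapping the right-hand side of \eqref{disestforhighd} via Gagliardo--Nirenberg, Lemma \ref{GNinequaliy}, and the $\|\rho_0\|_{L^\infty}$-bound in \eqref{inicon1}) upgrades this to \eqref{Linfinity}. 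Mass conservation $\|\rho_\varepsilon(t)\|_{L^1}=M_0$ is immediate from integrating the divergence-form equation in $x$.

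For the time-derivative estimate \eqref{timeest}, I would test the weak form of \eqref{intermeKSeq} against $\varphi\in H^1(\mathbb{R}^d)$: the duality pairing reads
\[
\langle \partial_t\rho_\varepsilon,\varphi\rangle = -\int_{\mathbb{R}^d}\nabla\rho_\varepsilon\cdot\nabla\varphi\,dx+\int_{\mathbb{R}^d}\rho_\varepsilon\,\nabla c_\varepsilon\cdot\nabla\varphi\,dx,
\]
and is bounded by $(\|\nabla\rho_\varepsilon\|_{L^2}+\|\rho_\varepsilon\|_{L^\infty}\|\nabla c_\varepsilon\|_{L^2})\|\nabla\varphi\|_{L^2}$; the first factor is controlled in $L^2_t$ by \eqref{domainest} and \eqref{Linfinity}, while $\|\nabla c_\varepsilon\|_{L^2}$ comes from Lemma \ref{estiofpara} (with $p=q=2$ and $f=\rho_\varepsilon$). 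Taking the supremum over unit-ball $\varphi$, squaring and integrating in time then yields \eqref{timeest}. For the first moment, I would multiply \eqref{intermeKSeq} by a truncated weight $\chi_R(x)\langle x\rangle$ (with $\langle x\rangle=\sqrt{1+|x|^2}$), integrate by parts, send $R\to\infty$, and invoke $|\nabla\langle x\rangle|\leq 1$ together with the bound $\|\nabla c_\varepsilon\|_{L^\infty}\leq C$ coming from Lemma \ref{estiofpara} (using $c_0\in W^{2,\infty}$ from \eqref{inicon1} and the previously established $L^\infty$-bound on $\rho_\varepsilon$). A Gronwall argument in the resulting differential inequality closes \eqref{1moment}.

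The main obstacle is the $d\geq 3$ case of the $L^a$ bootstrap: unlike $d=1$ (where the estimate is unconditional) or $d=2$ (where a single smallness of $M_0$ suffices), one must simultaneously exploit the smallness of $\|\nabla c_0\|_{L^d}$ and of $\|\rho_0\|_{L^a}$ in \eqref{inicon2} and check that the bracket in \eqref{disestforhighd} remains negative uniformly on $[0,T]$, rather than only on a short initial interval. Crucially, all constants coming from Lemmas \ref{estiofpara} and \ref{GNinequaliy} are independent of $\varepsilon$, which is what enables the resulting bounds to be $\varepsilon$-uniform and therefore available for the compactness argument in the next subsection.
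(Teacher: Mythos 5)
Your overall strategy for the $L^r$, $L^2$, $L^1$ and $L^\infty$ bounds mirrors the paper's proof exactly: $r=2$ in \eqref{disestfor1d}, positivity of $\frac{4(r-1)}{r}-C_{r,2}M_0$ from the subcritical-mass hypothesis in $d=2$, and for $d\geq 3$ the continuity argument showing the bracket in \eqref{disestforhighd} stays negative so that $\|\rho_\varepsilon(t)\|_{L^a}$ is nonincreasing (the paper phrases this slightly more tersely but uses precisely this mechanism, then takes $r=2$, $q=a$ and integrates). The $L^\infty$ bound via Moser iteration is also what the paper does, although the paper simply cites the corresponding parabolic--elliptic references rather than writing the iteration out; note that \eqref{disestforhighd} does not iterate directly since $-\tfrac{2}{r}\to 0$, so the iteration has to control the reaction term by a fixed $L^{q_0}$ norm rather than $L^{r}$, which is what those references do.

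Where you genuinely diverge from the paper, and where a small gap appears, is in the time-derivative bound. You split $\rho_\varepsilon\nabla c_\varepsilon\cdot\nabla\varphi$ as $\|\rho_\varepsilon\|_{L^\infty}\|\nabla c_\varepsilon\|_{L^2}\|\nabla\varphi\|_{L^2}$ and invoke Lemma \ref{estiofpara} with $p=q=2$. That application needs $\nabla c_0\in L^2(\mathbb{R}^d)$, but the hypotheses \eqref{inicon1}--\eqref{inicon2} only give $c_0\in W^{2,3}\cap W^{2,\infty}$ and, for $d\geq 3$, $\nabla c_0\in L^d$; none of these imply $\nabla c_0\in L^2$ in general. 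The paper avoids this by using the complementary split $\|\rho_\varepsilon\|_{L^2}\|\nabla c_\varepsilon\|_{L^\infty}\|\nabla\varphi\|_{L^2}$ and controlling $\|\nabla c_\varepsilon\|_{L^\infty}\leq\|\nabla c_0\|_{L^\infty}+C\|\rho_\varepsilon\|_{L^\infty(0,T;L^\infty)}$ from \eqref{estpara2} with $p=\infty$ (which only needs $c_0\in W^{2,\infty}$ plus the already-established $L^\infty$ bound on $\rho_\varepsilon$). You should adopt that split, or else add an extra interpolation step to justify $\|\nabla c_\varepsilon\|_{L^2}<\infty$.

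For the first-moment estimate you propose a purely PDE argument (truncated weight $\chi_R\langle x\rangle$, integration by parts, $R\to\infty$, Gronwall). That works, but the paper's route is more economical: since $\rho_\varepsilon$ is the time-marginal density of the self-consistent SDE $(\bar{X}_t^{i,\varepsilon})$, one simply writes $\int|x|\rho_\varepsilon\,dx=\mathbb{E}[|\bar{X}_t^{i,\varepsilon}|]$ and bounds the drift contribution by $\|\nabla c_\varepsilon\|_{L^\infty(0,T;L^\infty)}\|\rho_\varepsilon\|_{L^1(0,T;L^1)}\leq CTM_0$, with the Brownian part giving $\sqrt{2t}$. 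This avoids any truncation/limit argument and reuses the same $L^\infty$ bound on $\nabla c_\varepsilon$; your version is a valid alternative but longer, and in any case it also relies on the $\|\nabla c_\varepsilon\|_{L^\infty}$ bound, so there is no real saving.
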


\begin{prop}\label{higeuniest}
Let $(\rho_\varepsilon, c_\varepsilon)$  with $0\leq t \leq T$ be solutions to \eqref{intermeKSeq} with the initial data $(\rho_0, c_0)$ satisfying \eqref{inicon1}, \eqref{inicon2}  and \eqref{inicon3}.  Then  we have the following uniform estimates:
\begin{eqnarray}
\|\rho_\varepsilon\|_{C^{2,1}(\overline{\Omega}_T)}+\| c_\varepsilon\|_{C^{2,1}(\overline{\Omega}_T)}\leq C,
\end{eqnarray}
 where $C$ is a constant independent of $\varepsilon$, $\overline{\Omega}_T=[0,T]\times\mathbb{R}^d$ and the norm defined by
\begin{eqnarray}\nonumber
 \|u\|_{C^{2,1}(\overline{\Omega}_T)}=\sum\limits_{ 0\leq2r+|s|\leq 2 } \| D^r_tD_x^su\|_{L^\infty(\overline{\Omega}_T)}.
 \end{eqnarray}
\end{prop}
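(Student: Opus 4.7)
The plan is to bootstrap from the uniform bounds of Proposition \ref{uniest} through parabolic $L^p$ regularity, a De Giorgi--Nash--Moser estimate, and two rounds of Schauder estimates, tracking every constant independently of $\varepsilon$. The result will actually be stronger than what is claimed: $\rho^\varepsilon,c^\varepsilon\in C^{2+\beta,1+\beta/2}(\overline\Omega_T)$ uniformly in $\varepsilon$ for some $\beta\in(0,1)$, from which the stated $C^{2,1}$ bound follows immediately.

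I start from $\rho^\varepsilon\in L^\infty(\Omega_T)\cap L^\infty(0,T;L^q(\mathbb{R}^d))$ for every $q\in[1,\infty]$ with $\varepsilon$-independent bounds, furnished by Proposition \ref{uniest}. Inserting this into Lemma \ref{estiofpara} together with $c_0\in W^{2,\infty}\cap W^{2,p}$ yields $\|c^\varepsilon\|_{W^{2,1}_p(\Omega_T)}\leq C_p$ and $\|\nabla c^\varepsilon\|_{L^\infty(\Omega_T)}\leq C$ uniformly in $\varepsilon$ for every finite $p$. Choosing $p$ large, the parabolic Sobolev embedding $W^{2,1}_p\hookrightarrow C^{1+\alpha_1,(1+\alpha_1)/2}$ gives uniform H\"older control of $\nabla c^\varepsilon$. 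Rewriting the $\rho^\varepsilon$-equation in non-divergence form,
\begin{equation*}
\partial_t\rho^\varepsilon-\Delta\rho^\varepsilon+\nabla c^\varepsilon\cdot\nabla\rho^\varepsilon+(\Delta c^\varepsilon)\rho^\varepsilon=0,
\end{equation*}
one has a uniformly parabolic equation with bounded drift, $L^p$ potential and bounded solution, so the parabolic De Giorgi--Nash--Moser theorem (e.g.\ Ladyzhenskaya--Solonnikov--Ural'tseva, Ch.~III) yields $\rho^\varepsilon\in C^{\beta,\beta/2}(\overline\Omega_T)$ uniformly in $\varepsilon$; the H\"older initial datum $\rho_0\in C^{2+\alpha}$ from \eqref{inicon3} supplies the regularity needed up to $t=0$.

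With $\rho^\varepsilon$ uniformly H\"older I upgrade $c^\varepsilon$ using its representation
\begin{equation*}
c^\varepsilon(t,x)=e^{-\lambda t}(e^{t\Delta}c_0)(x)+\int_0^{t-\varepsilon}e^{-\lambda(t-s)}\bigl(e^{(t-s)\Delta}\rho^\varepsilon(\cdot,s)\bigr)(x)\,ds.
\end{equation*}
The homogeneous piece inherits $C^{2,1}$-regularity from $c_0\in C^{2+\alpha}$. For the convolution piece, differentiating twice in $x$ under the integral and invoking the classical heat-kernel estimate $\|\partial_x^2 e^{\tau\Delta}f\|_\infty\leq C\tau^{-1+\beta/2}[f]_{C^\beta}$ makes the integrand integrable in $s\in(0,t-\varepsilon)$ with a constant independent of $\varepsilon$; $\partial_t c^\varepsilon$ is then read off from the governing identity $\partial_t c^\varepsilon=\Delta c^\varepsilon-\lambda c^\varepsilon+e^{-\lambda\varepsilon}e^{\varepsilon\Delta}\rho^\varepsilon(\cdot,t-\varepsilon)$ from \eqref{vriaequa}, with a harmless bounded jump of size $\lesssim\|\rho_0\|_\infty$ at $t=\varepsilon$ that is compatible with the $L^\infty$-based norm $\|\cdot\|_{C^{2,1}(\overline\Omega_T)}$ defined in the statement. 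A further round of these estimates promotes $\nabla c^\varepsilon$ and $\Delta c^\varepsilon$ to $C^{\beta,\beta/2}(\overline\Omega_T)$ uniformly in $\varepsilon$.

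Finally, the $\rho^\varepsilon$-equation is now linear uniformly parabolic with H\"older coefficients and a $C^{2+\alpha}$ initial datum, so parabolic Schauder theory (Friedman, or LSU Ch.~IV) gives $\rho^\varepsilon\in C^{2+\beta,1+\beta/2}(\overline\Omega_T)$ with an $\varepsilon$-independent norm, which entails the claim. The hard part is precisely the uniformity in $\varepsilon$: a naive $L^\infty$-in-space second-derivative estimate for the heat semigroup applied to $\rho^\varepsilon$ is forbidden because $\int_0^{t-\varepsilon}(t-s)^{-1}\,ds\sim|\log\varepsilon|$ blows up, so one must systematically route through H\"older estimates, where the kernel singularity $(t-s)^{-1+\beta/2}$ is integrable up to $s=t$ with a constant independent of $\varepsilon$.
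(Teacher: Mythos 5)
Your proof is correct and reaches the stated conclusion, but the route is genuinely different from the paper's. The paper stays entirely within the Duhamel/$L^p$ framework on the $\rho^\varepsilon$-equation: from $Z^\varepsilon\in L^2$ it bootstraps $\nabla\rho^\varepsilon\in L^p$ for $p<2+4/d$ via convolution against the heat-kernel gradient $\Gamma$, iterates to $Z^\varepsilon\in L^p$ for all $p<d+1$, invokes maximal $L^p$-regularity to land in $W^{2,1}_p(\Omega_T)$, and then uses the (parabolic) Morrey embedding to reach $\rho^\varepsilon\in C^{\alpha,\alpha/2}$; only after that does it apply Schauder theory, first to $c^\varepsilon$ and then back to $\rho^\varepsilon$. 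You instead push the $W^{2,1}_p$ estimate onto $c^\varepsilon$ for large $p$ and use the parabolic Sobolev embedding to H\"olderize $\nabla c^\varepsilon$, then obtain the H\"older bound on $\rho^\varepsilon$ from parabolic De Giorgi--Nash--Moser applied to the non-divergence form equation, and finally upgrade $c^\varepsilon$ by hand through the mild representation and the kernel estimate $\|\partial_x^2e^{\tau\Delta}f\|_\infty\lesssim\tau^{-1+\beta/2}[f]_{C^\beta}$ before closing with Schauder. Both routes are sound and arrive at the same place; yours trades the paper's explicit but somewhat lengthy $L^p$-bootstrap for the heavier but more packaged DNM machinery, and in exchange is shorter and keeps the $\varepsilon$-independence transparent at every step. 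One genuine improvement on your side: you explicitly flag that the forcing $e^{-\lambda\varepsilon}e^{\varepsilon\Delta}\rho^\varepsilon(\cdot,t-\varepsilon)$ in \eqref{vriaequa} has a jump at $t=\varepsilon$, so a uniform $C^{2+\alpha,1+\alpha/2}(\overline\Omega_T)$ bound on $c^\varepsilon$ is in fact not available across $t=\varepsilon$, but the $L^\infty$-based $C^{2,1}$ norm asked for in the statement survives; the paper's proof asserts the H\"older estimate globally without addressing this point, and your remark patches that small gap.
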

\begin{proof}
 Since for any fixed $T>0$, it has been proved in Proposition \ref{uniest} that
 \begin{eqnarray}\label{rhoest}
 \| \rho_\varepsilon \|_{L^\infty(0,T; {L^1 (\mathbb{R}^{d})})}= M_0; \quad  \|\rho_\varepsilon\|_{L^\infty(0,T; {L^\infty (\mathbb{R}^{d})})}\leq C; \quad
  \int_0^T\| \nabla \rho_\varepsilon\|^2_{L^2(\mathbb{R}^d)}ds
\leq C,
\end{eqnarray}
then by Lemma \ref{estiofpara}, one has
 \begin{eqnarray}\label{Linfiniforc}
\|\nabla c^\varepsilon \|_{L^\infty(\Omega_T)}&\leq&  \|\nabla c_0 \|_{L^\infty(\mathbb{R}^d)}+C\|\rho_\varepsilon \|_{L^\infty(\Omega_T)}\leq C;\\ \nonumber
\|\triangle c^\varepsilon \|_{L^p(\Omega_T)}&\leq&  C\big( \|c_0 \|_{W^{2,p}(\mathbb{R}^d)}+\|\rho_\varepsilon \|_{L^p(\Omega_T)}\big)\\ \label{trinfiniforc}
&\leq& C\big( \|c_0 \|_{W^{2,p}(\mathbb{R}^d)}+M_0^{\frac{1}{p}}\|\rho_\varepsilon \|^{\frac{p-1}{p}}_{L^\infty(\Omega_T)}\big)\leq C,
\end{eqnarray}
for any $p>1$.

Recall the parabolic equation
\begin{eqnarray}\label{heatequ}
\partial_t\rho^\varepsilon-\triangle\rho^\varepsilon= -\nabla\rho^\varepsilon \cdot\nabla c^\varepsilon-\triangle c^\varepsilon\rho^\varepsilon=: Z^\varepsilon.
\end{eqnarray}
Using  \eqref{rhoest}, \eqref{Linfiniforc} and \eqref{trinfiniforc}, one has
\begin{eqnarray}\nonumber\label{regula1}
\|Z^\varepsilon\|_{L^2(\Omega_T)}&\leq& \|\nabla c^\varepsilon \|_{L^\infty(\Omega_T)} \| \nabla \rho_\varepsilon\|_{L^2(\Omega_T)}+\|\rho_\varepsilon\|_{L^6(\Omega_T)}\|\triangle c^\varepsilon \|_{L^3(\Omega_T)}\\
&\leq& C.
\end{eqnarray}

From \eqref{heatequ}, one has the fact
\begin{eqnarray}
\rho_\varepsilon(t)=\gamma_t\ast_{t,x} Z^\varepsilon+\gamma_t\ast_x \rho_0;\quad \nabla\rho_\varepsilon=\Gamma_t\ast_{t,x} Z^\varepsilon+\gamma_t\ast_{x} \nabla\rho_0,
\end{eqnarray}
where
\begin{eqnarray}
\gamma_t(x)=\frac{e^{-\frac{|x|^2}{4t} } }   {(4\pi t)^{\frac{d}{2}}}\in L^{p_1}(0,T;L^{p_2}(\mathbb{R}^{d})), \quad\forall~ p_1, p_2 \geq 1, ~~\frac{2}{dp_1}+\frac{1}{p_2}>1,
\end{eqnarray}
and
\begin{eqnarray}
\Gamma_t(x)=\nabla_x\gamma_t(x)\in L^{q_1}(0,T;L^{q_2}(\mathbb{R}^{d})), \quad\forall~ q_1, q_2 \geq 1, ~~\frac{2}{q_1}+\frac{d}{q_2}>d+1,
\end{eqnarray}
which provide the bound (with the choice $q_1=q_2=(\frac{d+2}{d+1})^-$)
\begin{eqnarray}
\|\nabla\rho_\varepsilon\|_{L^p(\Omega_T)}\leq C \|\Gamma\|_{L^{q_1}(\Omega_T)}\|Z^\varepsilon\|_{L^2(\Omega_T)}+\|\gamma\|_{L^p(0,T; L^1(\mathbb{R}^d))} \|\nabla\rho_0\|_{L^p(\mathbb{R}^d)}
\end{eqnarray}
for any $p\in [1, 2+\frac{4}{d})$. Combining \eqref{trinfiniforc}, one has
\begin{eqnarray}\nonumber
\|Z^\varepsilon\|_{L^p(\Omega_T)}&\leq& \|\nabla c^\varepsilon \|_{L^\infty(\Omega_T)} \| \nabla \rho_\varepsilon\|_{L^p(\Omega_T)}+\|\rho_\varepsilon\|_{L^\infty(\Omega_T)}\|\triangle c^\varepsilon \|_{L^p(\Omega_T)}\\
&\leq& C, \quad \quad \forall~ p\in [1, 2+\frac{4}{d}).
\end{eqnarray}
By a bootstrap argument of the regularity property of the heat equation, one has
\begin{eqnarray}
\|Z^\varepsilon\|_{L^p(\Omega_T)}\leq C, \quad \quad \forall~ p\in [1, d+1).
\end{eqnarray}
By the maximal regularity of heat equation in $L^p$ space (see \cite[Theorem X.12]{B1983}), we easily get
\begin{eqnarray}\label{regula2}
\|\rho_\varepsilon\|_{W^{2,1}_p(\Omega_T)}\leq C (\|\rho_0\|_{L^p(\Omega_T)}+\|Z^\varepsilon\|_{L^p(\Omega_T)})\leq C, \quad \forall~ p\in [1,d+1).
\end{eqnarray}
Based on \eqref{regula2}, using the Morrey's inequality, one has
\begin{eqnarray}
\|\rho_\varepsilon\|_{C^{0,\alpha}(\overline{\Omega}_T)}\leq C\|\rho_\varepsilon\|_{W^{1,p}(\Omega_T)}\leq C, \mbox{~for~some~} 0<\alpha<1.
\end{eqnarray}
Then, we also have
\begin{eqnarray}
\|\rho_\varepsilon\|_{C^{\alpha,\frac{\alpha}{2}}(\overline{\Omega}_T)}\leq C \|\rho_\varepsilon\|_{C^{0,\alpha}(\overline{\Omega}_T)} \leq C, \mbox{~for~some~} 0<\alpha<1,
\end{eqnarray}
where the space $C^{\alpha,\frac{\alpha}{2}}(\overline{\Omega}_T)$ is defined by
$$
C^{\alpha,\frac{\alpha}{2}}(\overline{\Omega}_T)=\{u\in C(\overline{\Omega}_T)| \sup\limits_{(t,x)\neq (s,y)\in\overline{\Omega}_T } \frac{|u(t,x)-u(s,y)|}{|t-s|^{\frac{\alpha}{2}}+|x-y|^{\alpha}}<+\infty \}
$$
with the norm
$$
\|u\|_{C^{\alpha,\frac{\alpha}{2}}(\overline{\Omega}_T)}= \sup\limits_{(t,x)\in\overline{\Omega}_T } |u(t,x)|+\sup\limits_{(t,x)\neq (s,y)\in\overline{\Omega}_T } \frac{|u(t,x)-u(s,y)|}{|t-s|^{\frac{\alpha}{2}}+|x-y|^{\alpha}}.
$$
By the classical Schauder estimates for the  heat equation (see \cite[Theorem X.13]{B1983}), one has
\begin{eqnarray}
\|c_\varepsilon\|_{C^{2+\alpha,1+\frac{\alpha}{2}}(\overline{\Omega}_T )}\leq C\big( \|\rho_\varepsilon\|_{C^{\alpha,\frac{\alpha}{2}}(\overline{\Omega}_T)}+ \|c_0\|_{C^{2+\alpha}(\mathbb{R}^{d} )}\big),
 \end{eqnarray}
 where
\begin{eqnarray}\nonumber
 \|c_\varepsilon\|_{C^{2+\alpha,1+\frac{\alpha}{2}}(\overline{\Omega}_T)}=\sum\limits_{ 0\leq2r+|s|\leq 2 } \| D^r_tD_x^sc_\varepsilon\|_{L^\infty(\overline{\Omega}_T)}
 +\|\partial_tc_\varepsilon\|_{C^{\alpha,\frac{\alpha}{2}}(\overline{\Omega}_T)}+\|D^2c_\varepsilon\|_{C^{\alpha,\frac{\alpha}{2}}(\overline{\Omega}_T)}
 \end{eqnarray}
By \eqref{heatequ}, one obtains
 \begin{eqnarray}
  \|\rho_\varepsilon\|_{C^{2+\alpha,1+\frac{\alpha}{2}}(\overline{\Omega}_T)}\leq C\big( \|c_\varepsilon\|_{C^{2+\alpha,1+\frac{\alpha}{2}}(\overline{\Omega}_T)}+ \|\rho_0\|_{C^{2+\alpha}(\mathbb{R}^{d})}\big),
 \end{eqnarray}
 which finishes the proof.
\end{proof}

\subsection{Compactness argument and  solvability of the intermediate PDEs}
Before giving the convergence results,  we  introduce the definition of weak solution to the parabolic-parabolic KS equation \eqref{diffaggKSeq} which we deal with throughout this paper. Indeed,
we ask for more regularities than needed for the definition and these regularities will be
proved in Theorem \ref{conthe}.
\begin{defn} \label{weaksolutionpde}(weak solution)
Given the initial data $(\rho_0(x), c_0(x))$ satisfying the conditions \eqref{inicon1} and \eqref{inicon2}. Let $T>0$, we shall say that $(\rho(t, x),c(t,x))$ is a weak solution to \eqref{diffaggKSeq} with the initial data $(\rho_0, c_0)$ if it satisfies:
\begin{itemize}
\item[1. ] Regularity:
\begin{eqnarray*}
\rho\in L^2\big(0,T; H^1(\mathbb{R}^{d})\big)\cap  L^\infty\big(0,T; L^2(\mathbb{R}^{d})\big), \quad \partial_t\rho\in L^2\big(0,T; H^{-1}(\mathbb{R}^{d})\big),\\
\int_{\mathbb{R}^{d}}|x|\rho (t,x) dx <\infty \quad\quad  \mbox{~for~all~} 0\leq t\leq T.\quad\quad\quad\quad\quad\quad
 \end{eqnarray*}
\item[2. ] For all $\varphi\in {C}_0^\infty(\mathbb{R}^d)$ and $0<t\leq T$, the following holds,
\begin{eqnarray}\label{Sindensityfun}\nonumber
&&\int_{\mathbb{R}^d}\rho(t,x)\varphi(x)dx-\int_{\mathbb{R}^d}\rho_0(x)\varphi(x)dx-\int_0^t\int_{\mathbb{R}^d}  \rho(s,x)\Delta \varphi(x) dxds\\
&&\qquad\qquad=\int_0^t\int_{\mathbb{R}^d}\rho(s,x)\nabla c(x)\cdot\nabla\varphi(x)dxds,
\end{eqnarray}
where $c$ is the  mild solution to the second equation of \eqref{diffaggKSeq}, i.e.
\begin{eqnarray}\label{cexpression}
c=e^{-\lambda t} e^{t\Delta} c_0+ \int_0^{t} e^{\lambda(s-t)} e^{(t-s)\Delta}\rho ds.
\end{eqnarray}
\end{itemize}
\end{defn}

Utilizing the uniform estimates obtained in Proposition \ref{uniest}, we have
the following convergence results. The proof will be given in the Appendix.
\begin{thm}\label{conthe}
Let $(\rho_\varepsilon, c_\varepsilon)$ be solutions to \eqref{intermeKSeq} with the initial data $(\rho_0, c_0)$. Assume that $(\rho_0, c_0)$ satisfy the conditions \eqref{inicon1} and \eqref{inicon2}, then there is a subsequence of $(\rho_\varepsilon, c_\varepsilon)$(without relabeling for convenience) and functions $(\rho, c)$ such that as $\varepsilon \rightarrow 0$
\begin{eqnarray}\label{comcon1}
\rho_\varepsilon \rightharpoonup \rho \mbox{~in~} L^\infty\big(0,T;  L^2(\mathbb{R}^d)\big)\\ \label{comcon2}
\nabla\rho_\varepsilon \rightharpoonup \nabla\rho \mbox{~in~} L^2\big(0,T; L^2(\mathbb{R}^d)\big)\\ \label{comcon3}
\rho_\varepsilon \rightarrow \rho \mbox{~in~} L^2\big(0,T; L^2(B_R)\big)  \mbox{~for~any~ball~} B_R \\ \label{comcon4}
c_\varepsilon \rightarrow c \mbox{~in~} L^\infty\big(0,T; L^2(B_R)\big) \mbox{~for~any~ball~} B_R
\end{eqnarray}
where $B_R$ is a ball centered at $0$  with radius $R$ and $(\rho, c)$ is a weak solution to \eqref{diffaggKSeq} with the following regularities:
\begin{eqnarray}\label{rhoregularity}
&&\rho\in L^2\big(0,T; H^1(\mathbb{R}^{d})\big)\cap  L^\infty\big(0,T; L^2(\mathbb{R}^{d})\big),\nonumber \\
&&\partial_t\rho\in L^2\big(0,T; H^{-1}(\mathbb{R}^{d})\big), \quad \int_{\mathbb{R}^{d}}|x|\rho (t,x) dx <\infty.
\end{eqnarray}
Furthermore, if $(\rho_0, c_0)$ also satisfy the condition \eqref{inicon3}, then the solution satisfies
\begin{eqnarray}\label{higeregularity}
\|\rho\|_{C^{2,1}(\overline{\Omega}_T)}+\| c\|_{C^{2,1}(\overline{\Omega}_T)}\leq C,
\end{eqnarray}
which means that this solution is a classical solution.
\end{thm}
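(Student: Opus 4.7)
The plan is to combine the uniform estimates from Proposition \ref{uniest} and Proposition \ref{higeuniest} with a standard Aubin--Lions compactness argument, and then pass to the limit in the weak formulation. The only subtle points are (i) the nonlinear drift term $\rho_\varepsilon \nabla c_\varepsilon$, which needs strong convergence of at least one factor, and (ii) reconciling the $\varepsilon$-cutoff in the integral defining $c_\varepsilon$ with the full time integral in the mild formulation \eqref{cexpression} of $c$.

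First, Proposition \ref{uniest} supplies $\rho_\varepsilon$ bounded in $L^\infty(0,T;L^2(\mathbb{R}^d))\cap L^2(0,T;H^1(\mathbb{R}^d))$ with $\partial_t \rho_\varepsilon$ bounded in $L^2(0,T;H^{-1}(\mathbb{R}^d))$, uniformly in $\varepsilon$. The Banach--Alaoglu theorem immediately yields a subsequence with the weak convergences \eqref{comcon1} and \eqref{comcon2}. For \eqref{comcon3}, I would apply the Aubin--Lions lemma on each ball $B_R$: the embedding $H^1(B_R)\hookrightarrow L^2(B_R)$ is compact and $L^2(B_R)\hookrightarrow H^{-1}(B_R)$ is continuous, so the uniform bounds give strong convergence in $L^2(0,T;L^2(B_R))$. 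To obtain \eqref{comcon4}, I would use the mild representation $c_\varepsilon = e^{-\lambda t}e^{t\Delta}c_0+\int_0^{t-\varepsilon} e^{\lambda(s-t)}e^{(t-s)\Delta}\rho_\varepsilon(s)\,ds$ and compare with \eqref{cexpression}. The tail $\int_{t-\varepsilon}^{t}$ is controlled in $L^2$ by $\|\rho_\varepsilon\|_{L^\infty(0,T;L^2)}\sqrt{\varepsilon}$ (using $L^p\to L^p$ estimates of the heat semigroup), and the bulk converges thanks to \eqref{comcon3} after a diagonal/truncation argument in $x$, yielding uniform-in-time convergence on every ball.

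Next, passing to the limit in the weak formulation \eqref{Sindensityfun}. For a test function $\varphi\in C_0^\infty(\mathbb{R}^d)$ with $\mathrm{supp}\,\varphi\subset B_R$, the linear terms are handled by \eqref{comcon1}--\eqref{comcon2}. For the drift term $\int_0^t\!\!\int_{\mathbb{R}^d}\rho_\varepsilon\nabla c_\varepsilon\cdot\nabla\varphi\,dxds$, I would write it as a product of the locally strongly convergent factor $\rho_\varepsilon\to\rho$ in $L^2(0,T;L^2(B_R))$ against $\nabla c_\varepsilon$, which by \eqref{estpara2} is bounded in $L^\infty(0,T;L^p(\mathbb{R}^d))$ for suitable $p$ and converges weakly to $\nabla c$ after an application of Lemma \ref{estiofpara} to the difference $c_\varepsilon-c$. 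The combination of local strong convergence of $\rho_\varepsilon$ with weak convergence of $\nabla c_\varepsilon$ against the compactly supported $\nabla\varphi$ is enough to pass to the limit. The regularity \eqref{rhoregularity} is inherited from the uniform bounds by lower semicontinuity of weak limits together with the first-moment estimate \eqref{1moment}.

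Finally, for the higher regularity \eqref{higeregularity}, Proposition \ref{higeuniest} provides $\|\rho_\varepsilon\|_{C^{2,1}(\overline{\Omega}_T)}+\|c_\varepsilon\|_{C^{2,1}(\overline{\Omega}_T)}\leq C$ uniformly in $\varepsilon$. By Arzelà--Ascoli on compact subsets of $\overline{\Omega}_T$ and a diagonal extraction, $(\rho_\varepsilon,c_\varepsilon)$ converge (up to subsequence) in $C^{2,1}_{\mathrm{loc}}$, and the limit $(\rho,c)$ satisfies the same bound. This upgrade to classical regularity then allows one to pass pointwise to the limit in the PDE itself, showing $(\rho,c)$ is in fact a classical solution of \eqref{diffaggKSeq}. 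The main obstacle is step (ii) above: the tail $\int_{t-\varepsilon}^{t}e^{\lambda(s-t)}e^{(t-s)\Delta}\rho_\varepsilon\,ds$ must be shown to vanish in a topology strong enough that the cross term $\rho_\varepsilon\nabla c_\varepsilon$ can still be linearized; the $L^p$--$L^p$ heat-semigroup estimates from Lemma \ref{estiofpara} together with the short-time length $\varepsilon$ of the interval supply the needed decay.
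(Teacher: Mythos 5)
Your proposal is correct and follows essentially the same route as the paper's appendix proof: Banach--Alaoglu for the weak limits \eqref{comcon1}--\eqref{comcon2}, Aubin--Lions on balls for \eqref{comcon3}, the mild-representation comparison $c_\varepsilon - c = \int_0^t e^{\lambda(s-t)}e^{(t-s)\Delta}(\rho_\varepsilon-\rho)\,ds - \int_{t-\varepsilon}^{t}e^{\lambda(s-t)}e^{(t-s)\Delta}\rho_\varepsilon\,ds$ together with the $L^2\to L^2$ heat-semigroup bound for \eqref{comcon4}, and the two-term splitting $\rho_\varepsilon\nabla c_\varepsilon-\rho\nabla c=(\rho_\varepsilon-\rho)\nabla c_\varepsilon+\rho(\nabla c_\varepsilon-\nabla c)$ to pass to the limit in the drift term. (Two trivia: the tail term is actually $O(\varepsilon)$ rather than $O(\sqrt\varepsilon)$, since one integrates the $L^\infty_t L^2_x$ bound over a length-$\varepsilon$ interval; and the paper uses the uniform $L^\infty$ bound \eqref{Linfiniforc} on $\nabla c_\varepsilon$ rather than an $L^p$ bound, though either suffices.) Your handling of \eqref{higeregularity} by Arzelà--Ascoli from Proposition \ref{higeuniest}'s uniform $C^{2,1}$ bound is, if anything, slightly more explicit than what the paper records.
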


\subsection{Estimate between $\rho_\varepsilon$ and $\rho$ in Kantorovich-Rubinstein distance}
We introduce the following mean-field self-consistent stochastic process $({X}_t)_{t\geq0}$:
\begin{eqnarray}\label{stoKSfin}\nonumber
X_t &=&X_0+ \int_0^t\int_0^{s}\int_{\mathbb{R}^{d}} \frac{e^{-\frac{|y|^2}{4 (s-r)} +\lambda (r-s) } }   {\big(4\pi(s-r)\big)^{\frac{d}{2}}}   \nabla\rho_r(X_s+y)dy drds\\
&&+ \int_0^te^{-\lambda s}\int_{\mathbb{R}^{d}}\frac{e^{-\frac{|{X}_s-y|^2}{4s} } }   {(4\pi s)^{\frac{d}{2}}}\nabla c_0(y)dyds+\sqrt{2}B_t, ~~\hspace{0.2cm} t\geq 0,
\end{eqnarray}
where we require $({X}_t)_{t\geq0}$ possessing a marginal density $(\rho_t)_{t\geq0}$ for any $t\geq 0$. By It\^{o}'s formula, we know that $\rho$ is a weak solution to the KS equation \eqref{diffaggKSeq}.  So we call \eqref{stoKSfin} the corresponding SDE of \eqref{diffaggKSeq}.

The drift term is
 \begin{eqnarray}
B_{[\rho]}(t,x)= \int_0^{t}\int_{\mathbb{R}^{d}} \frac{e^{-\frac{|y|^2}{4 (t-s)} +\lambda (s-t) } }   {\big(4\pi(t-s)\big)^{\frac{d}{2}}}   \nabla\rho_s(x+y)dy ds+e^{-\lambda t}\int_{\mathbb{R}^{d}}\frac{e^{-\frac{|x-y|^2}{4t} } }   {(4\pi t)^{\frac{d}{2}}}\nabla c_0(y)dy,
\end{eqnarray}
 which has the following properties:
 \begin{lem} \label{Hlpnoeps}
Assume that
$$\|\nabla c_0\|_{L^\infty(\mathbb{R}^{d})}+\|D^2c_0\|_{L^\infty(\mathbb{R}^{d})}+\|\nabla\rho\|_{L^\infty(\overline{\Omega}_T)}+\|D^2\rho\|_{L^\infty(\overline{\Omega}_T)}<+\infty,
$$
then
\begin{enumerate}
  \item $B_{[\rho]}(t,x)$ is continuous in $[0,T]\times\mathbb{R}^{d}$;
   \item  $|B_{[\rho]}(t,x)|\leq C$,  for all $(t,x)\in [0,T]\times \mathbb{R}^{d}$, where $C$ depends only on $\|\nabla c_0\|_{L^\infty(\mathbb{R}^{d})}$ and $\|\nabla\rho\|_{L^\infty(\overline{\Omega}_T)}$;
  \item $B_{[\rho]}(t,x)$ is Lipschitz with respect to x,
i.e.  there exists a constant $L$ depending only on $\|D^2c_0\|_{L^\infty(\mathbb{R}^{d})}$ and $\|D^2\rho\|_{L^\infty(\overline{\Omega}_T)}$, such that for $t\in[0,T], x_1, x_2 \in \mathbb{R}^{d}$ ,
$$
\big|B_{[\rho]}(t,x_1)-B_{[\rho]}(t,x_2) |\leq{L\big|x_1-x_2|}.
$$
\end{enumerate}
\end{lem}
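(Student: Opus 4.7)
The plan is to verify the three properties directly by exploiting the fact that, in the expression for $B_{[\rho]}$, the $x$-dependence has been moved inside $\nabla\rho$ and the heat kernel has been rewritten so that its mass is $1$ and the singular factor $(t-s)^{-1}$ has disappeared (this is essentially integration by parts compared to the $\varepsilon$-regularized drift $B^\varepsilon_{[f]}$). The hypotheses $\|\nabla\rho\|_\infty + \|D^2\rho\|_\infty < \infty$ and $\|\nabla c_0\|_\infty + \|D^2c_0\|_\infty < \infty$ then make all the estimates elementary.

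For (1), note that both integrands,
\[
(t,x,s,y)\mapsto \frac{e^{-|y|^2/4(t-s)+\lambda(s-t)}}{(4\pi(t-s))^{d/2}}\nabla\rho_s(x+y)\quad\text{and}\quad (t,x,y)\mapsto e^{-\lambda t}\frac{e^{-|x-y|^2/4t}}{(4\pi t)^{d/2}}\nabla c_0(y),
\]
are continuous in $(t,x)$ and dominated, respectively, by $\|\nabla\rho\|_\infty \cdot (4\pi(t-s))^{-d/2}e^{-|y|^2/4(t-s)}$ and $\|\nabla c_0\|_\infty\cdot(4\pi t)^{-d/2}e^{-|x-y|^2/4t}$, which are integrable uniformly in a neighborhood of any $(t_0,x_0)\in[0,T]\times\mathbb{R}^d$. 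I will then invoke dominated convergence to pass the limit inside the integrals.

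For (2), the total mass property $\int_{\mathbb{R}^d}(4\pi u)^{-d/2}e^{-|y|^2/4u}\,dy=1$ together with $e^{\lambda(s-t)}\leq 1$ gives
\[
\Bigl|\int_0^t\!\!\int_{\mathbb{R}^d}\frac{e^{-|y|^2/4(t-s)+\lambda(s-t)}}{(4\pi(t-s))^{d/2}}\nabla\rho_s(x+y)\,dy\,ds\Bigr|\leq T\,\|\nabla\rho\|_{L^\infty(\overline{\Omega}_T)},
\]
and an identical argument using the change of variables $z=y-x$ bounds the $c_0$-term by $\|\nabla c_0\|_\infty$, yielding $|B_{[\rho]}(t,x)|\leq T\|\nabla\rho\|_\infty+\|\nabla c_0\|_\infty$.

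For (3), the key step is to shift variables so that $x$ appears only in $\nabla\rho_s(x+y)$ in the first term and only in $\nabla c_0(x+z)$ in the second (after $z=y-x$). Then for $x_1,x_2\in\mathbb{R}^d$, the mean value inequality gives $|\nabla\rho_s(x_1+y)-\nabla\rho_s(x_2+y)|\leq\|D^2\rho\|_\infty|x_1-x_2|$ and likewise for $\nabla c_0$, and integrating against the (unit-mass) heat kernels as in (2) produces
\[
|B_{[\rho]}(t,x_1)-B_{[\rho]}(t,x_2)|\leq\bigl(T\|D^2\rho\|_{L^\infty(\overline{\Omega}_T)}+\|D^2c_0\|_{L^\infty(\mathbb{R}^d)}\bigr)|x_1-x_2|,
\]
which identifies the Lipschitz constant $L$. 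There is no real obstacle here beyond careful bookkeeping; the essential point, in contrast to Lemma~\ref{Hlp}, is that no $\varepsilon$-cut-off is required because the singular factor $(t-s)^{-1}$ from the Green-function derivative has been absorbed into $\nabla\rho$, and the smoothness obtained in Theorem~\ref{conthe} is precisely what is needed to control it.
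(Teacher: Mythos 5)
Your proof is correct and is exactly the direct computation that the paper alludes to when it writes that the argument is ``simple and similar with Lemma~\ref{Hlp}'' and omits it. Your structural observation — that no $\varepsilon$-cutoff is needed because the derivative has already been transferred onto $\nabla\rho$ (and, after the shift $z=y-x$, onto $\nabla c_0$), so the Gaussian kernel enters with unit mass and without the singular factor $(t-s)^{-1}$ — is precisely why the constants $C=T\|\nabla\rho\|_{L^\infty(\overline{\Omega}_T)}+\|\nabla c_0\|_{L^\infty(\mathbb{R}^d)}$ and $L=T\|D^2\rho\|_{L^\infty(\overline{\Omega}_T)}+\|D^2 c_0\|_{L^\infty(\mathbb{R}^d)}$ are uniform, in contrast to the $\varepsilon$-dependent bounds of Lemma~\ref{Hlp}.
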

The proof is simple and similar with Lemma \ref{Hlp}, We omit the process here.
\begin{defn}
For any fixed $T>0$,  initial data ${X}_0$ and given probability space $\big(\Omega, \mathcal{F}, \mathbb{P}\big)$ endowed with a $d$-dimensional $(\mathcal{F}_t)_{t\in[0,T]}$-Brownian motion $({B}_t)_{t\in[0,T]}$, if there is a stochastic process $({X}_t)_{t\in[0,T]}$ adapted to $(\mathcal{F}_t)_{t\in[0,T]}$ and it has a time marginal density $\rho$ (satisfying $\|\nabla\rho\|_{L^\infty(\overline{\Omega}_T)}+\|D^2\rho\|_{L^\infty(\overline{\Omega}_T)}<+\infty$) such that $\big({X}_t,\rho_t\big)_{t\in [0,T]}$ satisfies \eqref{stoKSfin} almost surely (a.s.) in the probability space $\big(\Omega, \mathcal{F}, (\mathcal{F}_t)_{t\geq0}, \mathbb{P}\big)$  for all $t\in[0, T]$, we say that $\big({X}_t,\rho_t\big)_{t\geq0}$  is a global strong solution to \eqref{stoKSfin}.
\end{defn}

\begin{thm}\label{existunifina}
Suppose ${X}_0$ is a random variable with the density  $\rho_0$ and $(\rho_0, c_0)$ satisfy the conditions \eqref{inicon1}, \eqref{inicon2}, \eqref{inicon3}. Then there exists a unique global strong solution to \eqref{stoKSfin}.
\end{thm}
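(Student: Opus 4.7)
\medskip

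\textbf{Proof plan.} My strategy is to decouple the nonlinearity by first solving the PDE side, and only then feeding the resulting drift into a linear SDE. By Theorem \ref{conthe}, under the hypotheses \eqref{inicon1}--\eqref{inicon3} there exists a weak solution $(\rho,c)$ to the KS system \eqref{diffaggKSeq} enjoying the classical regularity $\|\rho\|_{C^{2,1}(\overline{\Omega}_T)}+\|c\|_{C^{2,1}(\overline{\Omega}_T)}\leq C$, and this solution is unique (as noted in Remark 1.1, the condition \eqref{inicon3} upgrades the weak solution to a classical one and guarantees uniqueness). A short computation (change variables $z=x+y$ and pull the gradient outside the heat semigroup) shows that the candidate drift in \eqref{stoKSfin} coincides with $\nabla c$, where $c$ is exactly the mild representation from Definition \ref{weaksolutionpde}; in particular $B_{[\rho]}=\nabla c$ inherits the smoothness of $c$, so Lemma \ref{Hlpnoeps} applies and yields that $B_{[\rho]}(t,x)$ is bounded and globally Lipschitz in $x$, uniformly on $[0,T]$.

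\medskip

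\textbf{Existence.} With $\rho$ fixed, the SDE \eqref{stoKSfin} becomes the linear equation
\begin{equation*}
X_t=X_0+\int_0^t B_{[\rho]}(s,X_s)\,ds+\sqrt{2}\,B_t,\qquad 0\leq t\leq T.
\end{equation*}
Since $B_{[\rho]}$ is bounded and globally Lipschitz, the standard Picard iteration scheme used in the proof of Theorem \ref{S} applies verbatim (and is in fact simpler, as the drift is already local in time and there is no small-time singularity after restricting to smooth $\rho$): one obtains a unique $t$-continuous strong solution $(X_t)_{t\in[0,T]}$ defined on the given probability space. It remains to verify that the time marginal density of this $X_t$ is indeed the same $\rho_t$ that was used to define the drift. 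Applying It\^o's formula to $\varphi(X_t)$ for $\varphi\in C_0^\infty(\mathbb{R}^d)$ and taking expectations, the law $\widetilde{\rho}_t$ of $X_t$ is a weak solution of the linear Fokker--Planck equation
\begin{equation*}
\partial_t\widetilde{\rho}-\Delta\widetilde{\rho}+\nabla\cdot(\widetilde{\rho}\,\nabla c)=0,\qquad \widetilde{\rho}(0,\cdot)=\rho_0.
\end{equation*}
But $\rho$ itself is a classical (hence weak) solution of the same linear equation with the same initial data and the same smooth coefficient $\nabla c$. Since this linear parabolic equation with bounded smooth coefficients admits at most one weak solution in the class determined by the regularity \eqref{rhoregularity}, we conclude $\widetilde{\rho}=\rho$. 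Hence the constructed $(X_t,\rho_t)$ is a strong solution to \eqref{stoKSfin} in the sense of the definition above Theorem \ref{existunifina}.

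\medskip

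\textbf{Uniqueness.} Suppose $(X_t^1,\rho_t^1)$ and $(X_t^2,\rho_t^2)$ are two strong solutions on the given probability space. By the It\^o computation just described, each $\rho^i$ is a weak solution to the full KS system \eqref{diffaggKSeq} with initial data $(\rho_0,c_0)$ and enjoys the regularity forced by the definition. Uniqueness of the classical KS solution (Theorem \ref{conthe} together with Remark 1.1) gives $\rho^1=\rho^2=\rho$, hence the drifts coincide: $B_{[\rho^1]}=B_{[\rho^2]}=\nabla c$. Then $X^1$ and $X^2$ both solve the same linear SDE with Lipschitz drift and the same Brownian motion and initial datum, so by pathwise uniqueness for Lipschitz SDEs (an estimate identical to Step 3 in the proof of Theorem \ref{S}, combined with Gronwall), $X^1\equiv X^2$ a.s.

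\medskip

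\textbf{Main obstacle.} The only genuinely non-trivial point is the identification $\widetilde{\rho}=\rho$, which rests on the uniqueness of the linear Fokker--Planck equation with drift $\nabla c$; this in turn requires the classical regularity of $c$ provided by Proposition \ref{higeuniest} and transferred to the limit by Theorem \ref{conthe}. Everything else is a standard consequence of Lemma \ref{Hlpnoeps} and Picard iteration.
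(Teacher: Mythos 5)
Your existence argument mirrors the paper's: you take the classical solution $(\rho,c)$ produced by Theorem \ref{conthe}, observe that $B_{[\rho]}=\nabla c$ is bounded and Lipschitz via Lemma \ref{Hlpnoeps}, solve the resulting linear SDE, and identify the marginal law of the solution with $\rho$ by uniqueness of the associated linear Fokker--Planck equation. That part is fine.

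The uniqueness step, however, has a genuine gap: it is circular. You deduce $\rho^1=\rho^2$ by invoking ``uniqueness of the classical KS solution (Theorem \ref{conthe} together with Remark 1.1)''. But Theorem \ref{conthe} only establishes \emph{existence} of a classical solution by compactness; it proves nothing about uniqueness. Remark 1.1 merely \emph{asserts} that \eqref{inicon3} guarantees uniqueness, and in the paper that assertion is justified only in Corollary \ref{uniquepde} --- whose proof explicitly uses Corollary \ref{relationsdepde} and the estimate \eqref{uniqenesssdetrong}, both of which come out of the proof of Theorem \ref{existunifina} itself. So you are using a consequence of the theorem to prove the theorem. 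The paper avoids this by proving uniqueness of the strong SDE solution \emph{directly}: given two strong solutions $(X_t,\rho_t)$ and $(\bar X_t,\bar\rho_t)$, it writes $|B_{[\bar\rho]}(s,\bar X_s)-B_{[\rho]}(s,X_s)|$, handles the $x$-difference via the Lipschitz bounds on $D^2 c_0$ and $D^2\bar\rho$ supplied by the definition of strong solution, and handles the $\rho$ versus $\bar\rho$ difference by introducing an independent copy $(\bar Y,Y)$ of $(\bar X, X)$ and estimating $\mathbb{E}[I]\le \|D^2\rho\|_{L^\infty}\int_0^s \mathbb{E}[\sup_{\tau\le r}|\bar X_\tau - X_\tau|]\,dr$; Gronwall then gives $\mathbb{E}[\sup_{t\le T}|\bar X_t - X_t|]=0$, and $\bar\rho=\rho$ follows from $\mathcal{M}_T(\bar\rho,\rho)\le \mathbb{E}[\sup_{t\le T}|\bar X_t - X_t|]$. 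To close the gap you either need to reproduce this coupling argument, or supply an independent PDE proof that two classical KS solutions with the same data coincide --- you cannot simply cite Remark 1.1.
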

\begin{proof}
For any given Brownian motion $({B}_t)_{t\in[0,T]}$ in \eqref{stoKSfin},  equation \eqref{sto1equ} is equivalent to the following equation:
\begin{eqnarray}\label{sto1equhigh}\nonumber
\bar{X}_t^{\varepsilon} &=&X_0+\int_0^t\int_0^{s-\varepsilon}\int_{\mathbb{R}^{d}} \frac{e^{-\frac{|y|^2}{4 (s-r)} +\lambda (r-s) } }   {\big(4\pi(s-r)\big)^{\frac{d}{2}}}   \nabla\rho^\varepsilon_r(\bar{X}^{\varepsilon}_s+y)dy drds\\
&&+ \int_0^te^{-\lambda s}\int_{\mathbb{R}^{d}}\frac{e^{-\frac{|\bar{X}_s^{\varepsilon}-y|^2}{4s} } }   {(4\pi s)^{\frac{d}{2}}}\nabla c_0(y)dyds+\sqrt{2}B_t, ~~\hspace{0.2cm} t\geq 0.
\end{eqnarray}
By It\^{o}'s formula, one knows that the time marginal density ${\rho}^\varepsilon(t,x)$ is a weak solution to \eqref{intermeKSeq}. From Proposition \ref{higeuniest}, one knows that
\begin{eqnarray}
\|\rho^\varepsilon\|_{C^{2,1}(\overline{\Omega}_T)}+\| c^\varepsilon\|_{C^{2,1}(\overline{\Omega}_T)}\leq C.
\end{eqnarray}
From Theorem \ref{conthe}, one knows that there is a subsequence of $(\rho_\varepsilon, c_\varepsilon)$(without relabeling for convenience) and functions $(\rho, c)$ such that as $\varepsilon \rightarrow 0$,  $(\rho_\varepsilon, c_\varepsilon)$ converge to $(\rho, c)$ in the sense of
\eqref{comcon1}--\eqref{comcon4}. One also has
\begin{eqnarray}
\|\rho\|_{C^{2,1}(\overline{\Omega}_T)}+\| c\|_{C^{2,1}(\overline{\Omega}_T)}\leq C.
\end{eqnarray}
Taking $\rho$ into the function $B_{[\rho]}(t,x)$ in Lemma \ref{Hlpnoeps}, one knows that $B_{[\rho]}(t,x)$ is bounded and Lipschitz continuous. Then there exists a unique stochastic process $({X}_t)_{t\geq0}$ such that
\begin{eqnarray}\label{stoKSfinno}\nonumber
X_t &=&X_0+ \int_0^t\int_0^{s}\int_{\mathbb{R}^{d}} \frac{e^{-\frac{|y|^2}{4 (s-r)} +\lambda (r-s) } }   {\big(4\pi(s-r)\big)^{\frac{d}{2}}}   \nabla\rho_r(X_s+y)dy drds\\
&&+ \int_0^te^{-\lambda s}\int_{\mathbb{R}^{d}}\frac{e^{-\frac{|{X}_s-y|^2}{4s} } }   {(4\pi s)^{\frac{d}{2}}}\nabla c_0(y)dyds+\sqrt{2}B_t, ~~\hspace{0.2cm} t\geq 0,
\end{eqnarray}
and $({X}_t)_{t\geq0}$ admits a time marginal  density denoted by $\tilde{\rho}(t,x)$ (see \cite[Theorem 9.1.9]{SV1979}).

For any $\varphi(x)\in C_b^2(\mathbb{R}^d)$, the It\^{o}'s formula states that
\begin{eqnarray}\label{itoresult1}\nonumber
\varphi(X_t)&=&\varphi(X_0)+\int_0^t\nabla\varphi(X_s)\cdot B_{[\rho]}\big(s,{X}_s\big)ds\\
&&+\sqrt{2}\int_0^t\nabla\varphi(X_s)\cdot d{B}_s+\int_0^t\triangle \varphi(X_s)ds.
\end{eqnarray}
Taking  expectation of \eqref{itoresult1}, $\tilde{\rho}$ is a weak solution to the following linear Fokker-Planck equation:
\begin{eqnarray}\label{regkineticequs11}
\left\{\begin{array}{l}
\partial_t\tilde{\rho}(t,x)=\triangle \tilde{\rho}(t,x)-\nabla \cdot[B_{[\rho]}(t,x)\tilde{\rho}(t,x)], \quad x\in\mathbb{R}^d, \quad t>0,\\
\tilde{\rho}(0,x)=\rho_0(x).
\end{array}\right.
\end{eqnarray}
Since $\rho$  is a weak solution to \eqref{diffaggKSeq}  by Theorem \ref{conthe}, then it is also a weak solution to \eqref{regkineticequs11}. And the weak solution of \eqref{regkineticequs11}  in the class of $L^\infty\big(0,T; L^2(\mathbb{R}^{d})\big)\cap L^2\big(0,T; H^1(\mathbb{R}^{d})\big)$  is unique, then
$$\tilde{\rho}=\rho,$$
which means that $({X}_t, \rho(t,x))_{t\geq0}$ is a strong solution to \eqref{stoKSfin}.

Now we prove the uniqueness of strong solution to \eqref{stoKSfin}. Assume that $({X}_t, \rho_t)_{t\geq0}, (\bar{X}_t,\bar{\rho}_t)_{t\geq0}$ are two strong solutions to  \eqref{stoKSfin} with the same initial data and Brownian motion. Then
\begin{eqnarray}\label{uniq}
\bar{X}_t-{X}_t=\int_0^t\big(B_{[\bar{\rho}]}(s,\bar{X}_s)-B_{[\rho]}(s,{X}_s)\big)ds.
\end{eqnarray}
Taking expectation of \eqref{uniq}, one has
\begin{eqnarray}\label{uniquenessst}
\mathbb{E}[\sup\limits_{t\in[0,T]}|\bar{X}_t-{X}_t|]\leq \int_0^T\mathbb{E}\big[|B_{[\bar{\rho}]}(s,\bar{X}_s)-B_{[\rho]}(s,{X}_s)|\big]ds.
\end{eqnarray}
One also has
\begin{eqnarray}\label{uniqueness11}\nonumber
&&|B_{[\bar{\rho}]}(s,\bar{X}_s)-B_{[\rho]}(s,{X}_s)|\\ \nonumber
&\leq& \big|\int_0^{s}\int_{\mathbb{R}^{d}} \frac{e^{-\frac{|y|^2}{4 (s-r)} +\lambda (r-s)  } }   {\big(4\pi(s-r)\big)^{\frac{d}{2}}}   (\nabla \bar{\rho}_r(\bar{X}_s+y)- \nabla \bar{\rho}_r(X_s+y))dy dr\big|\\ \nonumber
&&+\big|\int_0^{s}\int_{\mathbb{R}^{d}} \frac{e^{-\frac{|y|^2}{4 (s-r)} +\lambda (r-s)  } }   {\big(4\pi(s-r)\big)^{\frac{d}{2}}}   (\nabla \bar{\rho}_r(X_s+y)- \nabla\rho_r(X_s+y))dy dr\big|\\ \nonumber
&&+\int_{\mathbb{R}^{d}}\frac{e^{-\frac{|y|^2}{4s} } }   {(4\pi s)^{\frac{d}{2}}}|\nabla c_0(y+\bar{X}_s)-\nabla c_0(y+{X}_s)|dy\\ \nonumber
&\leq&  \big(\|D^2c_0\|_{L^\infty(\mathbb{R}^{d})}+\|D^2\bar{\rho}\|_{L^\infty(\overline{\Omega}_T)}s\big) |\bar{{X}}_s-{X}_s|\\
&&+\big|\int_0^{s}\int_{\mathbb{R}^{d}} \frac{e^{-\frac{|y|^2}{4 (s-r)} +\lambda (r-s)  } }   {\big(4\pi(s-r)\big)^{\frac{d}{2}}}   \big(\nabla \bar{\rho}_r(X_s+y)- \nabla\rho_r(X_s+y)\big)dy dr\big|.
\end{eqnarray}
Denote $I=:\big|\int_0^{s}\int_{\mathbb{R}^{d}} \frac{e^{-\frac{|y|^2}{4 (s-r)} +\lambda (r-s)  } }   {\big(4\pi(s-r)\big)^{\frac{d}{2}}}   \big(\nabla \bar{\rho}_r(X_s+y)- \nabla\rho_r(X_s+y)\big)dy dr\big|$.
Plugging \eqref{uniqueness11} into \eqref{uniquenessst}, one has
\begin{eqnarray}\label{uniqueness12}
\mathbb{E}[\sup\limits_{t\in[0,T]}|\bar{X}_t-{X}_t|]\leq  C_T\int_0^T\mathbb{E}[\sup\limits_{\tau\in[0,s]}|\bar{X}_\tau-{X}_\tau|+I]ds.
\end{eqnarray}
Suppose $(\bar{{Y}_t}; {Y}_t) $ is an independent copy
of $(\bar{{X}}_t; {X}_t)$, we have
\begin{eqnarray}\label{uniqueness13}\nonumber
\mathbb{E}[I]&=&\mathbb{E}_x\big[\big|\int_0^{s}\int_{\mathbb{R}^{d}} \frac{e^{-\frac{|{X}_s-y|^2}{4 (s-r)} +\lambda (r-s) } }   {\big(4\pi(s-r)\big)^{\frac{d}{2}}}  \frac{y-{X}_s}{2(s-r)} (\bar{\rho}_r(y)-\rho_r(y))dy dr\big|\big]\\ \nonumber
&=&\mathbb{E}_x\mathbb{E}_y\big[\big|\int_0^{s} \Big(\frac{e^{-\frac{|{X}_s-\bar{Y}_r|^2}{4 (s-r)} +\lambda (r-s) } }   {\big(4\pi(s-r)\big)^{\frac{d}{2}}}  \frac{\bar{Y}_r-{X}_s}{2(s-r)} -\frac{e^{-\frac{|{X}_s-{Y}_r|^2}{4 (s-r)} +\lambda (r-s) } }   {\big(4\pi(s-r)\big)^{\frac{d}{2}}}  \frac{{Y}_r-{X}_s}{2(s-r)}\Big) dr\big|\big]\\ \nonumber
&=&\mathbb{E}_y\big[\big|\int_0^{s}\int_{\mathbb{R}^{d}} \Big(\frac{e^{-\frac{|x-\bar{Y}_r|^2}{4 (s-r)} +\lambda (r-s) } }   {\big(4\pi(s-r)\big)^{\frac{d}{2}}}  \frac{\bar{Y}_r-x}{2(s-r)} -\frac{e^{-\frac{|x-{Y}_r|^2}{4 (s-r)} +\lambda (r-s) } }   {\big(4\pi(s-r)\big)^{\frac{d}{2}}}  \frac{{Y}_r-x}{2(s-r)}\Big)\rho_s(x) dxdr\big|\big]\\ \nonumber
&=&\mathbb{E}_y\big[\big|\int_0^{s}\int_{\mathbb{R}^{d}} \frac{e^{-\frac{|x|^2}{4 (s-r)} +\lambda (r-s) } }   {\big(4\pi(s-r)\big)^{\frac{d}{2}}}  \big( \nabla\rho_r(\bar{Y}_r+x) -\nabla\rho_r({Y}_r+x)\big)
 dxdr\big|\big]\\ \nonumber
&\leq&\|D^2\rho\|_{L^\infty(\overline{\Omega}_T)}\mathbb{E}\big[\int_0^{s} | \bar{Y}_r-{Y}_r|dr\big] \\
&\leq& \|D^2\rho\|_{L^\infty(\overline{\Omega}_T)}\int_0^{s}\mathbb{E}[\sup\limits_{\tau\in[0,r]}|\bar{{X}}_\tau-{X}_\tau|]dr.
\end{eqnarray}
Plugging \eqref{uniqueness13} into \eqref{uniqueness12}, one has
\begin{eqnarray}\label{uniqueness14}
\mathbb{E}[\sup\limits_{t\in[0,T]}|\bar{X}_t-{X}_t|]\leq  C_T\int_0^T\mathbb{E}[\sup\limits_{\tau\in[0,s]}|\bar{X}_\tau-{X}_\tau|]ds.
\end{eqnarray}
By $\mathbb{E}[|\bar{X}_0-{X}_0|]=0$ and the Gronwall's Lemma, we obtain that $\mathbb{E}[\sup\limits_{t\in[0,T]}|\bar{X}_t-{X}_t|]\equiv0$ and
\begin{eqnarray}\label{uniqenesssdetrong}
\mathcal{M}_T(\bar{\rho},\rho)\leq \mathbb{E}[\sup\limits_{t\in[0,T]}|\bar{X}_t-{X}_t|]=0.
\end{eqnarray}
Therefore $\bar{\rho}=\rho$ and ${X}_t=\bar{X}_t$ a.s. for all $t\geq0$.
\end{proof}
According to the proof of existence of strong solution to \eqref{stoKSfin} and It\^{o}'s formula, we have the following corollary.
\begin{cor}\label{relationsdepde}
Suppose ${X}_0$ is a random variable with the density  $\rho_0$ and $(\rho_0, c_0)$ satisfy the conditions \eqref{inicon1}, \eqref{inicon2}, \eqref{inicon3}. Then the relationship between the weak solution to \eqref{diffaggKSeq} and the strong solution to \eqref{stoKSfin} can be expressed:
 \begin{enumerate}[(i)]
\item If $\rho(t,x)$ is a weak solution to \eqref{diffaggKSeq} with the initial data $\rho_0(x)$, then  for any random variable $X_0$ with the density $\rho_0$, there is a unique process $(X_t)_{t\geq0}$ with the density $\rho$ such that $(X_t, \rho_t)_{t\geq0}$ is a strong solution to \eqref{stoKSfin} with the initial data $(X_0, \rho_0)$.
\item If $(X_t, \rho_t)_{t\geq0}$ is a strong solution to  \eqref{stoKSfin} with the initial data $(X_0, \rho_0)$, then $\rho$ is a weak solution to \eqref{diffaggKSeq} with the initial data $\rho_0$.
 \end{enumerate}
\end{cor}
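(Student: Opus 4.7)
The plan is to recycle the two key computations that already appear inside the proof of Theorem \ref{existunifina}, repackaging them as the two directions of the corollary. The bridge between \eqref{diffaggKSeq} and \eqref{stoKSfin} is the identity
\[
B_{[\rho]}(t,x) = \nabla c(t,x),
\]
where $c$ is the mild solution \eqref{cexpression}; this follows by a change of variables $z=x+y$ inside the convolution kernel and differentiation under the integral sign in the formula for $c$.

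For direction (ii), I would start from a strong solution $(X_t,\rho_t)_{t\geq 0}$ to \eqref{stoKSfin} and apply It\^o's formula to $\varphi(X_t)$ for an arbitrary $\varphi\in C_0^\infty(\mathbb{R}^d)$. Taking expectations, the martingale term vanishes and one is left with
\[
\int_{\mathbb{R}^d}\rho_t\varphi\,dx - \int_{\mathbb{R}^d}\rho_0\varphi\,dx = \int_0^t\!\!\int_{\mathbb{R}^d}\bigl(\Delta\varphi + B_{[\rho]}(s,x)\cdot\nabla\varphi\bigr)\rho_s\,dx\,ds.
\]
Using $B_{[\rho]}=\nabla c$ and integrating by parts in the $\Delta\varphi$ term yields exactly the weak formulation \eqref{Sindensityfun}, while the regularity \eqref{rhoregularity} has already been established for $\rho_t$ via Theorem \ref{conthe} (applied to the smooth solution that $X_t$'s marginals represent), so $\rho$ is indeed a weak solution of \eqref{diffaggKSeq}.

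For direction (i), I would take the given weak solution $\rho$, which under the assumptions \eqref{inicon1}--\eqref{inicon3} enjoys $\|\rho\|_{C^{2,1}(\overline{\Omega}_T)}<\infty$ by Theorem \ref{conthe}, and plug it into $B_{[\rho]}$. Lemma \ref{Hlpnoeps} then provides the boundedness and Lipschitz continuity required by the classical Itô SDE theory, so that for any $X_0$ with density $\rho_0$ and the given Brownian motion there is a unique strong solution $(X_t)_{t\ge 0}$ to the linear SDE
\[
X_t = X_0+\int_0^t B_{[\rho]}(s,X_s)\,ds + \sqrt{2}B_t.
\]
By \cite[Theorem 9.1.9]{SV1979} the law $\tilde\rho_t$ of $X_t$ admits a density, and It\^o's formula (repeated from direction (ii) above) shows $\tilde\rho$ is a weak solution of the \emph{linear} Fokker--Planck equation with the fixed drift $B_{[\rho]}$ and initial datum $\rho_0$. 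But the given $\rho$ solves the same linear equation weakly; hence, by standard uniqueness for linear Fokker--Planck equations with bounded Lipschitz drift in the class $L^\infty(0,T;L^2)\cap L^2(0,T;H^1)$, we conclude $\tilde\rho = \rho$, so that $(X_t,\rho_t)_{t\ge 0}$ is a strong solution of the nonlinear system \eqref{stoKSfin}. Uniqueness of such $(X_t)$ for given $\rho$ is immediate from uniqueness for the linear SDE.

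The only delicate point is ensuring that both $\rho$ and $\tilde\rho$ lie in a common uniqueness class for the linear Fokker--Planck problem: the former does by \eqref{rhoregularity}, and the latter does thanks to the boundedness of $B_{[\rho]}$ together with standard parabolic estimates applied to the Fokker--Planck equation with smooth coefficients. Beyond this observation, every step has essentially been carried out already within Theorem \ref{existunifina}, and the corollary is the distillation of those arguments into a clean equivalence statement.
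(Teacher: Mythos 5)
Your proposal is essentially correct and follows the same strategy that the paper leaves implicit when it remarks ``According to the proof of existence of strong solution to \eqref{stoKSfin} and It\^{o}'s formula, we have the following corollary.'' Your direction (ii) reproduces the It\^{o}-formula-plus-expectation argument already sketched right after \eqref{stoKSfin}, and your direction (i) distils exactly the uniqueness-via-linear-Fokker--Planck argument embedded in the existence part of Theorem \ref{existunifina}. Making the identity $B_{[\rho]}(t,x)=\nabla c(t,x)$ explicit (via differentiation under the integral, commutation of $\nabla$ with the heat semigroup, and the translation $y\mapsto x+y$) is a helpful clarification that the paper does not state. Two small remarks on precision: in direction (ii), the regularity you need for the strong solution's marginal density comes directly from the paper's Definition of strong solution ($\|\nabla\rho\|_{L^\infty}+\|D^2\rho\|_{L^\infty}<\infty$), not from Theorem \ref{conthe}, which only controls the particular limit density; phrasing it as you did risks circularity. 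In direction (i), invoking $\|\rho\|_{C^{2,1}}<\infty$ for an arbitrary weak solution by citing Theorem \ref{conthe} has the same issue --- Theorem \ref{conthe} only proves this for the subsequential limit --- though this looseness is already present in the paper's own use of the corollary inside Corollary \ref{uniquepde}, so you are not introducing any new gap beyond what the original argument contains.
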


\begin{cor}\label{uniquepde}
Suppose $(\rho_0, c_0)$ satisfy the conditions \eqref{inicon1}, \eqref{inicon2} and \eqref{inicon3}. Then the weak solution to \eqref{diffaggKSeq} is unique.
\end{cor}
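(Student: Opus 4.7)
The plan is to reduce uniqueness of weak solutions of the PDE \eqref{diffaggKSeq} to uniqueness of the strong solution of the McKean--Vlasov SDE \eqref{stoKSfin}, which is already in hand from Theorem \ref{existunifina}. Concretely, assume $\rho^{1}$ and $\rho^{2}$ are two weak solutions in the sense of Definition \ref{weaksolutionpde} with the same initial datum $\rho_0$ satisfying \eqref{inicon1}--\eqref{inicon3}, and let $c^{i}$ be given by the mild formula \eqref{cexpression} with $\rho$ replaced by $\rho^{i}$.

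First I would fix an auxiliary probability space $(\Omega,\mathcal{F},\mathbb{P})$ carrying a random variable $X_0$ with law $\rho_0\,dx$ and an independent $d$-dimensional Brownian motion $(B_t)_{t\in[0,T]}$. Applying Corollary \ref{relationsdepde}(i) to each $\rho^{i}$ produces a process $(X_t^{i})_{t\in[0,T]}$ such that $(X_t^{i},\rho_t^{i})$ is a global strong solution of \eqref{stoKSfin} driven by the same $(X_0,B_t)$. Next I invoke the uniqueness part of Theorem \ref{existunifina}: the strong solution of the nonlinear SDE \eqref{stoKSfin} with prescribed $(X_0,B_t)$ is unique, so the argument leading to \eqref{uniqenesssdetrong} gives $\mathbb{E}[\sup_{t\in[0,T]}|X_t^{1}-X_t^{2}|]=0$. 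Taking time marginals yields $\rho^{1}_t=\rho^{2}_t$ for every $t\in[0,T]$, and then \eqref{cexpression} forces $c^{1}=c^{2}$ as well.

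The main obstacle is the implicit regularity upgrade required to apply Corollary \ref{relationsdepde}(i) to an \emph{arbitrary} weak solution rather than to the particular vanishing-$\varepsilon$ limit constructed in Theorem \ref{conthe}: Lemma \ref{Hlpnoeps} (and hence the SDE correspondence) needs $\|\nabla\rho\|_{L^{\infty}(\overline{\Omega}_T)}+\|D^{2}\rho\|_{L^{\infty}(\overline{\Omega}_T)}<+\infty$, which is stronger than the class built into Definition \ref{weaksolutionpde}. To handle this I would redo, for a generic weak solution, exactly the bootstrap performed in Proposition \ref{higeuniest}: upgrade $\rho\in L^{\infty}(0,T;L^{2})\cap L^{2}(0,T;H^{1})$ to $L^{\infty}(\Omega_T)$ by the $L^{r}$ estimates of Lemma \ref{energeyest} (which rely only on \eqref{inicon1}--\eqref{inicon2}), then control $\nabla c$ and $\Delta c$ via \eqref{estpara2}--\eqref{estpara3}, then treat $\partial_t\rho-\Delta\rho=-\nabla\rho\cdot\nabla c-\rho\,\Delta c$ by $L^{p}$-maximal regularity and Morrey's embedding to obtain a uniform $C^{\alpha,\alpha/2}$ estimate, and finally close the loop by parabolic Schauder theory using \eqref{inicon3}. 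Once this regularity is secured, the SDE coupling described above delivers the conclusion immediately.
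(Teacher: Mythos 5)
Your argument follows the paper's own proof almost verbatim: fix $(X_0,B_t)$, apply Corollary~\ref{relationsdepde}(i) to each of the two weak solutions to produce strong solutions $(X_t^{1},\rho^{1}_t)$ and $(X_t^{2},\rho^{2}_t)$ of the McKean--Vlasov SDE \eqref{stoKSfin}, and invoke the pathwise uniqueness estimate \eqref{uniqenesssdetrong} from Theorem~\ref{existunifina} to get $\mathcal{M}_T(\rho^{1},\rho^{2})\le\mathbb{E}[\sup_{t}|X_t^{1}-X_t^{2}|]=0$; equality of $c^{1}$ and $c^{2}$ then follows from the mild formula \eqref{cexpression}. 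So the core of the proposal is exactly the paper's route.

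Where you go beyond the paper, and usefully so, is in flagging the regularity mismatch: Corollary~\ref{relationsdepde}(i) is stated for an arbitrary weak solution, but the SDE correspondence rests on Lemma~\ref{Hlpnoeps}, which needs $\|\nabla\rho\|_{L^{\infty}(\overline\Omega_T)}+\|D^{2}\rho\|_{L^{\infty}(\overline\Omega_T)}<\infty$ --- far more than Definition~\ref{weaksolutionpde} supplies, and the paper offers no proof of Corollary~\ref{relationsdepde}. Your plan to rerun the bootstrap of Proposition~\ref{higeuniest} for a generic weak solution of \eqref{diffaggKSeq} is the right way to close this: the $L^{r}$ energy estimates of Lemma~\ref{energeyest} and the parabolic regularity in Lemma~\ref{estiofpara} (which already covers the $\varepsilon=0$ formula \eqref{cexpression}) only use the structure of the equation and the hypotheses \eqref{inicon1}--\eqref{inicon3}, not the $\varepsilon$-cutoff, so the chain $L^{\infty}(0,T;L^{2})\cap L^{2}(0,T;H^{1})\Rightarrow L^{\infty}(\Omega_T)\Rightarrow W^{2,1}_{p}\Rightarrow C^{\alpha,\alpha/2}\Rightarrow C^{2+\alpha,1+\alpha/2}$ carries over. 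One small caution: the very first step of that chain --- testing the weak formulation against $r\rho^{r-1}$ --- requires a standard truncation/approximation argument since $\rho^{r-1}$ is not a priori an admissible test function for a solution only known to be in $L^{\infty}(0,T;L^{2})\cap L^{2}(0,T;H^{1})$; this is routine but should be acknowledged if you write the argument out in full.
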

\begin{proof}
Suppose $\rho$, $\bar{\rho}$ are two weak solutions to \eqref{diffaggKSeq} with the same initial data $\rho_0$. For any fixed random variable $X_0$ with the density $\rho_0$, by the Corollary \ref{relationsdepde} $(i)$, there exists two processes $(X_t)_{t\geq0}$ and $(\bar{X}_t)_{t\geq0}$ such that $(X_t, \rho_t)_{t\geq0}$ and  $(\bar{X}_t, \bar{\rho}_t)_{t\geq0}$ both are strong solutions to \eqref{stoKSfin}  with the same initial data $(X_0,\rho_0)$. Thus \eqref{uniqenesssdetrong} holds, which gives the uniqueness of \eqref{diffaggKSeq}.
\end{proof}

\begin{rem}
Recently, the uniqueness  of  weak solution  to \eqref{diffaggKSeq} has been concerned by many scholars. In fact, a uniqueness result is established in the class of solutions $\rho\in C([0,T];L^1_2(\mathbb{R}^{2}))\cap L^\infty((0,T)\times \mathbb{R}^{2})$, see \cite{CLM2014}. Carrapatoso and Mischler have improved the uniqueness result: if the initial data
with finite mass $M < 8\pi$, finite second log-moment and finite entropy, uniqueness of the `free energy' solution in $\mathbb{R}^{2}$ is known, see \cite{CMS2014}. Furthermore, when the initial data belong to critical scaling-invariant Lebesgue spaces,  the uniqueness of integral solutions and the uniqueness of self-similar solutions have been analyzed in \cite{CEM2014}. In this paper, we prove the uniqueness of  weak solution  to \eqref{diffaggKSeq} by utilizing the strong solution of \eqref{stoKSfin} as a characteristic line.
\end{rem}

 Next, we compare the trajectories between \eqref{stoKSfin} and \eqref{sto1equhigh}.
\begin{prop}\label{fincoup}
Let $({X}_t, \rho_t)_{t\geq0}$ and $(\bar{X}^{\varepsilon}_t, \rho^\varepsilon_t)_{t\geq0}$ be the unique strong solution to \eqref{stoKSfin} and \eqref{sto1equhigh} respectively,  with the same  initial data ${X}_0$ and Brownian motion $({B} _t)_{t\geq0}$. Then for any  $T>0$,
\begin{eqnarray}\label{sentwoestimates}
\mathbb{E}\big[\sup\limits_{t\in[0,T]}|\bar{{X}}^{\varepsilon}_t-{X}_t|\big]\leq  C_Te^{C_T T} \varepsilon.
\end{eqnarray}
\end{prop}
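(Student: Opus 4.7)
The plan is a trajectorial coupling estimate closed by Gronwall's inequality, mimicking the strategy used in the uniqueness part of Theorem \ref{existunifina}. Since $\bar X^\varepsilon$ and $X$ are driven by the same Brownian motion and start from the same $X_0$, subtracting \eqref{stoKSfin} from \eqref{sto1equhigh} cancels the noise, giving
\[
\bar X^\varepsilon_t-X_t=\int_0^t\bigl[B^\varepsilon_{[\rho^\varepsilon]}(s,\bar X^\varepsilon_s)-B_{[\rho]}(s,X_s)\bigr]\,ds.
\]
I would split the integrand as $\bigl[B^\varepsilon_{[\rho^\varepsilon]}(s,\bar X^\varepsilon_s)-B^\varepsilon_{[\rho^\varepsilon]}(s,X_s)\bigr]+\bigl[B^\varepsilon_{[\rho^\varepsilon]}(s,X_s)-B_{[\rho]}(s,X_s)\bigr]$. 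The first bracket is pointwise bounded by $L\,|\bar X^\varepsilon_s-X_s|$ with $L$ coming from the uniform bounds $\|D^2 c^\varepsilon\|_\infty+\|D^2 c_0\|_\infty\le C$ supplied by Proposition \ref{higeuniest} together with the assumptions on $c_0$; this produces the Gronwall term.

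The same-point drift difference $B^\varepsilon_{[\rho^\varepsilon]}(s,X_s)-B_{[\rho]}(s,X_s)$ then decomposes into a \emph{cutoff tail} $-\int_{s-\varepsilon}^s\!\int K_{s-r}(y)\,e^{\lambda(r-s)}\nabla\rho_r(X_s+y)\,dy\,dr$ and a \emph{density discrepancy} $\int_0^{s-\varepsilon}\!\int K_{s-r}(y)\,e^{\lambda(r-s)}[\nabla\rho^\varepsilon_r-\nabla\rho_r](X_s+y)\,dy\,dr$. The cutoff tail has modulus at most $\varepsilon\,\|\nabla\rho\|_\infty\le C\varepsilon$ by Proposition \ref{higeuniest}. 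The density discrepancy is the delicate term: applying Lemma \ref{l5} naively brings in the singular factor $\varepsilon^{-(d+2)/2}$ against $\mathcal W_1(\rho^\varepsilon_r,\rho_r)$, which integrated up to $s-\varepsilon$ yields a $\varepsilon^{-d/2}$ blow-up that prevents Gronwall from closing.

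The key trick, borrowed from the uniqueness argument in the proof of Theorem \ref{existunifina}, is to control the density discrepancy using independent copies and two successive integrations by parts. Let $(\tilde{\bar X}^\varepsilon_r,\tilde X_r)$ be a copy of $(\bar X^\varepsilon_r,X_r)$, coupled by sharing an initial datum and a Brownian motion, and independent of $(\bar X^\varepsilon_s,X_s)$. Integration by parts in the spatial variable rewrites the density discrepancy as
\[
\tilde{\mathbb E}\!\left[\tfrac{\tilde{\bar X}^\varepsilon_r-X_s}{2(s-r)}\,K_{s-r}(\tilde{\bar X}^\varepsilon_r-X_s)\right]-\tilde{\mathbb E}\!\left[\tfrac{\tilde X_r-X_s}{2(s-r)}\,K_{s-r}(\tilde X_r-X_s)\right].
\]
Taking expectation over $X_s\sim\rho_s$, a change of variable and a second integration by parts, exactly as in the manipulation leading to \eqref{uniqueness13}, convert this into $-\int K_{s-r}(y)\bigl[\nabla\rho_s(\tilde{\bar X}^\varepsilon_r+y)-\nabla\rho_s(\tilde X_r+y)\bigr]\,dy$. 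Since $\int K_{s-r}=1$ and $\|D^2\rho\|_\infty\le C$ uniformly by Proposition \ref{higeuniest}, this is bounded by $C\,\mathbb E|\tilde{\bar X}^\varepsilon_r-\tilde X_r|=C\,\mathbb E|\bar X^\varepsilon_r-X_r|$, which is Gronwall-compatible and carries no singular $\varepsilon$-factor.

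Collecting the three contributions yields $\mathbb E\sup_{\tau\le t}|\bar X^\varepsilon_\tau-X_\tau|\le C_T\varepsilon+C_T\int_0^t\mathbb E\sup_{\tau\le s}|\bar X^\varepsilon_\tau-X_\tau|\,ds$, whereupon Gronwall's lemma delivers the claimed bound $C_Te^{C_TT}\varepsilon$. The main obstacle is precisely the density-discrepancy step: a direct Lipschitz estimate in the spirit of Lemma \ref{l5} does not suffice, and the essential technical ingredients are the independent-copy/integration-by-parts identity together with the uniform $C^{2,1}$ regularity furnished by Proposition \ref{higeuniest}.
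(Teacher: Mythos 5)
Your proposal is correct and follows essentially the same route as the paper: you split the drift difference into a spatial-Lipschitz part (controlled by the uniform $C^{2,1}$ bounds from Proposition \ref{higeuniest}), the cutoff-tail term of size $O(\varepsilon)$, and the density discrepancy, and you close with Gronwall. In particular you correctly recognize that the density discrepancy cannot be handled by the $\varepsilon$-singular Lipschitz estimate of Lemma \ref{l5} and instead must be recycled through the independent-copy integration-by-parts identity of \eqref{uniqueness13} to produce a singularity-free Gronwall term, which is exactly the paper's key step.
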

\begin{proof}
Subtracting one equation from the other one, one has
\begin{eqnarray}\label{distan23}\nonumber
&&\sup\limits_{\tau\in[0,t]}|\bar{{X}}^{\varepsilon}_\tau-{X}_\tau|\\ \nonumber
 &\leq& \big|\int_0^t\int_0^{s}\int_{\mathbb{R}^{d}} \frac{e^{-\frac{|y|^2}{4 (s-r)} +\lambda (r-s)  } }   {\big(4\pi(s-r)\big)^{\frac{d}{2}}}   (\nabla\rho^\varepsilon_r(\bar{X}^{\varepsilon}_s+y)- \nabla\rho^\varepsilon_r(X_s+y))dy drds\big|\\ \nonumber
&&+\big|\int_0^t\int_0^{s}\int_{\mathbb{R}^{d}} \frac{e^{-\frac{|y|^2}{4 (s-r)} +\lambda (r-s)  } }   {\big(4\pi(s-r)\big)^{\frac{d}{2}}}   (\nabla\rho^\varepsilon_r(X_s+y)- \nabla\rho_r(X_s+y))dy drds\big|
 \\ \nonumber
&&+\int_0^t\int_{s-\varepsilon}^{s}\int_{\mathbb{R}^{d}} \frac{e^{-\frac{|y|^2}{4 (s-r)}  } }   {\big(4\pi(s-r)\big)^{\frac{d}{2}}}   |\nabla\rho^\varepsilon_r(\bar{X}^{\varepsilon}_s+y)|dy drds\\ \nonumber
&&+\int_0^t\int_{\mathbb{R}^{d}}\frac{e^{-\frac{|y|^2}{4s} } }   {(4\pi s)^{\frac{d}{2}}}|\nabla c_0(y+{X}_s)-\nabla c_0(y+\bar{X}_s^{\varepsilon})|dyds\\ \nonumber
&\leq&  \|\nabla\rho^\varepsilon\|_{L^\infty(\overline{\Omega}_T)}\varepsilon t+\big(\|D^2c_0\|_{L^\infty(\mathbb{R}^{d})}+\|D^2\rho^\varepsilon\|_{L^\infty(\overline{\Omega}_T)}t\big) \int_0^t|\bar{{X}}^{\varepsilon}_s-{X}_s|ds\\
&&+\big|\int_0^t\int_0^{s}\int_{\mathbb{R}^{d}} \frac{e^{-\frac{|y|^2}{4 (s-r)} +\lambda (r-s)  } }   {\big(4\pi(s-r)\big)^{\frac{d}{2}}}   \big(\nabla\rho^\varepsilon_r(X_s+y)- \nabla\rho_r(X_s+y)\big)dy drds\big|.
\end{eqnarray}
Denote $I=:\big|\int_0^{s}\int_{\mathbb{R}^{d}} \frac{e^{-\frac{|y|^2}{4 (s-r)} +\lambda (r-s)  } }   {\big(4\pi(s-r)\big)^{\frac{d}{2}}}   \big(\nabla\rho^\varepsilon_r(X_s+y)- \nabla\rho_r(X_s+y)\big)dy dr\big|$. Taking expectation of \eqref{distan23}, one has
\begin{eqnarray}\label{distan231}
\mathbb{E}[\sup\limits_{\tau\in[0,t]}|\bar{{X}}^{\varepsilon}_\tau-{X}_\tau|] &\leq&    C_T\varepsilon+C_T\int_0^t\mathbb{E}[\sup\limits_{\tau\in[0,s]}|\bar{{X}}^{\varepsilon}_\tau-{X}_\tau|+I]ds.
\end{eqnarray}
Similarly with the computation of \eqref{uniqueness13}, one has
\begin{eqnarray}\label{distan232}
\mathbb{E}[I]\leq \|D^2\rho\|_{L^\infty(\overline{\Omega}_T)}\int_0^s\mathbb{E}[\sup\limits_{\tau\in[0,r]}|\bar{{X}}^{\varepsilon}_\tau-{X}_\tau|]dr.
\end{eqnarray}
Plugging \eqref{distan232} into \eqref{distan231}, one has
\begin{eqnarray}\label{distan233}
\mathbb{E}[\sup\limits_{\tau\in[0,t]}|\bar{{X}}^{\varepsilon}_\tau-{X}_\tau|] &\leq&    C_T\varepsilon+C_T\int_0^t\mathbb{E}[\sup\limits_{\tau\in[0,s]}|\bar{{X}}^{\varepsilon}_\tau-{X}_\tau|]ds.
\end{eqnarray}
By the Gronwall's inequality, one obtains that
\begin{eqnarray}\label{distan233}
\mathbb{E}[\sup\limits_{\tau\in[0,t]}|\bar{{X}}^{\varepsilon}_\tau-{X}_\tau|] \leq     C_Te^{C_T t}\varepsilon,
\end{eqnarray}
which finishes the proof.

\end{proof}

\section{Rigorous derivation of the propagation of chaos}
First, we recall the following standard equivalent notions of propagation of chaos from the lecture notes of Sznitman \cite[Proposition 2.2]{S1991}.
\begin{defn}\label{paropagationchaos}
Let $E$ be a polish space.   A sequence of symmetric probability measures  $f^N$ on $E^N$ are said to be $f$-chaotic, $f$ is a probability measure on $E$, if one of three following equivalent condition is satisfied:
\begin{enumerate}[(i)]
 \item The sequence of second marginals $f^{2,N}\rightharpoonup f\otimes f$
 as $N\rightarrow\infty$;
 \item For all $j\geq1$, the sequence of j-th marginals $f^{j,N}\rightharpoonup f^{\otimes j }$  as $N\rightarrow\infty$;
 \item The empirical measure $\frac{1}{N}\sum\limits_{i=1}^N\delta_{{{X}}^{i, N}}$ (${X}^{i, N}$, $i=1,\cdots, N$ are canonical coordinates on $E^N$)  converge  in law to the constant random variable $f$ as $N\rightarrow\infty$.
     \end{enumerate}
\end{defn}

Finally,  we finish the proof of Theorem \ref{meanfieldres} by proving $(ii)$ of notions of propagation of chaos.

{\bf Proof of Theorem \ref{meanfieldres}:}
By Theorem \ref{existunim},
  there exists a unique global strong solution $\bar{X}_t^{i,\varepsilon}$ to \eqref{sto1equhigh} with the initial data ${X}^{i} _0$ and Brownian motion ${B}^{i} _t$ for any $1\leq i\leq N$. Moreover $\{\bar{X}_t^{i,\varepsilon}\}_{i=1}^N$ are i.i.d. and their common density $\rho^\varepsilon$ is the unique weak solution to \eqref{intermeKSeq}.

By Theorem \ref{existunifina}, there exists a unique global strong solution $(X_t^{i}, \rho^i)$ to \eqref{stoKSfin} with the initial data ${X}^{i} _0$ and Brownian motion ${B}^{i} _t$ for any $1\leq i\leq N$. $\{{X}_t^{i}\}_{i=1}^N$ are i.i.d. and their common density $\rho$ is the unique weak solution to  \eqref{diffaggKSeq} by Corollary  \ref{relationsdepde} and Corollary  \ref{uniquepde}.


Combining Proposition \ref{P} and Proposition \ref{fincoup}, for any $T>0$ and $1\leq i\leq N$, one has
\begin{eqnarray}\label{coupmeth}\nonumber
\mathbb{E}[\sup\limits_{t\in[0,T]}|X_t^{i,\varepsilon}-{X}^i_t|] &\leq& \mathbb{E}[\sup\limits_{t\in[0,T]}|X_t^{i,\varepsilon}-\bar{X}_t^{i,\varepsilon}|]+\mathbb{E}[\sup\limits_{t\in[0,T]}|\bar{X}_t^{i,\varepsilon}-{X}^i_t|]\\
&\leq &   \frac{C}{{\sqrt {N}\varepsilon^{\frac{d}{2}}}}\exp\big({\frac{C}{\varepsilon^{\frac{d}{2}+1}}}\big)+ C_Te^{C_T t}\varepsilon.
\end{eqnarray}
Choosing $\varepsilon=\varepsilon(N)= \lambda(\ln N)^{-\frac{2}{d+2}}\rightarrow0$ as $N\rightarrow\infty$, where $\lambda$ is a large enough positive constant such that $C\lambda^{-\frac{d+2}{2}}<\frac{1}{2}$,  then
\begin{eqnarray}\label{strongvertion11}\nonumber
\mathbb{E}[\sup\limits_{t\in[0,T]}|X_t^{i,\varepsilon(N)}-{X}^i_t|] &\leq& C\lambda^{-\frac{d}{2}}N^{C\lambda^{-\frac{d+2}{2}}-\frac{1}{2}}(\ln N)^{\frac{d}{d+2}}+ C_Te^{C_T t}\varepsilon\\
&& \rightarrow0 \quad\quad \mbox{~as~} N\rightarrow\infty.
\end{eqnarray}

Denote by $\tilde{F}_t(x_1,\cdots,x_N,\hat{x}_1,\cdots,\hat{x}_j)$ the joint marginal  distribution of \\ $\big({X}^{1,\varepsilon}_t,\cdots,{X}^{N,\varepsilon}_t,{{X}}^{1}_t,\cdots,{{X}}^{j}_t\big)_{t\geq0}$, then one has the following facts
\begin{eqnarray*}
&&f_t^{\otimes j}=\int_{\mathbb{R}^{Nd}}\tilde{F}_t(dx_1,\cdots,dx_N,\cdot),\\
&&f_t^{(j),\varepsilon}=\int_{\mathbb{R}^{Nd}}\tilde{F}_t(\cdot,dx_{j+1},\cdots,dx_N,d\hat{x}_1,\cdots,d\hat{x}_j),\\ &&\int_{\mathbb{R}^{(N-j)d}}\tilde{F}_t(\cdot,dx_{j+1},\cdots,dx_N,\cdot)\in \Lambda(f_t^{(j),\varepsilon}, f_t^{\otimes j}),
\end{eqnarray*}
where $f$ is the common distribution of $\big\{({X}^{i}_t)_{t\geq0}\big\}^N_{i=1}$.

By the exchangeability of $\{({X}^{i, \varepsilon}_t-{X}^{i}_t)_{t\geq0}\}_{i=1}^N$ ( similar to \eqref{exchange}), we obtain
\begin{eqnarray}\label{propagation} \nonumber
&&\mathcal {M}_T(f_t^{(j),\varepsilon}, f_t^{\otimes j})\\ \nonumber
&\leq& \nonumber \sup\limits_{t\in[0,T]}\int_{\mathbb{R}^{2jd}}\big(|x_1-\hat{x}_1|+\cdots+|x_j-\hat{x}_j|\big)\int_{\mathbb{R}^{(N-j)d}}\tilde{F}_t(dx_1,\cdots,dx_N,d\hat{x}_1,\cdots,d\hat{x}_j)\\ \nonumber
&=&j\sup\limits_{t\in[0,T]}\int_{\mathbb{R}^{(N+j)d}}|x_1-\hat{x}_1|\tilde{F}_t(dx_1,\cdots,dx_N,d\hat{x}_1,,\cdots,d\hat{x}_j)\\
&\leq&j \mathbb{E}_{x_1,\cdots,x_N,\hat{x}_1,\cdots,\hat{x}_j}\big[\sup\limits_{t\in[0,T]}|{X}^{1,\varepsilon}_t-{X}^{1}_t|\big].
\end{eqnarray}
Combining \eqref{strongvertion11} and  \eqref{propagation},
 one knows that $f_t^{(j),\varepsilon}$ narrowly converges to $f_t^{ \otimes j}$ by Proposition \ref{property},
 which means that \eqref{mean} holds true. \quad $\square$

\section{Appendix}
{\bf Proof of  Lemma \ref{energeyest}:}
\begin{proof}
First, multiplying  the first equation of \eqref{intermeKSeq} with $r\rho_\varepsilon ^{r-1}$, $r\geq 2 $ and integrating in $\mathbb{R}^d$, one has
\begin{eqnarray}\label{diK}
\frac{d}{dt}\int_{\mathbb{R}^d}\rho_\varepsilon ^rdx+\frac{4(r-1) }{r}\int_{\mathbb{R}^d}\big|\nabla (\rho_\varepsilon^{\frac{r}{2}})\big|^2dx
= (r-1)\int_{\mathbb{R}^d}\nabla \rho_\varepsilon^r \cdot\nabla c_\varepsilon dx.
\end{eqnarray}
Therefore, we just need to estimate the right hand of \eqref{diK}.
\begin{eqnarray}
\int_{\mathbb{R}^d}\nabla \rho_\varepsilon^r \cdot\nabla c_\varepsilon dx\leq\int_{\mathbb{R}^d}| \rho_\varepsilon^r \triangle c_\varepsilon |dx\leq \|\rho_\varepsilon(t)\|^r_{L^{r+1}(\mathbb{R}^d)}\|\triangle c_\varepsilon(t)\|_{L^{r+1}(\mathbb{R}^d)}.
\end{eqnarray}
Using Young's inequality and \eqref{estpara3}, one has
\begin{eqnarray}\nonumber\label{lowdest}
\int_0^t\int_{\mathbb{R}^d}\nabla \rho_\varepsilon^r \cdot\nabla c_\varepsilon dxdt&\leq& \frac{r}{r+1}\int_0^t\|\rho_\varepsilon(s)\|^{r+1}_{L^{r+1}(\mathbb{R}^d)}ds+\frac{1}{r+1} \int_0^t \|\triangle c_\varepsilon(s)\|^{r+1}_{L^{r+1}(\mathbb{R}^d)}ds\\
&\leq& C_{r,d}\big( \int_0^t\|\rho_\varepsilon(s)\|^{r+1}_{L^{r+1}(\mathbb{R}^d)}ds+t\|c_0 \|_{W^{2,{r+1}}(\mathbb{R}^d)}\big)
\end{eqnarray}
 where $C_{r,d}$ is a positive constant depending on $r$ and $d$.

Integrating \eqref{diK} in time interval and combining \eqref{lowdest}, we have
\begin{eqnarray}\label{senconddiK}\nonumber
\|\rho_\varepsilon\|^r_{L^r(\mathbb{R}^d)}+\frac{4(r-1) }{r}\int_0^t\| \nabla (\rho_\varepsilon^{\frac{r}{2}})\|^2_{L^2(\mathbb{R}^d)}ds
&\leq& \|\rho_0\|^r_{L^r(\mathbb{R}^d)}+C_{r,d}\big(t\|c_0 \|_{W^{2,r+1}(\mathbb{R}^d)}\\
&&+\int_0^t\|\rho_\varepsilon(s)\|^{r+1}_{L^{r+1}(\mathbb{R}^d)} ds\big).
\end{eqnarray}
To estimate the last term of \eqref{senconddiK}, taking $q_2=r+1, q_1=1, m=1$  in Lemma \ref{GNinequaliy}, one knows that
\begin{eqnarray}\label{estofGN}
\|\rho_\varepsilon\|^{r+1}_{L^{r+1}(\mathbb{R}^d)}\leq C_{r,d}\|\rho_\varepsilon\|^{(1-\theta)(r+1)}_{L^{1}(\mathbb{R}^d)}\| \nabla (\rho_\varepsilon^{\frac{r}{2}})\|^{\frac{2\theta(r+1)}{r}}_{L^2(\mathbb{R}^d)},
\end{eqnarray}
where
\begin{eqnarray}\nonumber
\theta=
\left\{\begin{array}{ll}
\frac{r^2}{(r+1)^2}, & d=1, \\
\frac{r}{r+1}, & d=2 ,\\
\frac{r^2}{(r+1)\big(\frac{2}{d}-1+r\big) },  &d\geq 3, r\geq \frac{d}{2}-1.
\end{array}\right.
\end{eqnarray}

Next, we split into three cases to deal with the inequality \eqref{senconddiK}.

{\bf Case1, $d=1$ :}  Plugging \eqref{estofGN} into \eqref{senconddiK} for $d=1$, one has
\begin{eqnarray}\label{senconddiKfor1d}\nonumber
\|\rho_\varepsilon\|^r_{L^r(\mathbb{R})}+\frac{4(r-1) }{r}\int_0^t\| \nabla (\rho_\varepsilon^{\frac{r}{2}})\|^2_{L^2(\mathbb{R})}ds
&\leq& \|\rho_0\|^r_{L^r(\mathbb{R})}+C_{r,1}\big(t\|c_0 \|_{W^{2,r+1}(\mathbb{R})}\\
&&+\int_0^t\|\rho_\varepsilon\|^{\frac{2r+1}{r+1}}_{L^{1}(\mathbb{R})} \|\nabla (\rho_\varepsilon^{\frac{r}{2}})\|^{\frac{2r}{r+1}}_{L^{2}(\mathbb{R})}ds\big).
\end{eqnarray}
Since $\frac{2r}{r+1}<2$, by Young's inequality,
\begin{eqnarray}\label{senconddiKfor1d11}\nonumber
\|\rho_\varepsilon\|^r_{L^r(\mathbb{R})}+\frac{4(r-1) }{r}\int_0^t\| \nabla (\rho_\varepsilon^{\frac{r}{2}})\|^2_{L^2(\mathbb{R})}ds
&\leq& \|\rho_0\|^r_{L^r(\mathbb{R})}+C_{r,1}\big(t\|c_0 \|_{W^{2,r+1}(\mathbb{R})}+\int_0^t\|\rho_\varepsilon\|^{2r+1}_{L^{1}(\mathbb{R})}ds\big)\\
&&+\frac{2(r-1) }{r}\int_0^t\|\nabla (\rho_\varepsilon^{\frac{r}{2}})\|^{2}_{L^{2}(\mathbb{R})}ds.
\end{eqnarray}
Combining the mass conservation of the system, i.e. $\| \rho_\varepsilon(\cdot,t) \|_{L^1} =\| \rho_0(\cdot) \|_{L^1} = M_0$, we get the following uniform estimate for $d=1$:
\begin{eqnarray}\label{uniformford1}
\|\rho_\varepsilon\|^r_{L^r(\mathbb{R})}+\frac{2(r-1) }{r}\int_0^t\| \nabla (\rho_\varepsilon^{\frac{r}{2}})\|^2_{L^2(\mathbb{R})}ds
\leq \|\rho_0\|^r_{L^r(\mathbb{R})}+C_{r,1}t\big(\|c_0 \|_{W^{2,r+1}(\mathbb{R})}+ M_0^{2r+1}\big).
\end{eqnarray}

{\bf Case2, $d=2$ :}  Plugging \eqref{estofGN} into \eqref{senconddiK} for $d=2$, one has
\begin{eqnarray}\label{uniformford2}\nonumber
\|\rho_\varepsilon\|^r_{L^r(\mathbb{R}^2)}+\frac{4(r-1) }{r}\int_0^t\| \nabla (\rho_\varepsilon^{\frac{r}{2}})\|^2_{L^2(\mathbb{R}^2)}ds
&\leq& \|\rho_0\|^r_{L^r(\mathbb{R}^2)}+C_{r,2}\big(t\|c_0 \|_{W^{2,r+1}(\mathbb{R}^2)}\\
&&+ M_0 \int_0^t\| \nabla (\rho_\varepsilon^{\frac{r}{2}})\|^2_{L^2(\mathbb{R}^2)}ds\big).
\end{eqnarray}
Therefore, for small $M_0$, we can get a uniform estimate.

{\bf Case3, $d\geq 3$ :} Plugging \eqref{estofGN} into \eqref{senconddiK} for $d\geq 3$, one has
\begin{eqnarray}\label{senconddiKhighd}\nonumber
\|\rho_\varepsilon\|^r_{L^r(\mathbb{R}^d)}+\frac{4(r-1) }{r}\int_0^t\| \nabla (\rho_\varepsilon^{\frac{r}{2}})\|^2_{L^2(\mathbb{R}^d)}ds
&\leq& \|\rho_0\|^r_{L^r(\mathbb{R}^d)}+C_{r,d}\big(t\|c_0 \|_{W^{2,r+1}(\mathbb{R}^d)}\\
&&+M_0^{(1-\theta)(r+1)}  \int_0^t\| \nabla (\rho_\varepsilon^{\frac{r}{2}})\|^{\frac{2\theta(r+1)}{r}}_{L^2(\mathbb{R}^d)} ds\big).
\end{eqnarray}
One can easily check that $\frac{2\theta(r+1)}{r}>2$, so it fails to get a uniform estimate by the same method with $d=1, 2$. We will use the method applied by \cite{CP2006} to get the uniform estimate. We come back to deal with the right hand of \eqref{diK}, i.e.
\begin{eqnarray}\label{diKrighthand}
\int_{\mathbb{R}^d}\nabla \rho_\varepsilon^r \cdot\nabla c_\varepsilon dx= 2\int_{\mathbb{R}^d} \rho_\varepsilon^{\frac{r}{2}} \nabla \rho_\varepsilon^{\frac{r}{2}} \cdot\nabla c_\varepsilon dx.
\end{eqnarray}
Using H\"{o}lder's inequality, Sobolev injection and \eqref{estpara2}, one has
\begin{eqnarray}\label{righthandest}
\int_{\mathbb{R}^d} |\rho_\varepsilon^{\frac{r}{2}} \nabla \rho_\varepsilon^{\frac{r}{2}} \cdot\nabla c_\varepsilon| dx&\leq&
\|\rho_\varepsilon^{\frac{r}{2}}\|_{L^{\frac{2d}{d-2}}(\mathbb{R}^d)} \|\nabla \rho_\varepsilon^{\frac{r}{2}}\|_{L^2(\mathbb{R}^d)}
\|\nabla c_\varepsilon\|_{L^d(\mathbb{R}^d)} \\ \nonumber
&\leq& C_d \|\nabla \rho_\varepsilon^{\frac{r}{2}}\|^2_{L^2(\mathbb{R}^d)}\big(\|\nabla c_0\|_{L^d(\mathbb{R}^d)} +C_{d,q} \sup\limits_{0\leq s\leq t} \|\rho_\varepsilon(\cdot,s)\|_{L^{q}(\mathbb{R}^d)}\big),
\end{eqnarray}
where $\frac{d}{2}< q \leq d$.

Combining \eqref{diK}, \eqref{diKrighthand} and \eqref{righthandest}, one obtains that
\begin{eqnarray}\label{disestforhighd11}
\frac{d}{dt}\|\rho_\varepsilon\|^r_{L^{r}(\mathbb{R}^d)}\leq 2(r-1)\|\nabla \rho_\varepsilon^{\frac{r}{2}}\|^2_{L^2(\mathbb{R}^d)}\big(-\frac{2}{r}+C_d\|\nabla c_0\|_{L^d(\mathbb{R}^d)} +C_{d,q} \sup\limits_{0\leq s\leq t} \|\rho_\varepsilon(\cdot,s)\|_{L^{q}(\mathbb{R}^d)}\big),
\end{eqnarray}
which finishes the proof.
\end{proof}

{\bf Proof of  Proposition \ref{uniest}:}

\begin{proof}
Taking $r=2$ in \eqref{disestfor1d} and \eqref{disestfor2d} respectively, one obtains \eqref{domainest} for $d=1, 2$,  immediately.

When $d\geq3$, we chose $r=q=a$, $a\in(\frac{d}{2}, d]$ in \eqref{disestforhighd}. If the initial data $\|\nabla c_0\|_{L^d(\mathbb{R}^d)}+\|\rho_0\|_{L^{a}(\mathbb{R}^d)}$ is small enough compared to $\frac{1}{a}$, then the right hand of \eqref{disestforhighd} is negative and thus $\|\rho_\varepsilon\|^a_{L^{a}(\mathbb{R}^d)}$ decreases and this remains true for all times, i.e. we have
\begin{eqnarray}\label{aenergyest}
\sup\limits_{0\leq s\leq T} \|\rho_\varepsilon(s)\|_{L^{a}(\mathbb{R}^d)}\leq \|\rho_0\|_{L^{a}(\mathbb{R}^d)}.
\end{eqnarray}

Then choosing $r=2$, $q=a$, $a\in(\frac{d}{2}, d]$ in \eqref{disestforhighd} and integrating in time interval, one has
\begin{eqnarray}\label{2energyest}
\|\rho_\varepsilon\|^2_{L^{2}(\mathbb{R}^d)}+ 2\big(1-C_d\|\nabla c_0\|_{L^d(\mathbb{R}^d)} -C_{d,a}  \|\rho_0\|_{L^{a}(\mathbb{R}^d)}\big)\int_0^T\|\nabla \rho_\varepsilon\|^2_{L^2(\mathbb{R}^d)}ds\leq \|\rho_0\|^2_{L^{2}(\mathbb{R}^d)},
\end{eqnarray}
i.e. \eqref{domainest} also holds true for high dimensions as long as $C_d\|\nabla c_0\|_{L^d(\mathbb{R}^d)} +C_{d,a}  \|\rho_0\|_{L^{a}(\mathbb{R}^d)}< 1$.

The uniform estimates of $\|\rho_\varepsilon\|_{L^\infty(0,T; L^\infty(\mathbb{R}^{d}))}$ is similar with the parabolic-elliptic KS equation, (see \cite[Theorem 4.2.]{BLZ2014} and \cite[Theorem 2.2.]{LY2019}).  We omit the process here.

Now, we estimate the time derivative. Multiplying  the first equation of \eqref{intermeKSeq} with a test function $\varphi(x)\in {C}_0^\infty(\mathbb{R}^d)$, then integrating in space, one has
\begin{eqnarray}\label{diKfortime}
\int_{\mathbb{R}^d}\partial_t\rho_\varepsilon \varphi dx
=-\int_{\mathbb{R}^d}\big(\nabla \rho^\varepsilon  - \rho^\varepsilon\nabla c^\varepsilon\big)\cdot\nabla\varphi dx.
\end{eqnarray}
Then
\begin{eqnarray}\label{diKfortimed1}
\Big|\int_{\mathbb{R}^d}\partial_t\rho_\varepsilon\varphi dx\Big|
\leq \|\nabla\varphi\|_{L^2({\mathbb{R}^d})} \big(\|\nabla\rho_\varepsilon\|_{L^2({\mathbb{R}^d})}+\|\rho_\varepsilon\|_{L^2({\mathbb{R}^d})} \|\nabla c_\varepsilon\|_{L^\infty({\mathbb{R}^d})}\big).
\end{eqnarray}
By \eqref{estpara2} and \eqref{Linfinity}, one has
\begin{eqnarray}\label{diKfortimed11}
\|\nabla c_\varepsilon(t)\|_{L^\infty({\mathbb{R}^d})}\leq \|\nabla c_0\|_{L^\infty({\mathbb{R}^d})}+ C\|\rho_\varepsilon \|_{L^\infty(0,T; L^{\infty}(\mathbb{R}^d))}\leq C.
\end{eqnarray}
Plugging \eqref{diKfortimed11} into \eqref{diKfortimed1},  one has
\begin{eqnarray}\label{diKfortimed12}
\frac{|\int_{\mathbb{R}^d}\partial_t\rho_\varepsilon\varphi dx|}{\|\nabla\varphi\|_{L^2({\mathbb{R}^d})}}
\leq \|\nabla\rho_\varepsilon\|_{L^2({\mathbb{R}^d})}+C\|\rho_\varepsilon\|_{L^2({\mathbb{R}^d})}.
\end{eqnarray}
Combining \eqref{domainest} and \eqref{diKfortimed12}, one obtains \eqref{timeest}.

Finally, we prove the uniform estimate of first moment of ${\rho}_\varepsilon$.
Recalling that ${\rho}_\varepsilon(t,x)$ is the common time marginal density of i.i.d. processes $\big\{({\bar{X}}^{i,\varepsilon}_t)_{t\geq0}\big\}^N_{i=1}$  and  $\big\{({\bar{X}}^{i,\varepsilon}_t)_{t\geq0}\big\}^N_{i=1}$ satisfy equation \eqref{sto1equ}. Then taking expectation of \eqref{sto1equ}, one obtains that for any $0\leq t\leq T$,
\begin{eqnarray}\label{momentest}
\int_{\mathbb{R}^d}|x|{\rho}_\varepsilon(t,x) dx=\mathbb{E}[|{\bar{X}}^{i,\varepsilon}_t|]\leq\mathbb{E}[|X_0|]+\int_0^t\int_{\mathbb{R}^d}|\nabla c_\varepsilon(s,x)|{\rho}_\varepsilon(s,x)dxds+\sqrt{2t}.
\end{eqnarray}
Using \eqref{Linfinity} and \eqref{diKfortimed11}, the second term of the right hand of \eqref{momentest} is controlled by
\begin{eqnarray}\label{sectermest}
\int_0^T\int_{\mathbb{R}^d}|\nabla c_\varepsilon|{\rho}_\varepsilon dxds
\leq
\|\nabla c_\varepsilon\|_{L^\infty(0,T; L^\infty(\mathbb{R}^d))} \|{\rho}_\varepsilon\|_{L^1(0,T; L^1(\mathbb{R}^d))}\leq CTM_0.
\end{eqnarray}
Plugging \eqref{sectermest} into \eqref{momentest}, one obtains \eqref{1moment} immediately.
\end{proof}

{\bf Proof of  Theorem \ref{conthe}:}
\begin{proof}
{\bf Step1:} We give the convergence of $\rho_\varepsilon$.
Based on the uniform estimates in Proposition \ref{uniest},  there exists a constant C independent of $\varepsilon$ such that
    $$\sup\limits_{0\leq t\leq T} \|\rho_\varepsilon\|^2_{L^2(\mathbb{R}^d)}\leq C, \quad \int_0^T\|\nabla\rho_\varepsilon\|_{ L^2(\mathbb{R}^{d})}^2\,dt\leq C, \quad \int_0^T\|\partial_t\rho_\varepsilon\|_{H^{-1}(\mathbb{R}^{d})}^2\,dt\leq C,
     $$
then there exists a subsequence $\rho_\varepsilon$ without relabeling such that
\eqref{comcon1} and \eqref{comcon2} hold.  Notice that the following compact embedding holds: for any ball $B_R$,
$$
H^1(B_R)\hookrightarrow\hookrightarrow L^2(B_R)\hookrightarrow H^{-1}(B_R).
$$

By the Lions-Aubin lemma \cite{CJL2014,L1982} and combining with the regularities, one arrives at
$$
\rho_\varepsilon \mbox{ is compact in } L^2(0,T; L^2\big(B_R)\big).
$$
Consequently, there exists a subsequence $\rho_\varepsilon$ without relabeling such that
\eqref{comcon3} holds. Furthermore, by the uniform estimates in Proposition \ref{uniest}, the regularity \eqref{rhoregularity} of $\rho$ follows.

{\bf Step2:} We give the convergence of $c_\varepsilon$.  Define
\begin{eqnarray}
c=e^{-\lambda t} e^{t\Delta} c_0+ \int_0^{t} e^{\lambda(s-t)} e^{(t-s)\Delta}\rho ds.
\end{eqnarray}
For any ball $B_R$, by the Bochner theorem in \cite [pp. 650]{E2002} and the Young's inequality for the convolution  \cite[pp. 99]{LL2001}, one has
\begin{eqnarray}\label{cconvergence}\nonumber
&&\|c_\varepsilon(t)-c(t)\|_{L^2(B_R)}\\
\nonumber&\leq& \|\int_0^{t} e^{\lambda(s-t)} e^{(t-s)\Delta}(\rho^\varepsilon-\rho) ds\|_{L^2(B_R)}+\|\int_{t-\varepsilon}^{t} e^{\lambda(s-t)} e^{(t-s)\Delta}\rho^\varepsilon ds\|_{L^2(B_R)}\\ \nonumber
&\leq&C\int_0^{t} e^{\lambda(s-t)} \|\rho^\varepsilon(\cdot,s)-\rho(\cdot,s)\|_{L^2(B_R)} ds+C\|\rho^\varepsilon\|_{L^\infty(0,T;L^2(B_R))}\int_{t-\varepsilon}^{t} e^{\lambda(s-t)} ds\\
&\leq&C\sqrt{t} \|\rho^\varepsilon-\rho\|_{L^2(0,T;L^2(B_R))}+C\|\rho^\varepsilon\|_{L^\infty(0,T;L^2(\mathbb{R}^d))}\varepsilon.
\end{eqnarray}
Therefore, one gets the convergence of \eqref{comcon4}.

{\bf Step3:}
Now, we prove that $(\rho, c)$ is exactly a weak solution to \eqref{diffaggKSeq}. For any test function $\varphi(x)\in C_0^\infty(\mathbb{R}^{d})$, one has the following equation
\begin{eqnarray}\label{regularweakform}\nonumber
&&\int_{\mathbb{R}^d}\rho_\varepsilon(t,\cdot)\varphi(x)dx-\int_{\mathbb{R}^d}\rho_0\varphi(x)dx-\int_0^t\int_{\mathbb{R}^d}  \rho_\varepsilon(t,x)\Delta  \varphi dxds\\
&=&\int_0^t\int_{\mathbb{R}^d}\rho_\varepsilon(t,x) \nabla c_\varepsilon(t,x)\cdot\nabla\varphi(x)dxds.
\end{eqnarray}
By the weak convergence of $\rho_{\varepsilon}(t,x)$, the linear parts converge as follows
\begin{eqnarray}\label{comconver2}
\int_{\mathbb{R}^d}\varphi(x)\rho_{\varepsilon}(t,x)dx\rightarrow \int_{\mathbb{R}^d}\varphi(x)\rho(t,x)dx, \mbox{~as~} \varepsilon\rightarrow 0.
\end{eqnarray}
\begin{eqnarray}\label{comconver3}
\int_0^t\int_{\mathbb{R}^d} \rho_\varepsilon(t,x) \Delta  \varphi dxds \rightarrow \int_0^t\int_{\mathbb{R}^d} \rho(t,x)\Delta \varphi dxds, \mbox{~as~} \varepsilon\rightarrow 0.
\end{eqnarray}
The nonlinear part is divided as follows
\begin{eqnarray}\label{comconver4}
&&\big|\int_0^t\int_{\mathbb{R}^d}\big[\rho_\varepsilon(t,x)\nabla c_\varepsilon -\rho(t,x)\nabla c\big]\cdot\nabla\varphi(x)dxds\big|\\ \nonumber
&\leq& \big|\int_0^t\int_{\mathbb{R}^d}\big[\rho_\varepsilon(t,x)-\rho(t,x)\big]\nabla c_\varepsilon\cdot\nabla\varphi(x)dxds\big| +\big|\int_0^t\int_{\mathbb{R}^d}\rho\big[\nabla c_\varepsilon-\nabla c \big]\cdot\nabla\varphi(x)dxds\big|
\end{eqnarray}
From \eqref{Linfiniforc},  the first term of \eqref{comconver4} can be estimated by
\begin{eqnarray}\label{comconver5}
 \big|\int_0^t\int_{\mathbb{R}^d}\big[\rho_\varepsilon(t,x)-\rho(t,x)\big]\nabla c_\varepsilon\cdot\nabla\varphi(x)dxds\big|\leq C \| \rho_\varepsilon-\rho\|_{L^2(0,T; L^2(supp\varphi ))}
\end{eqnarray}
where $C$ is independent of $\varepsilon$. Hence the first term goes to zero by \eqref{comcon3} as $\varepsilon$ goes to zero. Furthermore,  one can easily check that the second term of \eqref{comconver4}  goes to zero by \eqref{comcon4}, i.e.
\begin{eqnarray}\label{comconver6}
\big|\int_0^t\int_{\mathbb{R}^d}\rho\big[\nabla c_\varepsilon-\nabla c \big]\cdot\nabla\varphi dxds\big|\leq C \| \nabla c_\varepsilon-\nabla c\|_{L^2(0,T; L^2(supp\varphi ))} \rightarrow 0 \mbox{~as~} \varepsilon\rightarrow 0.
\end{eqnarray}
Combining \eqref{comconver4}, \eqref{comconver5} and \eqref{comconver6} together, the nonlinear part converges too. So we prove that $(\rho, c)$ is a weak solution to \eqref{diffaggKSeq}.
\end{proof}

{\bf Acknowledgments}
The research of R. Yang was partially supported by National Natural Science Foundation of China  No. 11601021. The research of S. Wang is supported by the NSFC (11831003, 11771031, 11531010) of China and NSF of Qinghai Province (2017-ZJ-908).

{\small
}


\begin{thebibliography}{}
\bibitem{BLZ2014}
S.~Bian, J.-G.~Liu, and C.~Zou,
Ultra-contractivity for KS equations with diffusion exponent $m>1-2/d$,
Kinet.~Relat.~Mod., {\bf 7} (2014), 9-28.

\bibitem{BCC2}
F.~Bolley, J.~A.~Ca\~{n}izo and J.~A.~Carrillo, Stochastic mean-field limit: non-lipschitz forces and swarming,
Math.~Models Methods Appl.~Sci., {\bf 21} (2011), 2179-2210.

\bibitem{B1983}
H.~Brezis, Analyse fonctionnelle. Collection Math\'{e}matiques Appliqu\'{e}es pour la Ma\^{i}trise. [Collection of Applied
Mathematics for the Master's Degree], Masson, Paris, 1983.


\bibitem{CMS2014}
K.~Carrapatoso and S.~Mischler,  Uniqueness and long time asymptotics for the parabolic-parabolic keller-segel equation, Archive for Rational Mechanics and Analysis, {\bf 42}(2), (2014), 291-345.

\bibitem{CLM2014}
 J.~A.~Carrillo, S.~Lisini and E.~Mainini,  Uniqueness for Keller-Segel-type chemotaxis models, Discrete Contin.~Dyn.~Syst.~34, {\bf 4}, (2014), 1319-1338.

\bibitem{CJL2014}
X. Chen, A. J\"ungel, J.-G.~Liu, A note on Aubin-Lions-Dubinski lemma, Acta Appl.~Math., {\bf 133}, (2014), 33-43.

\bibitem{CEM2014}
L.~Corrias, M.~Escobedo and J.~Matos, Existence, uniqueness and asymptotic behavior of the solutions to the fully
parabolic Keller-Segel system in the plane, J.~Differential Equations, {\bf 257}, 6 (2014), 1840-1878.


\bibitem{CP2006}
L.~Corrias and B.~Perthame, Critical space for the parabolic-parabolic Keller-Segel model in $\mathbb{R}^d)$, C.~R.~Acad.~ Sci.~Paris, Ser. ~I, 342, (2006), 745-750.

\bibitem{EO2007}
R.~Erban and H.~G.~Othmer,  Taxis equations for amoeboid cells, J.~Math.~Biol, {\bf 54}, 6 (2007), 847-885.

\bibitem{E2002}
 L.~C.~Evans, Partial Differential Equations, Graduate Studies in Mathematics, vol. 19. AMS, Providence (2002).


\bibitem{JTT2017}
J.~F.~Jabir, D.~Talay and  M.~Tomasevic, Mean--field limit of a particle approximation of the one-dimensional parabolic--parabolic Keller-Segel model without smoothing, Electron.~Commun.~Prob, {\bf 23}, (2017).

\bibitem{JLW2020}
A. Jungel, O. Leingang, S. Wang, Vanishing cross-diffusion limit in a Keller¨CSegel system with
additional cross-diffusion, Nonlinear Analysis, 192(2020), 111698:1-21.

\bibitem{IY2012}
S.~Ishida and T.~Yokota,
Global existence of weak solutions to quasilinear degenerate
Keller-Segel systems of parabolic-parabolic type, J.~Differential Equations, {\bf 252} (2012), 1421-1440.

\bibitem{LSU1968}
 O.~A.~Lady\v{z}enskaja, V.~A.~Solonnikov and  N.~N.~Ural'ceva, Linear and Quasilinear Equations of Parabolic Type, Amer.~Math.~Soc., Providence, RI, (1968).

\bibitem{LL2001}
E.~H.~Lieb and M.~Loss, Analysis. Graduate Studies in Mathematics, vol. 14, 2nd edn. American Mathematical Society,
Providence (2001).

\bibitem{L1982}
P.-L.~Lions, Sym\'{e}trie et compacit\'{e} dans les espaces de sobolev. J.~Funct.~Anal. {\bf 49} (1982), 315-334.

\bibitem{LY2019}
J.-G.~Liu and R.~Yang, Propagation of chaos for the Keller-Segel equation with a logarithmic cut-off,  preprint, (2019)

\bibitem{OH2002}
H.~G.~Othmer and T.~Hillen,  The diffusion limit of transport equations.~II.~Chemotaxis equations, SIAM
J.~Appl.~Math. {\bf 62}, 4 (2002), 1222-1250.

\bibitem{P2004}
 B.~Perthame, PDE models for chemotactic movements: parabolic, hyperbolic and kinetic, Appl.~Math. {\bf 49}, 6 (2004),
539-564.

\bibitem{SV1979}
D.~W.~Stroock and S.~R.~S.~Varadhan, Multidimensional Diffusion Processes, Springer, New York, 1979.

\bibitem{S2006}
Y.~Sugiyama, Global existence in sub-critical cases and finite time blow-up in super-critical cases to degenerate Keller-Segel systems. Differ.~Integral Equ., {\bf 19} (2006), 841-876.

\bibitem{SK2006}
 Y.~Sugiyama and H.~Kunii, Global existence and decay properties for a degenerate Keller-Segel model with a power factor in
drift term, J.~Differential Equations, {\bf 227} (2006), 333-364.


\bibitem{S1991}
A.-S.~Sznitman,
Topics in propagation of chaos,
In Ecole d'Et\'{e} de Probabilit\'{e}s de Saint-Flour XIX-1989,
Lecture Notes in Math. 1464. Springer, Berlin, (1991).

\bibitem{V}
C.~Villani,
Optimal Transport, Old and New, Grundlehren Math. Wiss. 338, Springer, Berlin, (2009).

\bibitem{WLC2019}
J.~Wang, Y.~Li and L.~Chen,
Supercritical degenerate parabolic-parabolic Keller-Segel system: existence criterion given by the best constant in Sobolev's inequality, Z.~Angew.~Math.~Phys. (2019) 70:71.

\end{thebibliography}
\end{document}